\documentclass[11 pt,reqno]{amsart}

\usepackage{amsmath,amsthm,amssymb,amscd,graphicx}
\usepackage{bbm, dsfont}
\usepackage{color}
\usepackage{hyperref}
\usepackage{enumerate}
\usepackage{mathtools}
\usepackage{mathrsfs}
\usepackage{multirow}

\evensidemargin 0.0in \oddsidemargin 0.0in \textwidth 6.5in
\topmargin  -0.2in \textheight  9.0in \overfullrule = 0pt

\newtheorem*{acknowledgements}{Acknowledgements}
\newtheorem{thm}{Theorem}[section]
 \newtheorem{cor}[thm]{Corollary}
 \newtheorem{lem}[thm]{Lemma}
 
 \newtheorem{prop}[thm]{Proposition}
 
 \theoremstyle{definition}
 
 \theoremstyle{remark}
 \newtheorem{rem}[thm]{Remark}
 
 \numberwithin{equation}{section}

\def\be#1 {\begin{equation} \label{#1}}
\newcommand{\ee}{\end{equation}}
\renewcommand{\phi}{\varphi}

\def\C{\mathbb C}
\def\R{\mathbb R}
\def\T{\mathbb T}

\def\HH{\mathbb H}
\def\N{\mathbb N}
\def\E{\mathcal E}

\def\pa{\partial}

\def\dis{\displaystyle}    

\definecolor{gr}{rgb}   {0.,   0.69,   0.23 }
\definecolor{bl}{rgb}   {0.,   0.5,   1. }
\definecolor{mg}{rgb}   {0.85,  0.,    0.85}
\definecolor{yl}{rgb}   {0.8,  0.7,   0.}
\definecolor{or}{rgb}  {0.7,0.2,0.2}
\newcommand{\Bk}{\color{black}}

\newcommand{\wt}{\widetilde}

\catcode`\ = \active\def {\'e} \catcode`\ = \active\def {\'E}  \catcode`\Ë = \active\def Ë{\`A}\catcode`\ = \active\def {\`e}
\catcode`\ = \active\def {\c{c}} \catcode`\ = \active\def
{\`a} \catcode`\ = \active \def {\^e} \catcode`\ = \active
\def {\`u} \catcode`\ = \active \def {\^o} \catcode`\ =
\active \def {\^u} \catcode`\ = \active \def {\^{\i}}
\catcode`\ = \active \def {\^a} \catcode`\ = \active \def
{{\"o}}

\renewcommand{\Re}{  {\mathfrak{Re}}  }
 \renewcommand{\Im}{   {\mathfrak{Im}} }
\newcommand{\ov}{  \overline  }

\newcommand\<{\langle}
\renewcommand\>{\rangle}

\begin{document}

\thanks{V. Schwinte is  supported by the grant   "ISDEEC'' ANR-16-CE40-0013}
 
\thanks{L. Thomann is supported by the grants   "BEKAM''  ANR-15-CE40-0001 and  "ISDEEC'' ANR-16-CE40-0013}

\author{Valentin Schwinte}
\address{Universit de Lorraine, Mines Nancy, F-54000 Nancy, France}
\email{valentin.schwinte1@etu.univ-lorraine.fr}
\author{ Laurent Thomann }
\address{Universit de Lorraine, CNRS, IECL, F-54000 Nancy, France}

\email{laurent.thomann@univ-lorraine.fr}

\title{Growth of Sobolev norms for coupled Lowest Landau Level equations}

\subjclass[2000]{35Q55 ; 37K05 ; 35C07 ; 35B08}    

\keywords{Nonlinear Schr\"odinger equation, Lowest Landau Level, stationary solutions, progressive waves, solitons, growth of Sobolev norms.}

\begin{abstract}
We study coupled systems of nonlinear lowest Landau level  equations, for which we prove global existence results with polynomial bounds on the possible growth of Sobolev norms of the solutions. We also exhibit explicit unbounded trajectories which show that these bounds are optimal. \end{abstract}

\maketitle

%

\section{Introduction and main results}
 
 \subsection{General introduction}

 A Bose-Einstein condensate is a state of matter obtained by decreasing the temperature of a gas made of bosons without potential energy, which are all identical, with a spin that equals $0$, and which are non relativistic. If the  temperature is below   a critical temperature (named Bose-Temperature), almost all the bosons are in the lowest quantum state, and this  phenomenon is called Bose-Einstein condensation. 
 
Consider a Bose-Einstein condensate confined by a harmonic field, and rotating at a high velocity. Then its dynamics can be described by	the Lowest Landau Level (LLL) equation
   \begin{equation}\label{eq-LLL} 
\left\{
\begin{aligned}
&i \partial_t u=\Pi(|u|^2u), \quad   (t,z)\in \R\times \C,\\
&u(0,\cdot)=  u_0 \in \E,
\end{aligned}
\right.
\end{equation}
where $\E$ is the   Bargmann-Fock space defined as 
$$
\mathcal{E} =\big \{\, u(z) = e^{-\frac{|z|^2}{2}} f(z)\,,\;f \; \mbox{entire\ holomorphic}\,\big\}\cap L^2(\C ),
$$
and $\Pi$   is the  orthogonal projection on $\mathcal{E}$. For more details and references on this modeling, we refer to~\cite{ABD, Ho}, and the introduction of \cite{GGT}.

In the case of  two-components Bose-Einstein condensates, \textit{i.e.} two coupled Bose-Einstein condensates, the corresponding equation reads \cite{Mueller-Ho}
\begin{equation}\label{sys-sig-0} 
    \left\{
        \begin{array}{ll}
        i\partial_tu = \alpha \Pi(|u|^2u) + \beta \Pi(|v|^2u), &(t,z) \in \R \times \C, \\
        i\partial_tv = \gamma \Pi(|v|^2v) + \beta \Pi(|u|^2v), &\\
        u(0,\cdot) = u_0 \in \E,\quad v(0,\cdot)=v_0\in \E,&
        \end{array}
    \right.
\end{equation}
with $\alpha, \beta, \gamma \in \R$.   Coupled Bose-Einstein condensates also have applications in superfluidity and superconductivity.

\subsection{Mathematical motivation}

In this paper, we will  focus on the dynamics of the  following model system
 \begin{equation}\label{sys-sig} 
\left\{
\begin{aligned}
&i\partial_{t}u= \Pi (|v|^2 u), \quad   (t,z)\in \R\times \C,\\
&i\partial_{t}v=  \sigma \Pi (|u|^2 v),\\
&u(0,\cdot)=  u_0 \in\E,\; v(0,\cdot)=  v_0 \in\E,
\end{aligned}
\right.
\end{equation}
where $\sigma \in \{1,-1\}$. Some of our results, which rely on general arguments, can easily be extended to the complete system~\eqref{sys-sig-0}, while other parts of our work rely on explicit computations and depend heavily on the coefficients of the system.     The system \eqref{sys-sig} is Hamiltonian with the structure
\begin{equation*} 
\left\{
\begin{aligned}
&\dot u=-i\frac{\delta \mathcal{H}}{\delta \ov u},& \quad &\dot{\ov u}=i\frac{\delta \mathcal{H}}{\delta u},\\
&\dot v=-i\sigma  \frac{\delta \mathcal{H}}{\delta \ov v},& \quad &\dot{\ov v}=i\sigma \frac{\delta \mathcal{H}}{\delta v},
\end{aligned}
\right.
\end{equation*} 
and  its Hamiltonian functional reads 
$$\mathcal{H}(u,v)=\int_{\C}|u|^2|v|^2dL,$$
where $L$ stands for Lebesgue measure on $\C$. We will see that the qualitative dynamics of~\eqref{sys-sig} crucially depend on the  sign of $\sigma$. In the physical modeling, $\sigma=1$, but from a mathematical point of view we will see that it is  interesting to consider the case $\sigma=-1$ because new phenomena will occur.  \medskip

As we have mentioned above, equation \eqref{eq-LLL} is a model for fast rotating Bose-Einstein condensate. This equation can also be derived as a big box limit for weakly non-linear Schr\"odinger equations~\cite{FGH} or a longtime limit of a Gross-Pitaevskii equation with partial confinement \cite{HT}. We refer to the introduction of \cite{GGT} for more details. It is likely that similar derivation results may be obtained for~\eqref{sys-sig}, in the case $\sigma=1$ (in this latter case the Sobolev norm $\HH^1(\C)$ is a conservation law of the system, see below).\medskip

Several papers \cite{Nier, ABN, BiBiCrEv2, GGT} were devoted to the study of the  LLL equation \eqref{eq-LLL} in which dynamical aspects (well-posedness, bounds on the solutions\dots) as well as  stationary solutions  (classification of stationary solutions with finite number of zeros,  growth, stability results\dots) were studied.  It turns out that most of the tools developed in \cite{GGT} can apply to \eqref{sys-sig} and give similar results, at least for $\sigma=1$.    Equation \eqref{eq-LLL} is globally well-posed on $\mathcal{E}$ and we will see that it is also  the case of equation \eqref{sys-sig} (see Theorem~\ref{mainCauchy} below). We refer to \cite{BiBiCrEv, BiBiEv, Clerck-Evnin} for more results on LLL and related equations. \medskip

A natural question is the large time  description of the dynamics of the solutions to \eqref{eq-LLL} and~\eqref{sys-sig} and the behaviour of the    Sobolev norms.  A growth of Sobolev norm corresponds to a  transfer of energy from low to high frequencies, but in the Bargmann-Fock space $\E$, this is  equivalent to   a transfer in the physical space, since we have the following characterization of the Sobolev spaces in  $\E$ (see~\cite[Lemma~C.1]{GGT}): let $s \geq 0$, then there exist $c,C>0$ such that for all  $u\in  \HH^s(\C)$ (where~$\HH^s(\C)$ stands for the $L^2(\C)$-based Sobolev space, adapted to this context, see \eqref{def-sobo})
\begin{equation}\label{eq-hs}
c\|\<z\>^s u\|_{L^2(\C)} \leq  \|u\|_{\HH^s(\C)} \leq C\|\<z\>^s u\|_{L^2(\C)}, \quad \<z\>=(1+|z|^2)^{1/2}.
\end{equation}
 We are able to exhibit such a phenomenon of transfer of energy, but only for \eqref{sys-sig} in the case $\sigma=-1$ (the question about a possible growth of norms for \eqref{eq-LLL} and \eqref{sys-sig} with $\sigma=1$ is left open). This will be achieved thanks to the construction of explicit progressive waves (also called traveling waves) using the magnetic translations which are symmetries of the equation~\eqref{sys-sig}. Indeed, by~\eqref{eq-hs}, we see that non trivial  progressive waves have growing Sobolev norms (the norm $\HH^s(\C)$ defined above is not translation invariant, contrary to the usual Sobolev space $H^s(\R^n)$).

\subsection{Symmetries and conservation laws}
Observe that the system \eqref{sys-sig} is left invariant by several symmetries, which induce conservation laws (we refer to \cite[Section 2]{GGT} for more details). These symmetries are phase rotations
\begin{equation*} 
T_{\theta_1,\theta_2} : (u,v)(z) \mapsto \big(e^{i \theta_1 }u(z), e^{i \theta_2 }v(z)\big)  \qquad \mbox{for $(\theta_1, \theta_2)  \in \mathbb{T}^2$},
\end{equation*}
 space rotations 
\begin{equation*} 
L_{\theta} :(u,v)(z)  \mapsto \big(u(e^{i\theta}z), v(e^{i\theta}z) \big)  \qquad \mbox{for $\theta \in \mathbb{T}$},
\end{equation*}
and magnetic translations 
\begin{equation}\label{defR}
R_{\alpha} :(u,v)(z)  \mapsto \big(u(z+\alpha) e^{\frac{1}{2}(\overline z \alpha - z \overline{\alpha})}, v(z+\alpha) e^{\frac{1}{2}(\overline z \alpha - z \overline{\alpha})} \big) \qquad \mbox{for $\alpha \in \mathbb{C}$}.
\end{equation}
The corresponding conservation laws are: the mass
$$ M(u)=\int_{\C}|u(z)|^2dL(z), \quad M(v)=\int_{\C}|v(z)|^2dL(z),$$
  the angular momentum
    \begin{equation*}
 P_\sigma(u,v)=\int_{\C}\big(|z|^2-1\big)\big(|u(z)|^2+\sigma|v(z)|^2\big)dL(z),
 \end{equation*}
and the magnetic momentum
   \begin{equation*}
 Q_\sigma(u,v)=\int_{\C}z\big(|u(z)|^2+\sigma|v(z)|^2\big)dL(z).
 \end{equation*}
In the sequel, for short, we write $ P_{+}=P_1$,   $ P_{-}=P_{-1}$ and $ Q_{+}=Q_1$,   $ Q_{-}=Q_{-1}$. \medskip

We are now ready to state our main results:

\subsection{Global existence results for the nonlinear system}

To begin with, let us state a global well-posedness result which holds true for both cases $\sigma \in \{1,-1\}$.

\begin{thm}\label{mainCauchy}
For every $(u_0,v_0)\in \mathcal{E} \times \mathcal{E}$, there exists a unique solution $(u,v)\in \mathcal{C}^{\infty}  (\R ,\mathcal{E}\times \mathcal{E})$ to the system  \eqref{sys-sig}, and this solution depends smoothly on $(u_0,v_0)$. Moreover, for every  $t\in \R$ 
$$ M(u)=\int_{\C}|u(t,z)|^2dL(z)=M(u_0), \quad M(v)=\int_{\C}|v(t,z)|^2dL(z)=M(v_0),$$
and 
 $$\mathcal{H}(u,v)=  \int_{\C}|u(t,z)|^2|v(t,z)|^2dL(z)=\mathcal{H}(u_0,v_0).$$
  Furthermore, if $(zu_0,zv_0)\in L^2(\C )\times  L^2(\C )$, then  $\big(zu(t), zv(t)\big)\in L^2(\C)\times  L^2(\C )$ for every $t\in \R$, and 
      \begin{equation*}
 P_\sigma(u,v)=\int_{\C}\big(|z|^2-1\big)\big(|u(t,z)|^2+\sigma|v(t,z)|^2\big)dL(z)= P_\sigma(u_0,v_0),
 \end{equation*}
   \begin{equation*}
 Q_\sigma(u,v)=\int_{\C}z\big(|u(t,z)|^2+\sigma|v(t,z)|^2\big)dL(z)= Q_\sigma(u_0,v_0).
 \end{equation*}
 More generally, if for some $s>0$, $\big(\langle z\rangle ^s u_0, \<z\rangle ^s v_0\big)\in L^2(\C)  \times  L^2(\C ) $, then  $\big(\langle z\rangle ^su(t), \langle z\rangle ^sv(t)\big) \in L^2(\C) \times  L^2(\C ) $ for every $t\in \R$.
\end{thm}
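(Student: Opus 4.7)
The plan is a standard Cauchy--Lipschitz/energy approach, systematically using the fact that on the Bargmann--Fock space $\mathcal{E}$ one has the pointwise bound $\|u\|_{L^\infty(\C)}\leq \pi^{-1/2}\|u\|_{L^2(\C)}$, which follows from the reproducing kernel formula $u(z)=\langle u,K_z\rangle$ with $\|K_z\|_{L^2}=\pi^{-1/2}$. Since $\Pi$ is an orthogonal projection, this at once gives the bilinear estimate $\|\Pi(|v|^2 u)\|_{L^2}\lesssim \|v\|_{L^2}^2\|u\|_{L^2}$, together with the corresponding local Lipschitz bound on differences. The nonlinearity $F(u,v)=(\Pi(|v|^2u),\sigma\Pi(|u|^2v))$ therefore defines a smooth vector field on the Hilbert space $\mathcal{E}\times\mathcal{E}$, and the Cauchy--Lipschitz theorem in Banach spaces produces a unique maximal $\mathcal{C}^1$ solution depending smoothly on $(u_0,v_0)$; bootstrapping on the equation yields $\mathcal{C}^\infty$ regularity in time.

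The next step is mass conservation, which immediately gives globality. Using that $\Pi=\Pi^*$ and $\Pi u=u$ on $\mathcal{E}$,
\[
\tfrac{d}{dt}\|u(t)\|_{L^2}^2=2\Im\langle\Pi(|v|^2u),u\rangle=2\Im\langle|v|^2u,\Pi u\rangle=2\Im\int_{\C}|v|^2|u|^2\,dL=0,
\]
and similarly for $v$. Since the $L^2$ norms (and hence the $L^\infty$ norms) remain bounded on the maximal interval, the blow-up alternative excludes finite-time singularities, so the solution extends to all of $\R$. Conservation of the Hamiltonian $\mathcal{H}$ is a direct consequence of the Hamiltonian structure recalled in the excerpt (the two $\sigma$ factors cancel through the skew symmetry of the bracket); alternatively it can be checked by hand using the same self-adjointness trick applied to $\partial_t(|u|^2|v|^2)$.

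For the moment quantities $P_\sigma$ and $Q_\sigma$ one uses the rotation and magnetic translation symmetries $L_\theta$ and $R_\alpha$ from \eqref{defR} together with Noether's theorem; alternatively a direct computation uses that multiplication by $|z|^2-1$ acts on $\mathcal{E}$ as the number operator $\mathcal{N}$, which commutes with $\Pi$ and with the nonlinearity up to symmetric terms whose imaginary part vanishes. Hence $P_\sigma$ and $Q_\sigma$ are constant, provided one first knows $zu(t),zv(t)\in L^2$ so that the integrals are meaningful.

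The main technical point—and the part I expect to be the principal obstacle—is the propagation of $\langle z\rangle^s u(t),\langle z\rangle^s v(t)\in L^2$ for arbitrary $s>0$, since the weight $\langle z\rangle^{2s}$ is not a polynomial in the natural holomorphic variables and does not commute with $\Pi$. My plan is the standard truncated-moment argument: pick a smooth cutoff $\chi_R$ approximating $\langle z\rangle^{2s}$ monotonically and compute
\[
\tfrac{d}{dt}\int_{\C}\chi_R(|u|^2+|v|^2)\,dL=2\Im\int \chi_R\bar u\,\Pi(|v|^2u)\,dL+2\sigma\Im\int \chi_R\bar v\,\Pi(|u|^2v)\,dL.
\]
Writing $\Pi(|v|^2u)=|v|^2u-(1-\Pi)(|v|^2u)$ and using $\Pi u=u$ to move $\Pi$ onto $\chi_R u$, the leading real term $\int\chi_R|u|^2|v|^2\,dL$ cancels and one is left with a commutator $[\Pi,\chi_R]$ acting on $u$ (resp.\ $v$). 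Using the explicit smoothing properties of $\Pi$ on $\mathcal{E}$ (its integral kernel decays like a Gaussian off the diagonal, as exploited in~\cite{GGT}), this commutator is controlled by $\|\langle z\rangle^{2s-1}u\|_{L^2}\|v\|_{L^\infty}^2$ plus similar lower-order terms, uniformly in $R$. A Gronwall inequality on the $R$-truncated quantity, combined with $L^2$/$L^\infty$ conservation and induction on $s$ (or direct interpolation), yields an a priori bound independent of $R$; passing $R\to\infty$ by monotone convergence concludes.
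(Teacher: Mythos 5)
Your proposal is correct, and the local theory, globalization, and conservation-law parts coincide with the paper's proof: the authors run the same fixed point argument in $\mathcal{C}([0,T];\E\times\E)$ using the Carlen hypercontractivity bound $\|u\|_{L^\infty}\leq \pi^{-1/2}\|u\|_{L^2}$ to get the trilinear and contraction estimates, globalize via mass conservation, and treat the remaining conservation laws as consequences of the symmetries exactly as you do. The one place where you genuinely diverge is the propagation of the weighted norms $\langle z\rangle^s u,\langle z\rangle^s v\in L^2$. The paper avoids any a priori estimate here: it simply reruns the fixed point in $L^{2,s}_\E\times L^{2,s}_\E$ using the weighted trilinear bound $\|\langle z\rangle^s\Pi(abc)\|_{L^2}\leq C\|\langle z\rangle^s a\|_{L^2}\|b\|_{L^2}\|c\|_{L^2}$ (quoted from [GGT, Prop.\ 3.1--3.2], where only one factor carries the weight), and then observes that the local existence time depends only on the conserved $L^2$ norm, so the weighted solution extends globally. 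Your truncated-moment/commutator argument is a valid alternative: the cancellation of the leading term and the reduction to $[\Pi,\chi_R]$ is exactly the mechanism the paper formalizes later in Lemma~\ref{lem3.1} (via $\Pi(\ov z u)=(\partial_z+\tfrac{\ov z}2)\Pi u$) to prove the sharper polynomial bounds of Theorems~\ref{borne-poly} and~\ref{thm1.6}; the Gaussian off-diagonal decay of the kernel of $\Pi$ does control the commutator as you claim, though for $s>1$ you must interpolate $\|\langle z\rangle^{2s-1}u\|_{L^2}$ against $\|\langle z\rangle^{s}u\|_{L^2}$ and $\|u\|_{L^2}$ (or induct on $s$), as you note, to close the Gronwall inequality. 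The trade-off is that your route yields quantitative (polynomial-type) growth bounds as a by-product, whereas the paper's route is shorter for the mere qualitative statement of Theorem~\ref{mainCauchy} but gives only the crude bound coming from iterating the local theory.
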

In the previous statement let us stress that in the case $\sigma=1$, $M+P_+$ corresponds to the square of the $\HH^1$ norm, thus this norm is conserved, while in the case $\sigma=-1$, even if $P_-$ is preserved one could have 
$$\int_{\C}|z|^2 |u(t,z)|^2 dL(z) \longrightarrow +\infty,\quad \int_{\C}|z|^2 |v(t,z)|^2 dL(z) \longrightarrow +\infty, \quad t \longrightarrow \pm \infty,
  $$
 and this is actually what we will prove in Corollary \ref{cor-growth}. \medskip
 
We end this paragraph by stating  a result which shows that  the $L^{\infty}$ norm of any solution to~\eqref{sys-sig} is essentially constant. In particular, this shows that the solution to \eqref{sys-sig} never disperses.

\begin{prop}\label{propLinfini}
Let  $(u_0,v_0)\in \mathcal{E} \times \mathcal{E}$ and consider  $(u,v)\in \mathcal{C}^{\infty}  (\R ,\mathcal{E}\times \mathcal{E})$  the solution  to the system~\eqref{sys-sig}. Then  for all~$t \in \R$
 \begin{equation*} 
\begin{aligned}
& \frac{\sqrt{\pi}\mathcal{H}(u_0,v_0)}{\|u_0\|_{L^2(\C)}\|v_0\|^2_{L^2(\C)}} \leq  \|u(t)\|_{L^\infty(\C)}\leq \frac{1}{\sqrt{\pi}}\|u_0\|_{L^2(\C)},  \\[8pt]
& \frac{\sqrt{\pi}\mathcal{H}(u_0,v_0)}{\|v_0\|_{L^2(\C)}\|u_0\|^2_{L^2(\C)}} \leq  \|v(t)\|_{L^\infty(\C)}\leq \frac{1}{\sqrt{\pi}}\|v_0\|_{L^2(\C)}.
\end{aligned}
\end{equation*}
\end{prop}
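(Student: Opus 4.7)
The plan is to prove both inequalities by combining the conservation laws from Theorem \ref{mainCauchy} with the standard pointwise bound for Bargmann-Fock functions: for every $w \in \mathcal{E}$ and every $z \in \C$,
\begin{equation*}
|w(z)| \leq \frac{1}{\sqrt{\pi}} \|w\|_{L^2(\C)}.
\end{equation*}
This estimate comes from the reproducing kernel $K(z,\zeta) = \frac{1}{\pi} e^{-|z|^2/2 -|\zeta|^2/2 + z\bar\zeta}$ of $\mathcal{E}$ via $w(z) = \langle w, K(z,\cdot)\rangle_{L^2}$, Cauchy--Schwarz, and the computation $\|K(z,\cdot)\|_{L^2}^2 = \frac{1}{\pi}$ (a Gaussian integral), and is presumably recorded earlier in the paper or in \cite{GGT}.

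For the upper bound, the plan is simply to apply this pointwise inequality to $u(t)$ and $v(t)$ and then invoke the conservation of the masses $M(u)$ and $M(v)$ given by Theorem \ref{mainCauchy}, which yields
\begin{equation*}
\|u(t)\|_{L^\infty(\C)} \leq \frac{1}{\sqrt{\pi}} \|u(t)\|_{L^2(\C)} = \frac{1}{\sqrt{\pi}} \|u_0\|_{L^2(\C)},
\end{equation*}
and symmetrically for $v$.

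For the lower bound, I would write the Hamiltonian as
\begin{equation*}
\mathcal{H}(u,v) = \int_\C |u|^2 |v|^2 dL \leq \|u\|_{L^\infty(\C)} \int_\C |u|\,|v|^2 dL,
\end{equation*}
then use Cauchy--Schwarz in the form $\int |u|\,|v|^2 dL \leq \|u\|_{L^2(\C)} \|v\|_{L^4(\C)}^2$, and finally control $\|v\|_{L^4}$ using once more the Bargmann--Fock pointwise bound, via
\begin{equation*}
\|v\|_{L^4(\C)}^4 = \int_\C |v|^2 |v|^2 dL \leq \frac{1}{\pi} \|v\|_{L^2(\C)}^2 \int_\C |v|^2 dL = \frac{1}{\pi} \|v\|_{L^2(\C)}^4.
\end{equation*}
Chaining these gives $\mathcal{H}(u,v) \leq \frac{1}{\sqrt\pi} \|u\|_{L^\infty} \|u\|_{L^2} \|v\|_{L^2}^2$, and rearranging together with conservation of $\mathcal{H}$, $M(u)$, $M(v)$ produces exactly the claimed lower bound for $\|u(t)\|_{L^\infty}$. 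The inequality for $v$ follows by swapping the roles of $u$ and $v$.

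There is no real obstacle here; the only step requiring a small idea is the lower bound, where the key observation is that the Bargmann--Fock pointwise estimate can be fed into an $L^4$ bound in order to eliminate the $\|v\|_{L^4}$ term and recover only $L^2$ norms of the initial data.
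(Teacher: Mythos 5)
Your proposal is correct and follows essentially the same route as the paper: the upper bound is the $L^2\to L^\infty$ Bargmann--Fock estimate (the $p=2$, $q=\infty$ case of the Carlen hypercontractivity bound \eqref{hypercontract}, which is exactly your reproducing-kernel computation) combined with mass conservation, and the lower bound combines conservation of $\mathcal{H}$ with H\"older and that same estimate. The only cosmetic difference is the order in which you apply H\"older (the paper writes $\mathcal{H}\leq \|u\|_{L^4}^2\|v\|_{L^4}^2\leq \|u\|_{L^2}\|u\|_{L^\infty}\|v\|_{L^2}\|v\|_{L^\infty}$ and then bounds $\|v\|_{L^\infty}$, while you peel off $\|u\|_{L^\infty}$ first), which leads to the identical final inequality.
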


This is a  rigidity  result which is induced by the properties of the space $\E$ and by the conservation laws of \eqref{sys-sig} and does not rely on the specific dynamics of~\eqref{sys-sig}. We refer to~\cite[Lemma 3.3]{Caze} for a similar property for the Schr\"odinger equation with logarithmic nonlinearity.

\subsection{Progressive  waves and growth of Sobolev norms}
In view of the   invariances induced  by phase rotations and magnetic translations, it is natural to define progressive  waves for equation~\eqref{sys-sig} as solutions of the form
 \begin{equation}\label{ansatz-1}
 \big(u(t,z), v(t,z)\big)=\big(e^{-i\lambda t}U(z+\alpha t) e^{\frac{1}{2}(\overline z \alpha - z \overline{\alpha})t},e^{-i \mu t}V(z+\alpha t) e^{\frac{1}{2}(\overline z \alpha - z \overline{\alpha})t} \big),
 \end{equation}
for some $(\lambda ,\mu )\in \R^2$ and $\alpha =\alpha_1+i \alpha_2 \in \C$. Equivalently, the corresponding initial condition  $(U,V)$  have to satisfy the system
 \begin{equation} \label{syst}
\left\{
\begin{aligned}
& \lambda U+ (\alpha \cdot  \Gamma) U = \Pi (|V|^2 U),  \\
& \mu V+ (\alpha \cdot  \Gamma) V = \sigma \Pi (|U|^2 V),
\end{aligned}
\right.
\end{equation}
where $\alpha \cdot  \Gamma  :=\alpha_1 \Gamma_1 +\alpha_2 \Gamma_2$   (see Appendix~\ref{appendix})    and 
\begin{equation*} 
\Gamma_1=i(-z +\partial_z+\frac{\ov z}2), \quad \Gamma_2=-(z +\partial_z+\frac{\ov z}2).
\end{equation*}
 
 To be begin with, let us see if the existence of progressive waves is compatible with the conservation laws. Actually, we have the following relations, which hold true for all $\beta \in \C$
 \begin{equation}\label{rot-Q}
Q_\sigma(R_\beta U, R_\beta V)=Q_\sigma(U,V)-\beta  \big(M(U)+\sigma M(V)\big)\,,
\end{equation} 
and
\begin{equation} \label{rot-P}
P_\sigma(R_\beta U, R_\beta V)= P_\sigma(  U,  V) -\big(\beta \ov{Q_\sigma(U,V)} +\ov{\beta} {Q_\sigma(U,V)} \big)+|\beta|^2\big(M(U)+\sigma M(V)\big),
\end{equation}
 and this suggests that the sign of  $\sigma \in \{1,-1\}$ will play a role in the existence of traveling waves. \medskip

For $s\geq 0$, denote by $L^{2,s}=\big\{u\in \mathscr{S}'(\C), \;\<z\>^su \in L^2(\C)\big\}$ and $L^{2,s}_{\E}=L^{2,s}  \cap \E$.

\subsubsection{Case $\sigma=1$} In the case $\sigma =1$, we obtain   similar qualitative results to the one obtained for the LLL equation  $i\partial_t u=\Pi(|u|^2u)$, and the arguments are similar.\medskip

To begin with, we can prove the following global existence result, with  polynomial bounds on the Sobolev norm of general solutions to~\eqref{sys-sig}:
 
 \begin{thm} \label{borne-poly}
Assume that $\sigma=1$.  Let $k\geq 0$ be an integer and $(u_0,v_0)\in L^{2,k}_\mathcal{E} \times L^{2,k}_\mathcal{E}$. Then there exists a unique solution $(u,v)\in \mathcal{C}^{\infty}  \big(\R , L^{2,k}_\mathcal{E} \times L^{2,k}_\mathcal{E}\big)$ to  equation~\eqref{sys-sig} and it satisfies for all $t \in \R$,
\begin{equation}\label{bk3}
   \begin{aligned}  
&\Vert \langle z\rangle ^ku(t)\Vert _{L^2(\C)}+\Vert \langle z\rangle ^kv(t)\Vert _{L^2(\C)} \lesssim  \big(1+|t| \big)^{\frac{k-1}4} &\quad\text{if} &\quad k\geq 3\\
&\Vert \langle z\rangle ^2u(t)\Vert _{L^2(\C)}+\Vert \langle z\rangle ^2v(t)\Vert _{L^2(\C)}\lesssim  \big(1+|t| \big)^{\frac12}        &\quad\text{if} &\quad k=2.
\end{aligned}
\end{equation}
 Moreover, if  $\langle z\rangle^3 u_0\in L^2(\C)$ and $\langle z\rangle^3 v_0\in L^2(\C)$, then 
 \begin{equation}\label{newB2}
 \Vert \langle z\rangle ^2u(t)\Vert _{L^2(\C)}+\Vert \langle z\rangle ^2v(t)\Vert _{L^2(\C)} \lesssim  \big(1+|t| \big)^{\frac14}.
 \end{equation}
\end{thm}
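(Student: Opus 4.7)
The plan is to establish a hierarchy of weighted $L^2$ energy estimates and close them by induction on $k$. Set
$$N_k(t):=\|\langle z\rangle^k u(t)\|_{L^2(\C)}^2+\|\langle z\rangle^k v(t)\|_{L^2(\C)}^2.$$
By Theorem~\ref{mainCauchy}, mass is conserved and $N_0(t)\equiv N_0(0)$; in the case $\sigma=1$ the combination $M(u)+M(v)+P_+(u,v)=\int_\C|z|^2(|u|^2+|v|^2)\,dL$ is also a conservation law, hence $N_1(t)\lesssim 1$ uniformly in~$t$. Proposition~\ref{propLinfini} further provides uniform-in-time bounds on $\|u(t)\|_{L^\infty}$ and $\|v(t)\|_{L^\infty}$. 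These serve as the base cases.

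Next, I would derive the energy identity. Differentiating and using~\eqref{sys-sig}, the self-adjointness of $\Pi$ on $L^2(\C)$, and the reality of $\int_\C\langle z\rangle^{2k}|u|^2|v|^2\,dL$, one obtains
$$\frac{d}{dt}\int_\C\langle z\rangle^{2k}|u|^2\,dL=2\Im\int_\C(I-\Pi)(|v|^2u)\,\overline{(I-\Pi)(\langle z\rangle^{2k}u)}\,dL,$$
together with an analogous identity for $v$ (roles of $u,v$ swapped). Only the $\E^\perp$-components appear because the cross pairings between $\E$ and $\E^\perp$ vanish. The first factor on the right is uniformly controlled by $\|(I-\Pi)(|v|^2u)\|_{L^2}\leq\|v\|_\infty^2\|u\|_{L^2}\lesssim 1$.

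The technical core is then a commutator estimate for $\Pi$ acting on polynomial multipliers. Using the Bergman-kernel identity $\Pi(\bar z\cdot fe^{-|z|^2/2})=f'(z)e^{-|z|^2/2}$ for $fe^{-|z|^2/2}\in\E$ (which iterates to $\Pi\circ|z|^{2m}=\prod_{j=1}^{m}(z\partial_z+j)$ on $\E$), one expresses $(I-\Pi)(\langle z\rangle^{2k}u)$ in a form whose pairing with $(I-\Pi)(|v|^2u)$ is controlled by strictly lower-order weighted norms of $u,v$. Combined with the uniform bound above and Cauchy--Schwarz, this produces a differential inequality schematically of the form
$$|\dot N_k(t)|\leq C\,\sqrt{N_{k-2}(t)}\qquad(k\geq 2),$$
with minor adjustments at $k=2,3$. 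Integrating inductively starting from $N_0,N_1\lesssim 1$ yields $N_k(t)\lesssim(1+|t|)^{(k-1)/2}$ for all $k\geq 2$, which is equivalent to~\eqref{bk3}.

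Finally, the improved bound~\eqref{newB2} at $k=2$ follows from a direct Cauchy--Schwarz interpolation between the $k=1$ conservation law and the $k=3$ bound already obtained:
$$\|\langle z\rangle^2u\|_{L^2}^2=\int_\C\langle z\rangle^4|u|^2\,dL\leq\|\langle z\rangle u\|_{L^2}\,\|\langle z\rangle^3 u\|_{L^2}\lesssim(1+|t|)^{1/2},$$
and analogously for $v$; taking square roots gives $\|\langle z\rangle^2u\|_{L^2}\lesssim(1+|t|)^{1/4}$. The main obstacle is the commutator estimate: one must track the dependence of $\|(I-\Pi)(\langle z\rangle^{2k}u)\|_{L^2}$ (or, more precisely, of the pairing appearing in the energy identity) on the lower-order norms with enough sharpness that the induction closes with the stated exponents rather than degrading them by a half-power.
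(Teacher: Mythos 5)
Your skeleton --- the energy identity through $I-\Pi$, the uniform $L^\infty$ control of the nonlinear factor, the conservation of $N_1$ when $\sigma=1$, and the final Cauchy--Schwarz interpolation between $k=1$ and $k=3$ for \eqref{newB2} --- matches the paper's strategy, and your last step is exactly the paper's proof of \eqref{newB2}. But the central quantitative step is both unproved and wrong as stated, and the route you sketch for it cannot deliver it. After bounding $\|(I-\Pi)(|v|^2u)\|_{L^2}\lesssim 1$, Cauchy--Schwarz leaves you with $\|(I-\Pi)(\langle z\rangle^{2k}u)\|_{L^2}$, and this is genuinely of order $\|\langle z\rangle^{2k}u\|_{L^2}$, not of order $k-2$: testing on $u=\phi_0$ gives $\||z|^{2k}\phi_0\|_{L^2}=\sqrt{(2k)!}$ while $\|\Pi(|z|^{2k}\phi_0)\|_{L^2}=k!$, so the $\E^\perp$ component dominates, of size about $k!\,2^k$, vastly larger than $\sqrt{(k-2)!}$. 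The gain in the paper does not come from Cauchy--Schwarz on the pairing but from the identity $\Pi(|z|^{2k}u)=(\partial_z+\tfrac{\bar z}{2})^k(z^ku)$ followed by integration by parts so that \emph{all} derivatives land on $W=|v|^2$, where Lemma~\ref{lem.deri} gives $\|\partial_z^jW\|_{L^\infty}\lesssim\|v\|_{L^\infty}^2$; this is the content of Lemma~\ref{lem3.1} and formula \eqref{form-deri}.

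Even granting that machinery, the shape of your differential inequality is off, and this matters. The $j$-th term of \eqref{form-deri} still carries the weight $|z|^{2k-j}$, so without further structure one only gets $|\dot N_k|\lesssim\int\langle z\rangle^{2k-1}(|u|^2+|v|^2)\,dL$, which yields the $\sigma=-1$ rate $(1+|t|)^{k}$. The improvement specific to $\sigma=1$ rests on a cancellation you never identify: adding the $u$- and $v$-identities turns the $j=1$ term into $\Im\int z^k\bar z^{k-1}\partial_z(|u|^2|v|^2)\,dL$, which vanishes after integration by parts (see \eqref{ipp2}), and the rearrangement $|z|^2\partial_z^2(|u|^2)=\partial_z^2(|zu|^2)-2\bar z\partial_z(|u|^2)$ removes two more powers of the weight. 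What survives is controlled by $\int\langle z\rangle^{2k-4}(|u|^2+|v|^2)\,dL$, i.e.\ by $N_{k-2}$, \emph{not} by $\sqrt{N_{k-2}}$. With the correct $|\dot N_k|\lesssim N_{k-2}$ your two-step induction gives exponent $\lfloor k/2\rfloor$ for $N_k$, worse than the theorem for even $k$; with your $\sqrt{N_{k-2}}$ the recursion $a_k=a_{k-2}/2+1$ converges to $2$, a claim far stronger than the theorem and inconsistent with your own stated conclusion (at $k=2$ it gives $N_2\lesssim 1+|t|$, not $(1+|t|)^{1/2}$). The paper avoids induction on $k$ altogether: H\"older gives $\int\langle z\rangle^{2k-4}|v|^2\,dL\leq N_k^{1-\frac{2}{k-1}}N_1^{\frac{2}{k-1}}$ with $N_1$ conserved, producing the closed inequality $\dot N_k\lesssim N_k^{1-\frac{2}{k-1}}$, which integrates directly to $N_k\lesssim(1+|t|)^{\frac{k-1}{2}}$, with the cases $k=2,3$ treated separately.
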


 The equivalence of norms \eqref{eq-hs} states that, roughly speaking, a weight $z$ is equivalent to a  derivative $\partial_z$, in the $L^2$-norm.    The key observation used in the proof of Theorem~\ref{borne-poly} is that for any $u \in \E$, we have (see Lemma~\ref{lem.deri} below), for all $j,k \in \N$
$$  \big\| \partial^j_{\ov z} \partial^k_z\big(|u|^2\big)\big\|_{L^\infty(\C)} \leq C_{jk}  \| u\|^2_{L^\infty(\C)},$$
which allow to control high order derivatives, without using Sobolev norms. \medskip

   The  same argument can be used for the LLL equation, and this allows to improve the bounds obtained in~\cite[Theorem~1.2]{GGT}, we have added the statement in the Appendix~\ref{AppendB}. \medskip

The constants in Theorem~\ref{borne-poly} can be made more precise. Denote by 
$$X_k(t)=\Vert \langle z\rangle ^ku(t)\Vert _{L^2(\C)}+\Vert \langle z\rangle ^kv(t)\Vert _{L^2(\C)},$$ 
then we actually prove   (see~\eqref{borne-expl}) that in any of the cases \eqref{bk3},
\begin{equation*} 
   \begin{aligned}  
&X_k(t) \leq C X_k(0) \big(1+X^2_1(0) |t| \big)^{\frac{k-1}4}&\quad\text{if} &\quad k\geq 3\\
&X_2(t) \leq C X_2(0) \big(1+X^2_1(0) |t| \big)^{\frac{1}2}        &\quad\text{if} &\quad k=2,
\end{aligned}
\end{equation*}
but in all cases, we do not think that the powers of $t$ in the bounds are optimal. \medskip

Our next result shows,  as for LLL, that there are no nontrivial progressive waves in $L^{2,1}_\mathcal{E}$ for \eqref{sys-sig}: 
 
\begin{prop} 
Assume that $\sigma=1$, then there exist no progressive waves in $L^{2,1}_\mathcal{E}$ with $\alpha \neq 0$ to the system~\eqref{sys-sig}.
\end{prop}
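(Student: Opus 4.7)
The plan is to exploit the conservation of the magnetic momentum $Q_\sigma$ together with the fact that its behaviour under magnetic translations is affine, as recorded in \eqref{rot-Q}. Suppose, towards a contradiction, that $(U,V)\in L^{2,1}_\mathcal{E}\times L^{2,1}_\mathcal{E}$ is a nontrivial solution of \eqref{syst} with $\alpha\neq 0$. Then the ansatz \eqref{ansatz-1} produces a global solution $(u,v)$ of \eqref{sys-sig} with initial data $(U,V)$. Since $\<z\>U,\<z\>V\in L^2(\C)$, Theorem~\ref{mainCauchy} ensures that $\<z\>u(t),\<z\>v(t)\in L^2(\C)$ for every $t\in\R$, so $Q_\sigma(u(t),v(t))$ is well defined and equals $Q_\sigma(U,V)$ for all $t$.

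On the other hand, the phase factors $e^{-i\lambda t}$, $e^{-i\mu t}$ and the magnetic exponential $e^{\frac{1}{2}(\ov z\alpha-z\ov\alpha)t}$ disappear in modulus, so $|u(t,z)|^2=|U(z+\alpha t)|^2$ and $|v(t,z)|^2=|V(z+\alpha t)|^2$. The change of variables $w=z+\alpha t$ (equivalently formula \eqref{rot-Q} applied with $\beta=\alpha t$, since phase rotations do not affect $Q_\sigma$) yields
$$
Q_\sigma(u(t),v(t)) = Q_\sigma(U,V) - \alpha t\bigl(M(U)+\sigma M(V)\bigr).
$$
Comparing this with the conservation law above forces the relation $\alpha\bigl(M(U)+\sigma M(V)\bigr)=0$.

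Specialising to $\sigma=1$ gives $M(U)+M(V)=\|U\|_{L^2(\C)}^2+\|V\|_{L^2(\C)}^2$, which is strictly positive by nontriviality of $(U,V)$. Hence $\alpha=0$, contradicting the assumption, and the proposition follows.

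The argument is essentially a one-line consequence of the conservation law, and there is no real obstacle: the only point to verify carefully is that the traveling wave produced by the ansatz actually lies in the class where $Q_\sigma$ is preserved, which is precisely the reason for working in $L^{2,1}_\mathcal{E}\times L^{2,1}_\mathcal{E}$. Note that this obstruction disappears as soon as $M(U)+\sigma M(V)=0$, which is impossible for $\sigma=1$ but can be arranged for $\sigma=-1$ by taking $\|U\|_{L^2(\C)}=\|V\|_{L^2(\C)}$; this is presumably the opening the authors exploit in the $\sigma=-1$ case to construct the nontrivial progressive waves responsible for the growth of Sobolev norms announced in Corollary~\ref{cor-growth}.
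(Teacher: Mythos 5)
Your proof is correct and is exactly the argument the paper gives: apply \eqref{rot-Q} with $\beta=\alpha t$ and compare with the conservation of $Q_+$ from Theorem~\ref{mainCauchy}, forcing $\alpha\bigl(M(U)+M(V)\bigr)=0$. Your closing remark about the obstruction vanishing when $M(U)=M(V)$ for $\sigma=-1$ also matches the paper's subsequent discussion.
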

 
The proof of this statement is very simple: we apply   \eqref{rot-Q} with $\beta=\alpha t$, and this gives a contradiction with the fact that  $Q_+$  is a conservation law of the system.

\subsubsection{Case $\sigma=-1$} In the case $\sigma=-1$, our result on the control of the norms for \eqref{sys-sig} reads as follows:

 \begin{thm}\label{thm1.6}
Assume that $\sigma=-1$.  Let $k\geq 0$ be an integer and $(u_0,v_0)\in L^{2,k}_\mathcal{E} \times L^{2,k}_\mathcal{E}$. Then there exists a unique solution $(u,v)\in \mathcal{C}^{\infty}  \big(\R , L^{2,k}_\mathcal{E} \times L^{2,k}_\mathcal{E}\big)$ to  equation~\eqref{sys-sig} and it satisfies for all~$t \in \R$,
\begin{equation}\label{borne-opt}
\begin{aligned} 
&  \big\| \<z\>^ku(t)\big\|_{L^2(\C)}  \leq   \big\| \<z\>^ku_0\big\|_{L^2(\C)} \big(1+C \|v_0\|^2_{L^2(\C)}  |t| \big)^k  \\
&    \big\| \<z\>^kv(t)\big\|_{L^2(\C)}  \leq   \big\| \<z\>^kv_0\big\|_{L^2(\C)} \big(1+C \|u_0\|^2_{L^2(\C)}  |t| \big)^k.
\end{aligned}
\end{equation}
Moreover,  for all $t \in \R$,
\begin{equation}\label{borneck}
\begin{aligned} 
&  \Big|   \big\|  \langle z\rangle ^ku(t)  \big\| ^2 _{L^2(\C)}-  \big\|  \langle z\rangle ^kv(t)  \big\| ^2 _{L^2(\C)} \Big| \lesssim  \big(1+|t| \big)^{2k-1}&\quad\text{if} &\quad k\geq 2. \\
&   \Big|   \big\|  \langle z\rangle ^ku(t)  \big\| ^2 _{L^2(\C)}-  \big\|  \langle z\rangle ^kv(t)  \big\| ^2 _{L^2(\C)} \Big| \lesssim  1&\quad\text{if} &\quad k=0,1.
\end{aligned}
\end{equation}

\end{thm}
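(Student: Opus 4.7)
The plan is to prove the theorem in three parts: (i) the individual bound \eqref{borne-opt} by induction on $k$ (which also yields global existence in $L^{2,k}_\mathcal{E}\times L^{2,k}_\mathcal{E}$ via Theorem~\ref{mainCauchy}); (ii) the difference bound \eqref{borneck} for $k \in \{0,1\}$ via conservation laws; (iii) the difference bound for $k \ge 2$ by a cancellation argument. For part~(i), the base case $k=0$ is mass conservation. For the inductive step I would differentiate the squared norm,
$$\frac{d}{dt}\|\<z\>^k u\|^2_{L^2} = 2\,\mathrm{Im}\int_{\C}\Pi(|v|^2 u)\,\bar u\,\<z\>^{2k}\,dL,$$
and rewrite it, via self-adjointness of $\Pi$ and the identity $\Pi(|z|^{2j}w) = (N+1)(N+2)\cdots(N+j)w$ for $w \in \mathcal{E}$ (with $N=a^*a$, $a^*$ multiplication by $z$, $a = \Pi\circ\bar z$), as $2\,\mathrm{Im}\int|v|^2 u\,\overline{P_k(N)u}\,dL$ for an explicit polynomial $P_k$ of degree~$k$.

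The key mechanism is the orthogonal decomposition $\bar z\,w = aw + r_w$ for $w\in\mathcal{E}$, with $r_w \in \mathcal{E}^\perp$ satisfying the striking identity $\|r_w\|_{L^2} = \|w\|_{L^2}$ (read off from $\|\bar z w\|^2_{L^2} = \|w\|^2_{L^2} + \|aw\|^2_{L^2}$). Writing $Nw = z(aw)$, I would iteratively peel off $N$-factors from $P_k(N)$: at each step the diagonal contribution $\int|v|^2|a^j u|^2\cdot(\text{real})$ vanishes under $\mathrm{Im}$, leaving boundary terms $\mathrm{Im}\int|v|^2 r_{a^{j-1}u}\,\overline{(\cdots)}\,dL$. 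These are controlled by Cauchy--Schwarz together with the uniform bound $\|v(t)\|_{L^\infty}\le \pi^{-1/2}\|v_0\|_{L^2}$ from Proposition~\ref{propLinfini} and the smoothness of $|v|^2$ from Lemma~\ref{lem.deri}, producing a differential inequality of the form $\big|\tfrac{d}{dt}\|\<z\>^k u\|_{L^2}\big| \lesssim k\,\|v_0\|^2_{L^2}\,\|\<z\>^{k-1}u\|_{L^2}$. Integrating against the inductive hypothesis for $k-1$ yields \eqref{borne-opt} for $u$; the bound for $v$ follows by the symmetric argument, with $\|u_0\|_{L^2}$ in place of $\|v_0\|_{L^2}$.

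For part~(ii), $M(u)-M(v)$ is conserved (handling $k=0$), and the identity $\|\<z\>u\|^2_{L^2} - \|\<z\>v\|^2_{L^2} = 2(M(u)-M(v)) + P_-(u,v)$ expresses the $k=1$ difference as a sum of conserved quantities from Theorem~\ref{mainCauchy}. For part~(iii), the $\sigma=-1$ sign gives
$$\frac{d}{dt}\bigl(\|\<z\>^k u\|^2_{L^2} - \|\<z\>^k v\|^2_{L^2}\bigr) = 2\,\mathrm{Im}\int_{\C}\!\bigl[|v|^2 u\,\overline{P_k(N)u} + |u|^2 v\,\overline{P_k(N)v}\bigr]\,dL,$$
to which I would apply the peeling of part~(i) to both integrands in parallel. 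Naive bounds via \eqref{borne-opt} yield a right-hand side of order $(1+|t|)^{2k-1}$ and would integrate only to $(1+|t|)^{2k}$; the main obstacle will be to identify the cancellation between the $u$- and $v$-contributions (stemming from the symmetric appearance of $|u|^2$ and $|v|^2$ in the two integrands combined with the opposite signs induced by $\sigma=-1$) that reduces the derivative to order $(1+|t|)^{2k-2}$, which then integrates to the claimed $(1+|t|)^{2k-1}$. Tracking which boundary terms cancel and which survive through the combinatorics of the peeling will be the principal difficulty.
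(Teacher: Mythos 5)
Your parts (i) and (ii) are sound. For \eqref{borne-opt} you take a genuinely different route from the paper: you phrase everything through $N=a^*a$ and the decomposition $\bar z w=aw+r_w$ with $\|r_w\|_{L^2}=\|w\|_{L^2}$, and close by induction on $k$. The paper instead derives an explicit formula (Lemma~\ref{lem3.1}) expressing $\frac{d}{dt}\int|z|^{2k}|u|^2\,dL$ as a sum over $j=1,\dots,k$ of integrals of $z^k\bar z^{k-j}|u|^2\partial_z^j(|v|^2)$, bounds $\|\partial_z^j(|v|^2)\|_{L^\infty}\lesssim\|v\|_{L^\infty}^2\lesssim\|v_0\|_{L^2}^2$ via Lemma~\ref{lem.deri} and hypercontractivity, and closes the ODE in one stroke by the H\"older interpolation $\int\<z\>^{2k-1}|u|^2\leq\|u\|_{L^2}^{1/k}\|\<z\>^ku\|_{L^2}^{2-1/k}$, with no induction. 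Both roads lead to the same differential inequality $\frac{d}{dt}\|\<z\>^ku\|_{L^2}\lesssim\|v_0\|_{L^2}^2\|\<z\>^{k-1}u\|_{L^2}$; your ladder-operator version is a legitimate alternative (and, as you note, needs only the $L^\infty$ bound on $v$, not the derivative bounds), though the peeling for general $k$ would still have to be written out. Your treatment of $k=0,1$ via $M$ and $P_-$ is exactly the paper's.

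Part (iii), however, contains a genuine gap: you correctly diagnose that a cancellation must reduce the derivative of the difference from order $(1+|t|)^{2k-1}$ to $(1+|t|)^{2k-2}$, and you correctly locate its source in the symmetric combination $|v|^2(\cdots u)+|u|^2(\cdots v)$ produced by $\sigma=-1$, but you leave it as "the principal difficulty" rather than exhibiting it. In the paper's formulation the cancellation is a single clean identity, not a combinatorial bookkeeping problem: only the $j=1$ (one-derivative) term is dangerous, and there the two contributions combine by the Leibniz rule into
\begin{equation*}
\mathfrak{Im}\int_{\C} z^k\bar z^{k-1}\,\partial_z\big(|u|^2|v|^2\big)\,dL
=-k\,\mathfrak{Im}\int_{\C}|z|^{2(k-1)}|u|^2|v|^2\,dL=0,
\end{equation*}
after one integration by parts, since the last integrand is real. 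The terms with $j\geq2$ require no cancellation at all: each carries a weight $z^k\bar z^{k-j}$ with $2k-j\leq 2k-2$, so after bounding $\|\partial_z^j(|u|^2)\|_{L^\infty}\lesssim\|u_0\|_{L^2}^2$ they are controlled by $\|\<z\>^{k-1}u\|_{L^2}^2+\|\<z\>^{k-1}v\|_{L^2}^2\lesssim(1+|t|)^{2k-2}$ using \eqref{borne-opt} at level $k-1$. Until you produce the analogue of this identity in your $a$, $a^*$ language (the relevant translation is $\bar u\,r_u=-\partial_z(|u|^2)$, which converts your boundary terms back into derivative terms), the proof of \eqref{borneck} for $k\geq2$ does not close.
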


 In the case $\sigma=-1$, the   $L^{2,1}$ norm is no more controlled thus  the bounds on the solutions are cruder. However, we will see in  
Corollary~\ref{cor-growth} that the bounds \eqref{borne-opt} are optimal. \medskip

Concerning the progressive waves, in the case $\sigma=-1$,  the relations \eqref{rot-Q} and \eqref{rot-P} do no more  give an insurmountable obstruction to their existence. By taking $\beta=\alpha t$, we see that any   non trivial progressive wave has to satisfy $M(U)=M(V)$ and $\ov{\alpha} Q_-(U,V) \in i \R$. Indeed, we can prove that such solutions exist, and we are able to classify the ones which have a finite number of zeros.\medskip

For $n\geq 0$, we define the following family of $L^2(\C)$-normalized functions of $\E$
$$
\varphi_n(z) = \frac{1}{\sqrt{\pi n!}} z^n e^{-\frac{|z|^2}{2}},
$$
which forms a Hilbertian basis of $\E$, and for $\gamma \in \C$, we define 
\begin{equation*} 
\phi_n^\gamma(z) = R_{- \overline \gamma} (\phi_n)(z) = \frac{1}{\sqrt{\pi n!}} (z-\overline \gamma)^n e^{-\frac{|z|^2}{2}-\frac{|\gamma|^2}{2} + \gamma z}.
\end{equation*}
Then our classification result reads

\begin{thm}\label{classi}
Assume that $\sigma=-1$, then the progressive waves in $\mathcal{E}$ to \eqref{sys-sig} which have a finite number of zeros are given by the initial conditions 
 \begin{enumerate}[$(i)$]
\item when $\alpha=0$
 \begin{equation*}
\left\{
\begin{aligned}
& U= Ae^{i\theta_1}\phi_{n_1}^{\gamma}   \\
& V= Be^{i\theta_2}\phi_{n_2}^{\gamma},
\end{aligned}
\right.
\end{equation*}
 with $A,B \geq 0$, $n_1,n_2 \in \N$, $\theta_1, \theta_2 \in  \R$, $\gamma \in \C$,  where
$$ \lambda = \frac{(n_1+n_2)!}{2^{n_1+n_2+1}\pi n_1! n_2!}B^2, \quad \mu= - \frac{(n_1+n_2)!}{2^{n_1+n_2+1}\pi n_1! n_2!}A^2\;; $$

\item when $\alpha\neq 0$
 \begin{equation}\label{prog}
\left\{
\begin{aligned}
& U= Ke^{i\theta_1} \big(\frac12 \phi_0^\gamma +\frac{\sqrt{3}}2 ie^{i\theta} \phi_{1}^\gamma      \big)   \\
& V= Ke^{i\theta_2} \big(\frac12 \phi_0^\gamma -\frac{\sqrt{3}}2i e^{i\theta} \phi_{1}^\gamma      \big),
\end{aligned}
\right.
\end{equation}
with $K \geq  0$, $\theta, \theta_1, \theta_2 \in  \R$, $\gamma \in \C$, where
$$ \lambda = \frac{K^2}{32\pi}(7+2\sqrt{3}\Im\big(\gamma e^{-i\theta})\big), \quad \mu = \frac{K^2}{32\pi}\big(-7+2\sqrt{3}\Im(\gamma e^{-i\theta})\big), \quad \alpha = \frac{\sqrt{3}}{32\pi}K^2e^{-i\theta}.$$
 \end{enumerate}
\end{thm}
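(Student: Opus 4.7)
I first substitute the ansatz~\eqref{ansatz-1} into~\eqref{sys-sig}; using the expressions of $\Gamma_1,\Gamma_2$ recalled in Appendix~\ref{appendix}, this reduces $(u,v)$ solving~\eqref{sys-sig} to the stationary system~\eqref{syst} for the profile $(U,V)$. Since $U,V\in\mathcal E$ have finitely many zeros, writing $U=e^{-|z|^2/2}f(z)$ with $f$ entire, the Hadamard factorisation theorem combined with $L^2(\mathbb C)$-integrability forces $f(z)=P(z)e^{a_Uz^2+b_Uz+c_U}$ with $P$ polynomial and $|a_U|<1/2$, and similarly for $V$. A first lemma, obtained by plugging this into~\eqref{syst} and matching the quadratic parts of the exponentials after Bargmann--Fock projection, should show that the squeezing parameters $a_U,a_V$ must vanish. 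This reduces the classification to
$$U=\sum_{n=0}^{d_1}a_n\phi_n^{\gamma_U},\qquad V=\sum_{n=0}^{d_2}b_n\phi_n^{\gamma_V},$$
finite sums in translated Hermite bases, for some $\gamma_U,\gamma_V\in\mathbb C$.

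For part~$(i)$, I would use the equivariance of~\eqref{sys-sig} under a common magnetic translation to reduce to $\gamma_U=0$, so that the system becomes $\lambda U=\Pi(|V|^2U)$ and $-\mu V=\Pi(|U|^2V)$. The right-hand sides can be computed explicitly via the Bargmann--Fock kernel together with the Gaussian identity $-|z|^2+\gamma z+\bar\gamma\bar z=|\gamma|^2-|z-\bar\gamma|^2$, producing closed expressions in terms of four-Hermite integrals. Matching top Hermite coefficients on both sides should first force $V$ to reduce to a single mode $V=Be^{i\theta_2}\phi_{n_2}^{\gamma_V}$, and by a symmetric argument $U=Ae^{i\theta_1}\phi_{n_1}$; plugging back, the remaining equation forces $\gamma_V=0$, since otherwise the two sides would be supported on distinct shifted Gaussians. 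Undoing the initial translation reintroduces an arbitrary common centre $\gamma\in\mathbb C$, and the eigenvalues $\lambda,\mu$ follow from the explicit formula $\int|\phi_{n_1}|^2|\phi_{n_2}|^2\,dL=\tfrac{(n_1+n_2)!}{2^{n_1+n_2+1}\pi\,n_1!\,n_2!}$.

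For part~$(ii)$, I would first extract necessary conditions from~\eqref{rot-Q}--\eqref{rot-P} applied with $\beta=\alpha t$: conservation of $Q_-$ and $P_-$ along the progressive wave imposes $M(U)=M(V)$ and $\bar\alpha\,Q_-(U,V)\in i\mathbb R$. After an analogous argument forces a common centre $\gamma$, a short computation yields the tridiagonal action
$$(\alpha\cdot\Gamma)\phi_n^\gamma=i\alpha\sqrt{n}\,\phi_{n-1}^\gamma-2\,\Im(\alpha\gamma)\,\phi_n^\gamma-i\bar\alpha\sqrt{n+1}\,\phi_{n+1}^\gamma,$$
so $\alpha\cdot\Gamma$ raises the top Hermite index of $U$ by one with amplitude $-i\bar\alpha\sqrt{d_1+1}\,a_{d_1}\neq0$. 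Combined with the degree bound $\deg\Pi(|V|^2U)\le d_1+d_2$ coming from the four-Hermite integrals (and the symmetric bound in the second equation), this forces $d_1=d_2=1$, so that $U,V\in\mathrm{Span}(\phi_0^\gamma,\phi_1^\gamma)$. The problem then reduces to a finite algebraic system for the unknowns $(a_0,a_1,b_0,b_1,\lambda,\mu,\alpha)$, which I would solve explicitly, normalising phases with $M(U)=M(V)$ and $\bar\alpha\,Q_-(U,V)\in i\mathbb R$, to recover the family~\eqref{prog}.

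The hardest step will be the rigidity in part~$(ii)$: showing that $d_1=d_2=1$ and that the centres coincide, together with ruling out the squeezed exponential factor in the preliminary reduction. These rest on a delicate top-coefficient matching in the projected nonlinear terms, exploiting the explicit closed form of integrals of four translated Hermite functions together with the tridiagonal representation of $\alpha\cdot\Gamma$; by contrast, in part~$(i)$ the corresponding matching is much simpler, as there is no creation term on the left-hand side and the degrees of $U,V$ are not coupled to that of $\alpha\cdot\Gamma\,U$.
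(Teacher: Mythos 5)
Your overall strategy is the one the paper follows (Hadamard-type ansatz $P(z)e^{az^2+bz-|z|^2/2}$, killing the squeezing parameter by matching the $z^2$-coefficient after projection, identifying a common centre, symmetry reductions, then mode counting and an explicit algebraic system), and parts (i) and the preliminary reductions are sound. However, there is a genuine gap in the rigidity step of part (ii). Writing $U=\sum a_n\phi_n^\gamma$ and $V=\sum b_n\phi_n^\gamma$ and matching top Hermite indices, the creation term $-i\bar\alpha\sqrt{\,\cdot\,+1}$ on the left against $\Pi(|V|^2U)$ on the right gives $(\text{top index of }V)-(\text{bottom index of }V)=1$, and symmetrically for $U$: each profile is supported on \emph{two consecutive modes} $\{n_1,n_1+1\}$ and $\{n_2,n_2+1\}$, with $n_1,n_2\ge 0$ arbitrary. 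Your claim that the degree bound $\deg\Pi(|V|^2U)\le d_1+d_2$ "forces $d_1=d_2=1$" tacitly assumes the bottom coefficients $a_0,b_0$ are nonzero, which is exactly what must be proved; the inequality alone only yields $d_1,d_2\ge 1$.

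Eliminating $n_1\ge 1$ or $n_2\ge 1$ is the hardest part of the paper's argument and cannot be skipped. One must identify the coefficients of $\phi_{n_1-1}$, $\phi_{n_1+2}$ (and their analogues for $V$), show all four coefficients $a_0,a_1,b_0,b_1$ are nonzero, and extract the arithmetic constraint
\begin{equation*}
(n_1+n_2+1)(n_1+n_2+2)=4(n_1+1)(n_1+2)=4(n_2+1)(n_2+2),
\end{equation*}
which forces $n_1=n_2$ and then has no nonnegative integer solution; the mixed case $n_1\ge 1$, $n_2=0$ is ruled out separately by an incompatibility between the coefficients of $\phi_{n_1-1}$ and $\phi_{n_1+2}$ (a relation of the form $b_0\ov{b_1}/2^{n_1+1}=b_0\ov{b_1}/2^{n_1+3}$ with $\alpha\neq 0$). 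Only after this does the problem reduce to the $6\times 6$ algebraic system on $\mathrm{Span}(\phi_0,\phi_1)$ that you propose to solve, which, normalised by $M(U)=M(V)=1$ and $\ov{\alpha}Q_-(U,V)\in i\R$, yields $|a_0|^2=|b_0|^2=1/4$, $|a_1|^2=|b_1|^2=3/4$ and the stated $\lambda,\mu,\alpha$. You should add the case analysis on the starting indices before claiming the reduction to $\mathrm{Span}(\phi_0^\gamma,\phi_1^\gamma)$.
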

 
  This classification result is in the spirit of \cite[Theorem 6.1]{GGT}. The proof relies on the crucial fact, that an entire function which has a finite number of zeros is a polynomial multiplied by the exponential of an entire function. By adding the fact that we are working in $L^2(\C)$, we obtain a very precise Ansatz and we are able to solve the corresponding system.\medskip
  
The progressive waves~\eqref{prog} moreover satisfy
  \begin{equation*}
  M(U)=M(V)=K^2, \quad \mathcal{H}(U,V)=\frac{11}{64 \pi}K^4,
  \end{equation*}
  \begin{equation}\label{eq00}
Q_-(U,V)= -\frac{\sqrt{3}}2i e^{-i\theta}K^2, \quad P_-(U,V)=\sqrt{3}\Im(\gamma e^{-i\theta})K^2.
\end{equation}

Notice that the speed $\alpha \in \C$ of each traveling wave in Theorem~\ref{classi} is proportional to the square of its size (proportional to its mass).\medskip  

  It is interesting to observe that the  traveling waves defined by~\eqref{prog} have a Gaussian decay, which is not common for a Schr\"odinger-like equation (usually the rate of decay is at most exponential, see e.~g.~\cite{Martel}). We can however mention the   Schr\"odinger equation with logarithmic nonlinearity (logNLS) which has Gaussian solitons  \cite{Caze, Ardila, Car-Galla, Fer1, Fer2} and which possesses several dynamical similarities with~\eqref{sys-sig}. \medskip 
  
 The initial conditions~\eqref{prog} provide explicit examples of solutions to \eqref{sys-sig} with polynomial growth of Sobolev norms and allow to prove that  the bounds of Theorem \ref{thm1.6} are   optimal:
 
 \begin{cor}\label{cor-growth}
 Assume that $(U,V)$ takes the form \eqref{prog} with $K \neq 0$, then corresponding solution $(u,v)$ to~\eqref{sys-sig} satisfies for all $s\geq 0$
 \begin{equation} \label{rate}
\|\<z\>^su(t)\|_{L^{2}(\C)}  \sim \alpha^s_0K^{2s+1}|t|^{s}, \quad \|\<z\>^sv(t)\|_{L^{2}(\C)}  \sim \alpha^s_0K^{2s+1}|t|^{s}, \quad  t\longrightarrow \pm \infty,
\end{equation}
with $\dis \alpha_0= \frac{\sqrt{3}}{32 \pi}$.
 \end{cor}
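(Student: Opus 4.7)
The plan is to use the explicit progressive-wave ansatz \eqref{ansatz-1} together with the mass values from \eqref{eq00} and a simple change of variables, the whole difficulty being only to justify the asymptotic.

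First, I would observe that since the exponential factor $e^{\frac{1}{2}(\overline z\alpha-z\overline\alpha)t}$ is a pure phase (its exponent is purely imaginary), the pointwise modulus of the solution satisfies
\begin{equation*}
|u(t,z)|=|U(z+\alpha t)|,\qquad |v(t,z)|=|V(z+\alpha t)|.
\end{equation*}
Hence, by the change of variables $w=z+\alpha t$,
\begin{equation*}
\|\<z\>^s u(t)\|_{L^2(\C)}^2=\int_{\C}\<w-\alpha t\>^{2s}|U(w)|^2\,dL(w),
\end{equation*}
and likewise for $v$.

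Next, since $U$ is an explicit linear combination of $\phi_0^\gamma$ and $\phi_1^\gamma$, both of which have Gaussian decay, all weighted moments $\int_{\C}\<w\>^{2s}|U(w)|^2\,dL(w)$ are finite. I would then write
\begin{equation*}
\frac{\<w-\alpha t\>^{2s}}{|\alpha t|^{2s}}=\bigl(1+|w-\alpha t|^2\bigr)^s |\alpha t|^{-2s}\xrightarrow[|t|\to\infty]{}1
\end{equation*}
pointwise in $w$, and dominate this quotient uniformly for $|t|\geq 1$ by a constant multiple of $\<w\>^{2s}$, via the elementary inequality $\<w-\alpha t\>^{2s}\lesssim \<w\>^{2s}(1+|\alpha t|^2)^s$. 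Dominated convergence then gives
\begin{equation*}
\frac{\|\<z\>^s u(t)\|_{L^2(\C)}^2}{|\alpha t|^{2s}}\longrightarrow \int_{\C}|U(w)|^2\,dL(w)=M(U)=K^2,
\end{equation*}
where I used the mass computation from \eqref{eq00}.

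Finally, I would plug in $|\alpha|=\frac{\sqrt{3}}{32\pi}K^2=\alpha_0 K^2$ from Theorem~\ref{classi}, obtaining
\begin{equation*}
\|\<z\>^s u(t)\|_{L^2(\C)}\sim \alpha_0^s K^{2s}|t|^s\cdot K=\alpha_0^s K^{2s+1}|t|^s,
\end{equation*}
and similarly for $v$ since $M(V)=K^2$ as well. The only non-routine point is the dominated convergence step, which is harmless because of the Gaussian decay of $U$ and $V$; otherwise the result follows directly from the exact symmetries used to build the progressive waves in the first place.
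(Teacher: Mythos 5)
Your proof is correct and follows essentially the same route as the paper: the paper likewise writes $u(t)=e^{-i\lambda t}R_{\alpha t}U$, changes variables to get $\|\<z\>^s u(t)\|_{L^2(\C)}=\|\<z-\alpha t\>^s U\|_{L^2(\C)}\sim |\alpha|^s|t|^s\|U\|_{L^2(\C)}$, and substitutes $|\alpha|=\alpha_0K^2$ and $M(U)=M(V)=K^2$. Your dominated-convergence argument simply supplies the justification of the asymptotic that the paper states without detail (note only that $M(U)=M(V)=K^2$ is given in the display just before \eqref{eq00}, not in \eqref{eq00} itself).
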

 
 We have $u(t)= e^{-i\lambda t}R_{\alpha t}U $, then the previous bound can be directly obtained from 
  \begin{equation}\label{eq-stat} 
 \|\<z\>^s u(t)\|_{L^{2}(\C)} =\|\<z\>^s R_{\alpha t}U\|_{L^{2}(\C)}= \|\<z-\alpha t \>^s U\|_{L^{2}(\C)} \sim |\alpha|^s |t|^s  \|U\|_{L^{2}(\C)},
   \end{equation} 
when $t \longrightarrow \pm \infty$. Observe that by Theorem~\ref{borne-poly}, such a rate of growth  is excluded for \eqref{sys-sig} when $\sigma=1$, which gives another proof of the non-existence of  progressive waves in the case $\sigma=1$. The result of Corollary~\ref{cor-growth} shows that  growth of Sobolev norms for~\eqref{sys-sig} can occur even with small initial conditions. \medskip

There are some results on the growth of high Sobolev norms for  nonlinear Schr\"odinger equations, for large times. In \cite{HPTV} unbounded orbits were obtained  for NLS on a wave guide, thanks to a modified scattering result (see also  \cite{CKSTT, Hani,GuaKa} for norm inflation phenomena). In~\cite{Car-Galla} logarithmic growth of norms was obtained for logNLS. In~\cite{ANS}, the authors  prove exponential growth of the energy norm for  the harmonic oscillator perturbed by the angular momentum operator (this phenomenon also occurs for linear equations). Concerning  the Szeg\H{o} equation, growth of Sobolev norms was established in~\cite{P2} for the line and  in \cite{GG3} for the circle. See also \cite{Xu} for such results on  the half-wave equation on a wave guide. In \cite{Xu2, Thirouin, Biasi-Evnin}, examples of solutions with optimal exponential growth were obtained for  modifications of Szeg\H{o} equations. \medskip

It is also relevant to compare our results to the case of Schr\"odinger systems of the form
 \begin{equation}\label{sys-ref}
\left\{
\begin{aligned}
&i\partial_{t}u+\Delta u=  |v|^2 u, \quad   (t,x)\in \R\times \R^d \text{ or }    \R\times \T^d \text{ or }    \R\times (\R \times \T^d) ,\\
&i\partial_{t}v+\Delta v=  \sigma |u|^2 v,\\
&u(0,z)=  u_0(x),\; v(0,z)=  v_0(x).
\end{aligned}
\right.
\end{equation}
Most of the techniques developed for NLS can be adapted to study such systems. For global well-posedness results, we refer for instance to  \cite{MaSong}. It is likely that one can obtain polynomial bounds on the Sobolev norms   for \eqref{sys-ref} using the ideas of Sohinger~\cite{Sohinger1, Sohinger2}. A modified scattering result was obtained in \cite{Victor}, and the existence of unbounded orbits (on a wave guide) in the case $\sigma=1$ follows from \cite{HPTV}. In \cite{GPT} a non-linear phenomenon was exhibited on \eqref{sys-ref} posed on $\T$.  Moreover, let us mention that traveling waves solutions (in $H^1$) exist for Schr\"odinger equations on $\R^d$ and for coupled  Schr\"odinger systems as well : see in particular \cite{Ianni-LeCoz, DeleCozWeis} where traveling waves with different speeds are constructed. Notice that such solutions do not infer growth of Sobolev norms in this setting. There exist also traveling waves   nonzero conditions at infinity (see  \cite{CT, ChM} and references therein).  Finally, we refer to  reference~\cite{TzVi} for results on the growth of higher order moments of the linear and the non-linear Schr\"odinger equations with the same rate as in \eqref{rate}, but both   phenomena are different (in the present paper the growth is due to the nonlinearity, while in~\cite{TzVi} the mechanism  also holds for linear equations).\medskip

The results of Theorem~\ref{classi}  and Corollary~\ref{cor-growth} can be used to obtain growth of Sobolev norms of linear equations with time dependent potentials, see~\cite{Thomann}.\medskip

We conclude this paragraph by summing up a few general properties of  progressive waves solution to~\eqref{sys-sig}, of the form
 \begin{equation}\label{ansatz-2}
 \big(u(t,z), v(t,z)\big)=\big(e^{-i\lambda t}U(z+\alpha t) e^{\frac{1}{2}(\overline z \alpha - z \overline{\alpha})t},e^{-i \mu t}V(z+\alpha t) e^{\frac{1}{2}(\overline z \alpha - z \overline{\alpha})t} \big).
 \end{equation}
 
\begin{prop}\label{prop-gen}
Assume that $(u,v)\in L^{2,1}_\E \times L^{2,1}_\E$ is any progressive wave solution to \eqref{sys-sig} of the form \eqref{ansatz-2} with $\alpha \in \C^*$. Then 
\begin{enumerate}[$(i)$]
\item  {$M(U)=M(V)$} ;
\item  $\Re\big(\ov{\alpha} Q_-(U,V)\big)=0$ ; 
\item  $\Im\big(\ov{\alpha} Q_-(U,V)\big) \neq 0$ ; 
\item For $1\leq j\leq 4$, the couples $(u_j,v_j) \in L^{2,1}_\E \times L^{2,1}_\E$ are also  progressive wave solutions to~\eqref{sys-sig}, where 
\begin{align}
&(U_1, V_1)=\big(Ke^{i\theta_1}U, Ke^{i\theta_2}V\big), \quad {\alpha_1}= K^2\alpha  , \quad {\lambda_1}=K^2 \lambda,\quad  {\mu_1}=K^2 \mu\label{u0} \\
&(U_2, V_2)=L_{\theta}\big(U, V \big), \quad {\alpha_2}=\alpha e^{-i\theta}, \quad {\lambda_2}=\lambda,\quad  {\mu_2}=\mu
\label{u1}\\
&(U_3, V_3)=R_{\beta}\big(U,V \big), \quad {\alpha_3}=\alpha , \quad {\lambda_3}=\lambda+2 \Im(\beta \ov{\alpha} ),\quad  {\mu_3}=\mu+ 2 \Im(\beta \ov{\alpha})\label{u2}\\
&(U_4, V_4)=\big(V, U\big), \quad {\alpha_4}=-\alpha  , \quad {\lambda_4}=-\mu,\quad  {\mu_4}=-\lambda \label{u3}
\end{align}
for any $K\geq 0$, $\theta, \theta_1, \theta_2 \in \R$, and $\beta \in \C$\;;
 
\item The speed $\alpha \in \C$ is given by the formula
\begin{equation}\label{form-a}
\alpha =\frac{i}{M(U)}  \int_{\C}   |V|^2  {\partial_{\ov{z}}} (  |U|^2 )      dL:=F(U,V)\;;
\end{equation}
\item The following bound holds true: $\dis |\alpha| \leq \frac{ M(U)}{2\sqrt{2} \pi}$\;;
\item The parameters $\lambda, \mu \in \R$ and $\alpha \in \C$ are related by
$$(\lambda-\mu)M(U)-2i \ov{\alpha}Q_-(U,V)=2 \mathcal{H}(U,V).$$
\end{enumerate}
\end{prop}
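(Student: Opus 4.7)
The seven items will be treated in an order that respects their dependencies: (i) and (ii) from the conservation laws; (iv) by direct verification; (v) from a moment computation, with (vi) and (vii) as short consequences; and (iii), the main obstacle, by contradiction using a second-moment identity.

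\smallskip

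For (i) and (ii), the starting observation is that the phase factors in the ansatz \eqref{ansatz-2} have modulus one, so $|u(t,z)|^2 = |U(z+\alpha t)|^2$ and $|v(t,z)|^2 = |V(z+\alpha t)|^2$, giving $Q_-(u(t),v(t)) = Q_-(R_{\alpha t}U, R_{\alpha t}V)$ and likewise for $P_-$. Plugging $\beta=\alpha t$ into \eqref{rot-Q} and \eqref{rot-P} and equating with the conserved constants, the coefficient of $t^1$ in \eqref{rot-Q} forces $\alpha(M(U)-M(V))=0$, yielding (i) since $\alpha\neq 0$; the $t^2$-term in \eqref{rot-P} then vanishes, and the $t^1$-coefficient gives $\alpha\overline{Q_-(U,V)}+\bar\alpha Q_-(U,V)=2\Re(\bar\alpha Q_-(U,V))=0$, which is (ii). For (iv), each transformation is a direct verification at the PDE level: the scaling and phase \eqref{u0} from cubic homogeneity; \eqref{u1} from the rotation-invariance of $\Pi$; \eqref{u2} from the $\E$-invariance of the equation under magnetic translations together with the Heisenberg commutation $R_\beta R_{\alpha t} = e^{-2it\Im(\bar\alpha\beta)}R_{\alpha t}R_\beta$, whose phase factor produces the shifts $2\Im(\beta\bar\alpha)$ in $\lambda,\mu$; and \eqref{u3} by checking that $(v(-t),u(-t))$ again solves \eqref{sys-sig} (this works precisely because $\sigma=-1$).

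\smallskip

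For (v), I would compute $\frac{d}{dt}\int z|u(t)|^2\,dL$ in two ways. On one hand, from the ansatz and a change of variable, $\int z|u(t)|^2\,dL = q_U - \alpha t\, M(U)$ with $q_U := \int z|U|^2\,dL$, so the derivative at $t=0$ is $-\alpha M(U)$. On the other hand, from the PDE, $\partial_t|u|^2 = -i[\bar u\,\Pi(|v|^2u) - u\,\overline{\Pi(|v|^2u)}]$; the self-adjointness of $\Pi$ together with the identity $\Pi(\bar z U) = U^{(1)}$ (where for $U=e^{-|z|^2/2}f\in\E$ the ``holomorphic derivative'' is $U^{(1)}:=e^{-|z|^2/2}f'\in\E$, easily checked on the basis $(\phi_n)$) and the pointwise identity $\partial_{\bar z}|u|^2 = -z|u|^2 + u\,\overline{u^{(1)}}$ allow one to rewrite $\int z\bar u\,\Pi(|v|^2u)\,dL = \int|v|^2(\partial_{\bar z}|u|^2+z|u|^2)\,dL$; a parallel argument (using $zu\in\E$ so $\Pi(zu)=zu$) gives $\int zu\,\overline{\Pi(|v|^2u)}\,dL = \int z|u|^2|v|^2\,dL$. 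The $z|u|^2|v|^2$-contributions cancel, leaving $\frac{d}{dt}\int z|u|^2\,dL = -i\int|v|^2\partial_{\bar z}|u|^2\,dL$; matching with $-\alpha M(U)$ at $t=0$ gives \eqref{form-a}. Item (vi) then follows by $|\alpha|M(U) \leq \|\partial_{\bar z}|U|^2\|_{L^\infty}M(V)$, using Lemma~\ref{lem.deri} to bound $\|\partial_{\bar z}|U|^2\|_{L^\infty}$ by $\|U\|_{L^\infty}^2$, Proposition~\ref{propLinfini} to bound $\|U\|_{L^\infty}^2\leq M(U)/\pi$, and $M(U)=M(V)$ from (i). For (vii), I would pair the first equation of \eqref{syst} with $\bar U$: writing $\alpha\cdot\Gamma = i\alpha(\partial_z+\bar z/2) - i\bar\alpha z$ and using $\int(\partial_z U+\bar z U/2)\bar U\,dL = \overline{q_U}$ (obtained by integration by parts after writing $U=e^{-|z|^2/2}f$), one gets $\int(\alpha\cdot\Gamma)U\,\bar U\,dL = 2\Im(\bar\alpha q_U)$, hence $\lambda M(U)+2\Im(\bar\alpha q_U) = \mathcal{H}(U,V)$. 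The same manipulation on the second equation (with the extra minus sign from $\sigma=-1$) yields $\mu M(V)+2\Im(\bar\alpha q_V) = -\mathcal{H}(U,V)$. Subtracting, using (i), and then invoking (ii) to replace $2\Im(\bar\alpha Q_-)$ by $-2i\bar\alpha Q_-$, gives (vii).

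\smallskip

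The main obstacle is (iii). The plan is to argue by contradiction: suppose $\Im(\bar\alpha Q_-(U,V))=0$. Together with (ii), $\bar\alpha Q_-(U,V)=0$; since $\alpha\neq 0$ this forces $Q_-(U,V)=0$, i.e., $q_U=q_V$. Using the magnetic translation symmetry \eqref{u2} with $\beta=q_U/M(U)$, one may further assume $q_U=q_V=0$. I would then extract a second-moment identity by pairing \eqref{syst} with $\overline{U^{(1)}}$ and $\overline{V^{(1)}}$ instead of $\bar U$ and $\bar V$. The key algebraic inputs are $\langle U,U^{(1)}\rangle = q_U$, $\langle zU,U^{(1)}\rangle = s_U:=\int z^2|U|^2\,dL$, and $\|U^{(1)}\|_{L^2}^2 = N(U)-M(U)$ (verified on the basis $(\phi_n)$ and extended by linearity), together with the identification $\langle|V|^2U,U^{(1)}\rangle = -i\alpha M(U) + \int z|U|^2|V|^2\,dL$ obtained from \eqref{form-a} and $\partial_{\bar z}|U|^2 = -z|U|^2+U\overline{U^{(1)}}$. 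Plugging in $q_U=0$ leads to
\begin{equation*}
\alpha N(U) - \bar\alpha s_U = -i\int z|U|^2|V|^2\,dL,
\end{equation*}
and the analogous identity on the second equation (with $q_V=0$) reads $\alpha N(V)-\bar\alpha s_V = +i\int z|U|^2|V|^2\,dL$. Adding them yields
\begin{equation*}
\alpha\big(N(U)+N(V)\big) = \bar\alpha\big(s_U+s_V\big).
\end{equation*}
Taking moduli and using $|\alpha|=|\bar\alpha|\neq 0$ forces $N(U)+N(V) = |s_U+s_V|$. But $|s_U|\leq\int|z|^2|U|^2\,dL = N(U)$, with equality only if $z^2$ has constant argument on the essential support of $|U|^2$, i.e., if $U$ is concentrated on a line through the origin; since $U=e^{-|z|^2/2}f$ with $f$ entire and not identically zero is nonzero almost everywhere, this is impossible. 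Therefore the chain $N(U)+N(V)=|s_U+s_V|\leq|s_U|+|s_V|\leq N(U)+N(V)$ must be strict at one step, a contradiction. This completes the plan.
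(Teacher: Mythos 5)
Your items (i), (ii), (iv) and (vii) follow the paper's route (conservation of $Q_-$ and $P_-$ via \eqref{rot-Q}--\eqref{rot-P}, and the commutation relations of Appendix~A) and are correct. For (v) and especially (iii) you take a genuinely different path. For (v), you compute $\frac{d}{dt}\int z|u|^2\,dL$ dynamically and match it with $-\alpha M(U)$ from the ansatz; the paper instead pairs the stationary system with $zU$ and $\ov z U$ after rotating to $\alpha\in\R$ — the two computations are equivalent, and yours avoids the reduction to real $\alpha$. For (iii), the paper solves the linear system coming from the first moments to obtain the closed formula $\alpha = i\,\mathcal H(U,V)\,Q_-(U,V)/\big(2(\Theta(U)+\Theta(V))\big)$ with $\Theta>0$, from which $Q_-\neq 0$ is immediate; you instead argue by contradiction, normalize $q_U=q_V=0$ by a magnetic translation, pair the system with $U^{(1)}, V^{(1)}$ to get $\alpha\big(N(U)+N(V)\big)=\ov{\alpha}\big(s_U+s_V\big)$ with $s_U=\int z^2|U|^2\,dL$, and conclude from the strict inequality $|s_U|<\int|z|^2|U|^2\,dL$ (valid because $|U|^2>0$ off a discrete set). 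I checked the second-moment identities ($\langle U,U^{(1)}\rangle=q_U$, $\langle zU,U^{(1)}\rangle=s_U$, $\|U^{(1)}\|^2=N(U)-M(U)$, and the pairing of $\Pi(|V|^2U)$ with $U^{(1)}$ via \eqref{form-a}); they are correct, and this is a valid alternative proof. Its advantage is that it needs no explicit inversion of a linear system; the paper's formula \eqref{for-a0} has the advantage of exhibiting the sign and size of $\alpha$ in terms of $Q_-$.

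The one genuine gap is item (vi). You estimate $|\alpha|M(U)\leq \|\partial_{\ov z}(|U|^2)\|_{L^\infty(\C)}M(V)$ and then invoke Lemma~\ref{lem.deri}, whose constant is not explicit; this yields $|\alpha|\leq C\,M(U)$ for some unspecified universal $C$, not the stated bound $|\alpha|\leq M(U)/(2\sqrt2\,\pi)$. To get the precise constant you must, as the paper does, use Cauchy--Schwarz to bound the integral in \eqref{form-a} by $\|V\|_{L^4}^2\,\|\partial_{ z}(|U|^2)\|_{L^2}$, then the exact identity of Lemma~\ref{lem.L4}, $\|\partial_{z}(|U|^2)\|_{L^2}^2=\tfrac12\|U\|_{L^4}^4$, and the sharp Carlen constant in \eqref{hypercontract} for $\|\cdot\|_{L^4}\leq\|\cdot\|_{L^2}$. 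With that substitution your proof of (vi) closes; everything else stands.
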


 We  observe that the function $F$ in  \eqref{form-a}  satisfies  $F(R_\beta U,R_\beta V)=F(U,V)$, for any $\beta \in \C$, and $F(L_\theta U,L_\theta V)=e^{-i\theta }F(U,V)$, for any $\theta  \in \R$, which correspond to the symmetries of the problem.

As a consequence of point $(vi)$, the progressive waves found in Theorem~\ref{classi} have almost the maximal speed \big($|\alpha|\sim c_0 M(u)\big)$. Actually, we do not know whether there exist nontrivial progressive waves with an infinite number of zeros (such solution exist in the case $\alpha=0$, see \eqref{chi}). In particular, it would be interesting to see if there exist progressive waves with arbitrary small speed, at fixed $L^2$-norm \big($|\alpha|\ll  M(u)$\big). It is not clear how to use variational methods in this context, because of the lack of control of the~$L^{2,1}$ norm when $\sigma=-1$.  

\begin{rem}
  Assume that $U \in L^{2,1}_\E$ satisfies the equation
\begin{equation}\label{infini}
\lambda U+(\alpha \cdot \Gamma)U =\Pi\big(|U(-z)|^2U(z)\big),
\end{equation}
and denote by $V=L_\pi U$, namely $V(z)=U(-z)$, then $(U,V)$ is the initial condition of a progressive wave with speed $\alpha$ and $\mu=-\lambda$. Indeed by \eqref{r-2}, $V$ satisfies the equation 
\begin{equation*}
-\lambda V+(\alpha \cdot \Gamma)V =-\Pi\big(|U|^2V\big),
\end{equation*}
and by~\eqref{form-a}, the corresponding speed satisfies
\begin{equation*} 
\alpha =\frac{i}{M(U)}  \int_{\C}   |U(-z)|^2  {\partial_{\ov{z}}} \big(  |U(z)|^2 \big)      dL(z).
\end{equation*}
\end{rem}

\subsection{Plan of the paper} The rest of the article is organized as follows: in the last part of this section, we consider more general systems, we recall some harmonic analysis in Bargmann-Fock spaces and we precise some notations. Section~\ref{Sect2} is devoted to  the global existence results and we prove the polynomial bounds on  the solutions of the systems. Finally, in Section~\ref{Sect3} we prove the classification Theorem~\ref{classi} and Proposition~\ref{prop-gen}. We have added two  appendices, the first one contains some technical results and in the second one  we state improved  bounds on the solutions for the cubic LLL equation.

\begin{acknowledgements}
The second author warmly thanks Patrick Grard and Pierre Germain for many discussions on this subject. This work benefited from the previous collaboration~\cite{GGT}.
\end{acknowledgements}

\subsection{On related equations}

\subsubsection{Systems with more general nonlinearities}
It is likely that many of our results   can be adapted to the more general system
 \begin{equation*} 
\left\{
\begin{aligned}
&i\partial_{t}u= \alpha \Pi (|u|^2 u)+ \beta \Pi (|v|^2 u), \quad   (t,z)\in \R\times \C,\\
&i\partial_{t}v=  \gamma \Pi (|v|^2 v)+\sigma \beta \Pi (|u|^2 v),\\
&u(0,z)=  u_0(z),\; v(0,z)=  v_0(z),
\end{aligned}
\right.
\end{equation*}
where $\alpha, \beta, \gamma \in \R$ and $\sigma=\pm 1$. In this case the Hamiltonian reads
$$\mathcal{H}(u,v)=\frac{\alpha}2  \int_{\C}|u|^4 dL+\frac{\sigma \gamma}2  \int_{\C}|v|^4dL+\beta\int_{\C}|u|^2|v|^2dL.$$
The symmetries and the conservation laws of \eqref{sys-sig} also hold for this system. In particular, this excludes the existence of traveling waves in $L^{2,1}_\E$ in the case $\sigma=1$. We did not try to find progressive waves in the case $\sigma=-1$ for  the general system.

\subsubsection{Systems with dispersion}
 Denote by $H$ the harmonic oscillator which is defined by
$$
H = -4\partial_z \partial_{\ov z}+|z|^2.
$$  
This operator plays a key role in the study of Bargmann-Fock spaces and LLL. In particular, the following identity holds true 
\begin{equation}\label{conj}
e^{-itH}\Pi\big(e^{itH} a\, \ov{e^{itH} b}\, e^{itH} c\big)= \Pi\big( a\, \ov{b}\, c\big), \quad  \forall \,a,b,c  \in \E,
\end{equation}
see \cite[Lemma~2.4 and Corollary~2.5]{GHT1}. This identity relies on the particular resonant structure of the nonlinearity, and it is worth noticing  that there is no such relation for  Schr\"odinger equations with polynomial nonlinearities. Therefore, for a given dispersion parameter $\delta \in \R$,  the change of unknown $(\wt{u}, \wt{v}) =e^{-i \delta t H}(u,v)$, shows that  the system~\eqref{sys-sig} is equivalent to
 \begin{equation}\label{sys-conj}
\left\{
\begin{aligned}
&i\partial_{t}\wt{u}- \delta H\wt{u} =  \Pi (|\wt{v}|^2 \wt{u}), \quad   (t,z)\in \R\times \C,\\
&i\partial_{t}\wt{v}-\delta  H\wt{v}=  \sigma   \Pi (|\wt{u}|^2 \wt{v}),\\
&\wt{u}(0,z)=  u_0(z),\; \wt{v}(0,z)=  v_0(z).
\end{aligned}
\right.
\end{equation}
 Moreover, recall that $e^{i\tau H}$ is an isometry of the space
\begin{equation*} 
\HH^{s}(\C) = \big\{ u\in \mathscr{S}'(\C),\; {H}^{s/2}u\in L^2(\C)\big\},
\end{equation*}
and by testing on the complete  family $(\phi_n)_{n\geq 0}$, we can check that $e^{i \tau H}= e^{2i\tau }L_{2\tau}$. 
As a consequence, all results obtained for \eqref{sys-sig} also hold for \eqref{sys-conj}, and in the case $\sigma=-1$, we get the following progressive waves for \eqref{sys-conj}
 \begin{eqnarray*} 
 \big(\wt{u}, \wt{v}\big)&=&\big(e^{-i\lambda t}      e^{-i \delta t H} R_{\alpha t} U , e^{-i\mu t} e^{-i\delta t H} R_{\alpha t} V  \big)\\
 &=&\big(e^{-i(\lambda +2\delta)t}      L_{-2\delta t}R_{\alpha t} U , e^{-i(\mu +2\delta)t}      L_{-2\delta t} R_{\alpha t} V  \big),
 \end{eqnarray*}
 with $\lambda, \mu, \alpha$ and $(U,V)$ given by \eqref{prog}.
 
\subsubsection{A modified LLL equation} As for \eqref{sys-sig} we can show that the equation
 \begin{equation*} 
\left\{
\begin{aligned}
&i\partial_t u(t,z)=\Pi\Big(|u(t,-z)|^2u(t,z)\Big), \quad   (t,z)\in \R\times \C,\\
&u(0,z)=  u_0(z),
\end{aligned}
\right.
\end{equation*}
is globally well-posed in $\E$ as well as in the spaces $L^{2,s}_\E$, $s\geq 0$.  

The energy $\mathcal{H}(u)=\int_{\C}|u(-z)|^2|u(z)|^2dL(z)$ and the mass $M(u)=\int_{\C}|u(z)|^2dL(z)$ are conserved, moreover 
\begin{itemize}
\item[$\bullet$] we can prove the general bound $\| \<z\>^k u(t)\|_{L^2(\C)} \leq C \<t\>^k$ for any initial condition $u_0 \in L^{2,k}_\E$~;
\item[$\bullet$] the initial condition $\dis u_0=\frac12\phi_0+i\frac{\sqrt{3}}2\phi_1$ defines a progressive wave (see  \eqref{infini}). In particular, the previous bound is optimal.
\end{itemize}

\subsection{Analysis in the Bargmann-Fock space and notations} 
 
 We recall here some notations and basic facts which will be useful in the sequel. For more details on the analysis in the Bargmann-Fock space, we refer to the textbook  \cite{Zhu}. \medskip
 
 The harmonic oscillator $H$ is defined by
$$
H = -4\partial_z \partial_{\ov z}+|z|^2.
$$
Denote by $(\phi_n)_{n \geq 0}$ the family of the special Hermite functions given by 
$$
\varphi_n(z) = \frac{1}{\sqrt{\pi n!}} z^n e^{-\frac{|z|^2}{2}}.
$$
The family $(\phi_n)_{n \geq 0}$ forms  a Hilbertian basis of $\mathcal{E}$ (see \cite[Proposition 2.1]{Zhu}),  and the $\phi_n$ are the eigenfunctions of $H$, namely 
$$H\phi_n=2(n+1)\phi_n, \quad n\geq 0.$$
For $\gamma \in \C$, we define 
\begin{equation}\label{defphia}
\phi_n^\gamma(z) = R_{- \overline \gamma} (\phi_n)(z) = \frac{1}{\sqrt{\pi n!}} (z-\overline \gamma)^n e^{-\frac{|z|^2}{2}-\frac{|\gamma|^2}{2} + \gamma z}.
\end{equation} 

The   kernel of  $\Pi$, the orthogonal projection on $\mathcal{E}$, is explicitly given by
\begin{equation*}
K(z,\xi)=\sum_{n=0}^{+\infty}\phi_n(z)\ov{\phi_n}(\xi)=\frac{1}{\pi}e^{\ov{\xi}z}e^{-\vert \xi\vert^2/2}e^{-\vert z\vert^2/2}, \quad   (z,\xi)\in \C\times \C,
\end{equation*} 
and therefore we get the formula 
\begin{equation}\label{defpi}
[\Pi u](z) = \frac{1}{\pi} e^{-\frac{|z|^2}{2}} \int_\mathbb{C} e^{\ov  w z - \frac{|w|^2}{2}} u(w) \,dL(w),
\end{equation}
where $L$ stands for Lebesgue measure on $\C$. With this  formula, we can compute   the following product rule (see \cite[Lemma 8.1]{GHT1})
   \begin{equation}  \label{pi-phi}
\Pi\big(\phi_{n_1}\ov{\phi_{n_2}}\phi_{n_3}\big) = \left\{
\begin{aligned}
& \frac{1 }{2\pi} \frac{(n_1+n_3)!}{2^{n_1+n_3}\sqrt{n_1 !n_2 !n_3 !n_4 !}} \phi_{n_4} &\quad \text{if} \quad n_4:= n_1+n_3-n_2\geq 0 \;\\
& 0&\quad \text{if} \quad n_4:= n_1+n_3-n_2 <0 .
\end{aligned}
\right.
\end{equation}

\medskip
Throughout the paper we use the classical notations $z=x+iy$ and 
$$\partial_z=\frac12(\partial_x-i\partial_y), \qquad \partial_{\ov{z}}=\frac12(\partial_x+i\partial_y).$$ 
We define the enlarged lowest Landau level space as
\begin{equation*} 
\widetilde{\mathcal E}=\Big\{ u(z) = e^{-\frac{|z|^2}{2}} f(z)\,,\;f \; \mbox{entire\ holomorphic}\, \Big\}\cap \mathscr{S}'({\mathbb C})=\Big\{ u\in \mathscr{S}'(\C ), {\pa_{\ov z}}u+\frac	{z}{2}u=0\Big\} \ .
\end{equation*}
It is remarkable that in $\widetilde{\mathcal{E}}$ we have embeddings of $L^p$ spaces. Namely, by Carlen~\cite{Carlen}, for all~$u \in \widetilde{\mathcal{E}}$  the following hypercontractivity estimates hold true
\begin{equation}  \label{hypercontract}
\mbox{if \;$1 \leq p \leq q \leq +\infty$,} \qquad\left( \frac{q}{2\pi} \right)^{1/q} \| u \|_{L^q(\C)} \leq \left( \frac{p}{2\pi} \right)^{1/p} \| u \|_{L^p(\C)}.
\end{equation} 
We refer to Lemma~\ref{lem.hyp} for an elementary proof but without the optimal constants.\medskip

For $s\in \R$, we denote by 
$$L^{2,s}=\big\{u\in \mathscr{S}'(\C), \;\<z\>^su \in L^2(\C)\big\}, \quad \<z\>=(1+|z|^2)^{1/2}$$
 the weighted Lebesgue space and 
 $$L^{2,s}_{\E}=L^{2,s}  \cap \wt{\E}.$$
   For $s \in \mathbb{R}$,  we define the harmonic Sobolev spaces    by 
\begin{equation} \label{def-sobo}
\HH^{s}(\C) = \big\{ u\in \mathscr{S}'(\C),\; {H}^{s/2}u\in L^2(\C)\big\}\cap \wt{\E},
\end{equation}
equipped with the natural norm $ \|u\|_{\HH^s(\C)} =\|H^{s/2} u\|_{L^2(\C)}$. Then, we have $\HH^{s}(\C)  = L^{2,s}_{\E}$ and the following equivalence of norms holds true
\begin{equation*} 
c\|\<z\>^s u\|_{L^2(\C)} \leq  \|u\|_{\HH^s(\C)} \leq C\|\<z\>^s u\|_{L^2(\C)}, \quad \forall\,u \in L^{2,s}_{\E},
\end{equation*}
see~\cite[Lemma~C.1]{GGT} for a proof.  \medskip

In this paper $c,C>0$ denote universal constants the value of which may change from line to line.


 \section{Global existence results and bounds on Sobolev norms}\label{Sect2}

  \subsection{Global existence: proof of Theorem \ref{mainCauchy}}  
  
 The   well-posedness arguments are elementary, therefore we only give the main lines.\medskip
   
$\bullet$  Denote by $U=(u,v)$ and  $f(U)=\big(\Pi(|v|^2u), \sigma\Pi(|u|^2v)\big)$. Then equation~\eqref{sys-sig} is equivalent to 
  $$F(U)=U_0-i\int_{0}^tf(U)(s)ds.$$
  We will  solve this integral equation   thanks to a fixed point argument: define the norms
$$\dis \|U\|_T=\sup_{t\in [0,T]}\big(\|u(t)\|_{L^2(\C)} +\|v(t)\|_{L^2(\C)}  \big) \quad \text{and}\quad  \|U_0\|= \|u_0\|_{L^2(\C)} +\|v_0\|_{L^2(\C)},$$
  then, using the Carlen estimate \eqref{hypercontract}, we get
  \begin{equation*}
  \|F(U)\|_T \leq \|U_0\| +T   \|f(U)\|_T  \leq \|U_0\| +C T   \|U\|^3_T.
  \end{equation*}
  Similarly we obtain the contraction estimates 
    \begin{equation*}
    \|F(U_1)-F(U_2)\|_T \leq C T   \|U_1-U_2\|_T\big(\|U_1\|^2_T+ \|U_2\|^2_T\big).
        \end{equation*}
As a consequence, these estimates allow to perform a fixed point argument in the space
     $$X_{T}= \big\{U\in \mathcal{C}([0,T]; \E \times \E),\;\;   \|U\|_T \leq 2\|U_0\| \big\},$$
 when  $T\leq c \|U_0\|^{-2}$ for some small absolute constant $c>0$.\medskip
     
 $\bullet$     Global existence is obtained thanks to the conservation of the mass: $\|U(T)\|=\|U_0\|$ (since the local time of existence only depends on $\|U_0\|$).\medskip

$\bullet$   Assume that   for some $s>0$, $(\< z\> ^s u_0, \<z\rangle ^s v_0)\in L^2(\C)  \times  L^2(\C )$.  Then we can adapt the previous fixed point argument in weighted spaces, using the estimate 
$$
\| \langle z \rangle^s \Pi \big( a b c\big) \|_{L^2}  \leq C \|\langle z \rangle^s a\|_{L^2} \| b\|_{L^2}\| c\|_{L^2},
$$
(see~\cite[Propositions 3.1 and 3.2]{GGT}). The time of existence obtained in the argument only depends on the $L^2$ norm, hence the solution can be globalized as in the case $s=0$.

  \subsection{Non dispersion: proof of Proposition \ref{propLinfini}}  
The upper bound is a consequence of~\eqref{hypercontract}. Let us prove to lower bound: by the conservation of $\mathcal{H}$ under the flow and the H\"older inequality, 
\begin{equation*}
\mathcal{H}(u_0,v_0)=\mathcal{H}(u,v) \leq \|u\|^2_{L^4} \|v\|^2_{L^4} \leq \|u\|_{L^2}\|u\|_{L^\infty}\|v\|_{L^2}\|v\|_{L^\infty}.
\end{equation*}
Besides, by~\eqref{hypercontract} again, we have $ \|v\|_{L^\infty} \leq \frac{1}{\sqrt{\pi}}\|v\|_{L^2}$, and gathering the previous inequalities we get the result.

  \subsection{Bounds on Sobolev norms}

Let us now turn to the proofs of  Theorem~\ref{borne-poly} and Theorem~\ref{thm1.6}.  The proofs of both results follow the same strategy, but in the case $\sigma=1$ one can take advantage of  the conservation of the $L^{2,1}$-norm and of additional cancellations to improve the bounds. 

\subsubsection{A technical result}
We first state a result which gives a precise description of the derivative of the $L^{2,k}$-norm. Here the notation $W \in \mathcal{C}_c^k(\R \times \R^2, \R)$ means continuity in $t$ of the   derivatives in $(x,y)$ of order less that $k \in \N$.

\begin{lem}\label{lem3.1}
Let $k \in \N$ and let $W \in \mathcal{C}_c^k(\R \times \R^2, \R)$ be a real valued function. Assume that $u\in L^{2,k}_\mathcal{E}$ satisfies
$$i \partial_t u=\Pi \big(W u\big).$$
Then 
\begin{equation}\label{form-deri}
\frac{d}{dt}\int_{\C}|z|^{2k}|u(t,z)|^2dL(z)= -2\sum_{j=1}^k(-1)^j{{k}\choose{j}}\mathfrak{Im} \int_{\C}    z^k    \ov{z}^{k-j}  |u(t,z)|^2 \big( \partial_z^j     W(t,z)\big)dL(z).
\end{equation}
\end{lem}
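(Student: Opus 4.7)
My plan is to differentiate in $t$, use the self-adjointness of $\Pi$ on $L^2(\C)$ to shift it onto the weight $|z|^{2k}$, compute $\Pi(|z|^{2k}u)$ in closed form from the Bargmann-Fock representation of $u$, and finally integrate by parts in $\pa_{\ov z}$ to transfer the derivatives onto $W$.

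The equation $i\pa_t u=\Pi(Wu)$, together with the reality of $W$ (so that $\int_\C W|u|^2 dL$ is real), directly gives
$$\frac{d}{dt}\int_\C |z|^{2k}|u|^2\,dL = 2\,\Im\int_\C |z|^{2k}\,\ov u\,\Pi(Wu)\,dL.$$
Since $\Pi=\Pi^*$ on $L^2(\C)$ and $u=\Pi u$, this equals $2\,\Im\int_\C Wu\,\ov{\Pi(|z|^{2k}u)}\,dL$.

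To evaluate $\Pi(|z|^{2k}u)$, I write $u(z)=e^{-|z|^2/2}G(z)$ with $G$ entire. Using formula~\eqref{defpi} and the identity $\ov w^k e^{\ov w z}=\pa_z^k e^{\ov w z}$, I pull $\pa_z^k$ out of the defining integral for $\Pi$; the remaining integral equals $\Pi(w^k u)(z)=z^k u(z)$ because $w^k u\in\E$, giving
$$\Pi(|z|^{2k}u)(z)=e^{-|z|^2/2}\,\pa_z^k\bigl(z^k G(z)\bigr),\qquad \ov{\Pi(|z|^{2k}u)}=e^{-|z|^2/2}\,\pa_{\ov z}^k\bigl(\ov z^k\,\ov G\bigr).$$
The integrand becomes $We^{-|z|^2}G\cdot\pa_{\ov z}^k(\ov z^k\ov G)$, and since $\pa_{\ov z}G=0$ while $\pa_{\ov z}e^{-|z|^2}=-ze^{-|z|^2}$, induction yields $\pa_{\ov z}^k(WGe^{-|z|^2})=Ge^{-|z|^2}(\pa_{\ov z}-z)^k W$. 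Hence $k$ integrations by parts in $\pa_{\ov z}$ (no boundary terms thanks to the compact support of $W$) produce
$$\int_\C Wu\,\ov{\Pi(|z|^{2k}u)}\,dL = (-1)^k\int_\C (\pa_{\ov z}-z)^k W\cdot \ov z^k\,|u|^2\,dL.$$

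To conclude, I expand $(\pa_{\ov z}-z)^k=\sum_{j=0}^k\binom{k}{j}(-z)^{k-j}\pa_{\ov z}^j$ (the two operators commute), use $\pa_{\ov z}^j W=\ov{\pa_z^j W}$ (as $W$ is real), and take imaginary parts. The $j=0$ contribution is the real integrand $|z|^{2k}W|u|^2$ and vanishes under $\Im$; combining with $\Im(\ov X)=-\Im(X)$, the remaining terms reassemble with the correct signs into exactly the right-hand side of~\eqref{form-deri}. The main technical point is the closed-form evaluation of $\Pi(|z|^{2k}u)$; once this is established, everything else is bookkeeping of signs in the integration by parts.
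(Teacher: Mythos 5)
Your proof is correct and follows essentially the same route as the paper's: both hinge on the closed-form evaluation $\Pi(|z|^{2k}u)=e^{-|z|^2/2}\partial_z^k(z^k G)$ (the paper writes this as $(\partial_z+\frac{\ov z}{2})^k(z^k u)$, obtained by iterating $\Pi(\ov z u)=(\partial_z+\frac{\ov z}{2})\Pi u$), followed by self-adjointness of $\Pi$, $k$ integrations by parts transferring the derivatives onto $W$ against the Gaussian weight, and a binomial expansion in which the $j=0$ term drops out by reality of $W$. Working with the conjugate expression and $\partial_{\ov z}$ instead of $\partial_z$ is only a cosmetic difference, and your sign bookkeeping checks out.
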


\begin{proof} Let us first show that for $u \in \mathscr{S}'(\C)$, 
     \begin{equation}\label{piu} 
\Pi\big(\ov{z}u\big)=\big(\partial_z+\frac{\ov{z}}{2}\big)\Pi u.
\end{equation}
We compute
\begin{eqnarray*}
\partial_z  \Pi u(z) &=&-  \frac{1}{\pi}\frac{\ov{z}}2 e^{-\frac{|z|^2}{2}} \int_\mathbb{C} e^{\ov  w z - \frac{|w|^2}{2}} u(w) \,dL(w)+ \frac{1}{\pi}  e^{-\frac{|z|^2}{2}} \int_\mathbb{C} e^{\ov  w z - \frac{|w|^2}{2}} \ov{w}u(w) \,dL(w)\\
&=&-\frac{\ov{z}}{2}\Pi u(z)+\Pi\big(\ov{z}u\big)(z),
\end{eqnarray*}
hence the result. Next, by commutation of the operators $\partial_z$ and $\ov{z}$ we get, for any $u\in\wt{\E}$ and $k\in \N$
  \begin{equation} \label{zk}
\Pi\big(|z|^{2k}u\big)=\big(\partial_z+\frac{\ov{z}}{2}\big)^k(z^ku).
\end{equation}
Now, we compute
\begin{eqnarray*}
\frac{d}{dt}\int_{\C}|z|^{2k}|u|^2dL(z)&=& 2\mathfrak{Re} \int_{\C} |z|^{2k} \ov{u}\partial_t u dL(z)\\
&=& 2\mathfrak{Im} \int_{\C} |z|^{2k} \ov{u}\Pi(Wu) dL(z)\\
&=& -2\mathfrak{Im} \int_{\C} \Pi(|z|^{2k}u) W\ov{u} dL(z).
\end{eqnarray*}
Then  by \eqref{zk},  and writing  $u(z)=f(z)e^{-\frac{|z|^2}{2}}$,
\begin{eqnarray*}
\frac{d}{dt}\int_{\C}|z|^{2k}|u|^2dL(z)&=&-2\mathfrak{Im} \int_{\C}\Big[\big(\partial_z+\frac{\ov{z}}{2}\big)^k(z^ku)\Big]W\ov{u} dL(z)\\
&=& -2\mathfrak{Im} \int_{\C}z^ku\big(-\partial_z+\frac{\ov{z}}{2}\big)^k(W\ov{u}) dL(z)\\
&=& -2\mathfrak{Im} \int_{\C}z^k|f|^2e^{-\frac{|z|^2}{2}}\big(-\partial_z+\frac{\ov{z}}{2}\big)^k(W e^{-\frac{|z|^2}{2}}) dL(z).
\end{eqnarray*}
For all $k\geq 0$, we have
$$\big(-\partial_z+\frac{\ov{z}}{2}\big)^k(We^{-\frac{|z|^2}{2}} )=e^{-\frac{|z|^2}{2}}   \big(-\partial_z+\ov{z}\big)^k W=e^{-\frac{|z|^2}{2}}  \sum_{j=0}^k  {{k}\choose{j}} \ov{z}^{k-j} (-\partial_z)^j W, $$
where the binomial formula    can be used since  $\partial_z$ and  $\ov{z}$ commute. Finally, from the previous line we deduce
\begin{equation*}
\frac{d}{dt}\int_{\C}|z|^{2k}|u(t,z)|^2dL(z)= -2\sum_{j=1}^k(-1)^j  {{k}\choose{j}} \mathfrak{Im} \int_{\C}   z^k    \ov{z}^{k-j}  |u(t,z)|^2 \big( \partial_z^j     W(t,z)\big)dL(z),
\end{equation*}
(the contribution $j=0$ vanishes since $W$ is real), which was the claim.
\end{proof}

\subsubsection{Proof of Theorem~\ref{thm1.6}}
We apply Lemma~\ref{lem3.1} with $W=|v|^2$, hence
\begin{eqnarray*}
\frac{d}{dt}\int_{\C}|z|^{2k}|u|^2dL(z)&=&   -2\sum_{j=1}^k(-1)^j {{k}\choose{j}}\mathfrak{Im} \int_{\C}   z^k    \ov{z}^{k-j}  |u|^2 \partial_z^j     (|v|^2)dL(z)   \\
 &\leq &C \max_{1\leq j \leq k}\big\|\partial_z^j     (|v|^2)\big\|_{L^\infty(\C)} \int_{\C}  \<z\>^{2k-1} |u|^2dL(z) \\
 &\leq &C \|v\|^2_{L^\infty(\C)} \int_{\C}  \<z\>^{2k-1} |u|^2dL(z),
\end{eqnarray*}
where in the last line we used Lemma~\ref{lem.deri}. Finally, thanks to the H\"older inequality we deduce that 
\begin{eqnarray*} 
\frac{d}{dt} \big\| \<z\>^ku\big\|^2_{L^2(\C)} &\leq &C \|v\|^2_{L^2(\C)}  \|u\|^{\frac 1k}_{L^2(\C)}    \big\| \<z\>^ku\big\|^{2-\frac1{k}}_{L^2(\C)} \\
&\leq & C \|v_0\|^2_{L^2(\C)}   \|u_0\|^{\frac 1k}_{L^2(\C)} \big\| \<z\>^ku\big\|^{2-\frac1{k}}_{L^2(\C)},
\end{eqnarray*}
which in turn implies 
\begin{eqnarray*} 
 \big\| \<z\>^ku(t)\big\|_{L^2(\C)} &\leq &\Big( \big\| \<z\>^ku_0\big\|^{\frac1k}_{L^2(\C)}+C \|v_0\|^2_{L^2(\C)}   \|u_0\|^{\frac 1k}_{L^2(\C)} |t| \Big)^k\\
 &\leq &  \big\| \<z\>^ku_0\big\|_{L^2(\C)} \big(1+C \|v_0\|^2_{L^2(\C)}  |t| \big)^k, \nonumber
\end{eqnarray*}
hence the result.  Similarly, we get  
$$ \big\| \<z\>^kv(t)\big\|_{L^2(\C)}  \leq   \big\| \<z\>^kv_0\big\|_{L^2(\C)} \big(1+C \|u_0\|^2_{L^2(\C)}  |t| \big)^k,$$ which completes the proof. \medskip

The proof of \eqref{borneck} is postponed to Paragraph~\ref{para-2.2.4}.

\subsubsection{Proof of Theorem~\ref{borne-poly}}

We apply \eqref{form-deri} with $W=|v|^2$ for the first part and  $W=|u|^2$ for the second, then 
\begin{equation}\label{derk2}
\frac{d}{dt}\int_{\C}|z|^{2k}\big(|u|^2+|v|^2\big)dL=   -2\sum_{j=1}^k(-1)^j {{k}\choose{j}}\mathfrak{Im} \int_{\C}   z^k    \ov{z}^{k-j}  \big(|v|^2 \partial_z^j     (|u|^2) + |u|^2 \partial_z^j     (|v|^2)\big)dL.
\end{equation} 

$(i)$ To begin with, we consider the case $k\geq 4$.\medskip

$\bullet$ Let us look more carefully at the contribution $j=1$. We have 
\begin{eqnarray}
\mathfrak{Im} \int_{\C}   z^k    \ov{z}^{k-1}  \big(|v|^2 \partial_z     (|u|^2)+|u|^2 \partial_z     (|v|^2)\big)dL  &= & \mathfrak{Im} \int_{\C}   z^{k}    \ov{z}^{k-1}   \partial_z     (|u|^2|v|^2)dL \nonumber\\
 &= &- k\mathfrak{Im} \int_{\C}   |z|^{2(k-1)}       |u|^2|v|^2dL\nonumber\\
 &=&0.\label{ipp2}
 \end{eqnarray}

$\bullet$ Let us consider  the contribution $j=2$. We have 
\begin{equation}\label{ordre2b}
|z|^2 \partial^2_z(|u|^2)=\partial^2_{z}\big(|zu|^2\big)-2\ov{z}\partial_z(|u|^2),
\end{equation}
thus the contribution for $j=2$ reads
\begin{multline*}
 \mathfrak{Im} \int_{\C}   z^k \ov{z}^{k-2} \Big(    |v|^2 \partial^2_z     (|u|^2)+  |u|^2 \partial^2_z     (|v|^2)  \Big) dL=\\
 =\mathfrak{Im} \int_{\C}   z^{k-1}    \ov{z}^{k-3}  \Big(  |v|^2 \partial^2_{z}\big(|zu|^2\big)+  |u|^2 \partial^2_{z}\big(|zv|^2\big)\Big)dL\\-2  \mathfrak{Im} \int_{\C}   z^{k-1}    \ov{z}^{k-2}   \Big( |v|^2 \partial_z(|u|^2)+ |u|^2 \partial_z(|v|^2)\Big)dL.
 \end{multline*}
 The first part can be controlled by
\begin{eqnarray*}
\big|\mathfrak{Im} \int_{\C}   z^{k-1}    \ov{z}^{k-3}  |v|^2 \partial^2_{z}\big(|zu|^2\big) dL \big| &\leq  &\big\| \partial^2_z\big(|zu|^2\big)\big\|_{L^\infty(\C)}    \int_{\C}   \<z\>^{2k-4}    |v|^2     dL   \\
   &\leq & C   \Vert \langle z\rangle u_0\Vert^2_{L^2(\C)}   \Big(\int_{\C}  \<z\>^{2k} |v|^2dL\Big)^{1-\frac{2}{k-1}} \Big(\int_{\C}  \<z\>^{2} |v|^2dL\Big)^{\frac{2}{k-1}} \\
      &\leq & C   \Vert \langle z\rangle u_0\Vert^{2}_{L^2(\C)}\Vert \langle z\rangle v_0\Vert^{\frac4{k-1}}_{L^2(\C)}   \Big(\int_{\C}  \<z\>^{2k} |v|^2dL\Big)^{1-\frac{2}{k-1}}, 
 \end{eqnarray*}
and same for the analogous term:
\begin{equation*}
\big|\mathfrak{Im} \int_{\C}   z^{k-1}    \ov{z}^{k-3}  |u|^2 \partial^2_{z}\big(|zv|^2\big) dL \big| \leq  C   \Vert \langle z\rangle v_0\Vert^{2}_{L^2(\C)}\Vert \langle z\rangle u_0\Vert^{\frac4{k-1}}_{L^2(\C)}   \Big(\int_{\C}  \<z\>^{2k} |u|^2dL\Big)^{1-\frac{2}{k-1}} .
 \end{equation*}
 The remaining part vanishes, as in \eqref{ipp2}
\begin{equation*}
 \mathfrak{Im} \int_{\C}   z^{k-1}    \ov{z}^{k-2}  \Big(|v|^2  \partial_z(|u|^2)+|u|^2  \partial_z(|v|^2)\Big)dL  =0.
 \end{equation*}

$\bullet$ For the contributions $j=3$  we write (using \eqref{ordre2b})
$$|z|^2 \partial^3_z(|u|^2)=\partial^3_{z}\big(|zu|^2\big)-3\ov{z}\partial^2_z(|u|^2),$$
hence, by Lemma~\ref{lem.deri}
\begin{multline*}
\big|\mathfrak{Im} \int_{\C}   z^{k}    \ov{z}^{k-3}  |v|^2 \partial^3_{z}\big(|u|^2\big) dL \big| \leq  \\
\begin{aligned}
&\leq \big\| \partial^3_z\big(|zu|^2\big)\big\|_{L^\infty(\C)}    \int_{\C}   \<z\>^{2k-5}    |v|^2     dL  +3 \big\| \partial^2_z\big(|u|^2\big)\big\|_{L^\infty(\C)}    \int_{\C}   \<z\>^{2k-4}    |v|^2     dL  \\
      &\leq C    \Vert \langle z\rangle u_0\Vert^{2}_{L^2(\C)}\Vert \langle z\rangle v_0\Vert^{\frac4{k-1}}_{L^2(\C)}    \Big(\int_{\C}  \<z\>^{2k} |v|^2dL\Big)^{1-\frac{2}{k-1}} .
 \end{aligned}
 \end{multline*}~

$\bullet$ The contributions $4 \leq j\leq k$ can directly be controlled.\medskip

As a consequence, if we set $X_k(t)=\Vert \langle z\rangle ^ku(t)\Vert _{L^2(\C)}+\Vert \langle z\rangle ^kv(t)\Vert _{L^2(\C)}$
\begin{eqnarray*} 
\frac{d}{dt} \Big(\big\| \<z\>^ku\big\|^2_{L^2(\C)}+\big\| \<z\>^kv\big\|^2_{L^2(\C)} \Big) & \leq & CX^{2+\frac4{k-1}}_1(0)   \Big(\big\| \<z\>^ku\big\|^{2-\frac{4}{k-1}} _{L^2(\C)}+\big\| \<z\>^kv\big\|^{2-\frac{4}{k-1}} _{L^2(\C)}\Big) \\
& \leq & CX^{2+\frac4{k-1}}_1(0)  \Big(\big\| \<z\>^ku\big\| ^2_{L^2(\C)}+\big\| \<z\>^kv\big\|^2_{L^2(\C)}\Big)^{1-\frac{2}{k-1}}  ,
\end{eqnarray*}
which in turn implies after integration
\begin{equation*} 
 \big\| \<z\>^ku(t)\big\|^2_{L^2(\C)}+ \big\| \<z\>^kv(t)\big\|^2_{L^2(\C)}  \leq C X^2_k(0) \big(1+X^2_1(0) |t| \big)^{\frac{k-1}2},
\end{equation*}
and then 
\begin{equation}\label{borne-expl}
 \big\| \<z\>^ku(t)\big\|_{L^2(\C)}+ \big\| \<z\>^kv(t)\big\|_{L^2(\C)}  \leq C X_k(0) \big(1+X^2_1(0) |t| \big)^{\frac{k-1}4}.
\end{equation}~

$(ii)$ Now we consider the case $k=2$. By \eqref{derk2} and~\eqref{ipp2} we have 
\begin{equation*}
\frac{d}{dt}\int_{\C}|z|^{4}\big(|u|^2+|v|^2\big)dL=   -2 \mathfrak{Im} \int_{\C}   z^2     \big(  |v|^2 \partial_z^2 (|u|^2)+ |u|^2 \partial_z^2 (|v|^2)\big)dL,
\end{equation*} 
thus 
\begin{eqnarray*}
\frac{d}{dt}\int_{\C}|z|^{4}\big(|u|^2+|v|^2 \big)dL & \leq &  2  \big( \big\| \partial^2_z\big(|u|^2\big)\big\|_{L^\infty(\C)}   \Vert \langle z\rangle v\Vert^2_{L^2(\C)} + \big\| \partial^2_z\big(|v|^2\big)\big\|_{L^\infty(\C)}   \Vert \langle z\rangle u\Vert^2_{L^2(\C)}\big)\\
& \leq & C \big\| \<z\> u_0\big\|^2_{L^2(\C)}\big\| \<z\> v_0\big\|^2_{L^2(\C)}  ,
\end{eqnarray*} 
which implies the announced bound. \medskip

$(iii)$  Finally we consider the case  $k=3$. By \eqref{derk2} and \eqref{ipp2} we have 
\begin{multline*}
\frac{d}{dt}\int_{\C}|z|^{6}\big(|u|^2+|v|^2\big)dL =  \\
=  -6 \mathfrak{Im} \int_{\C}   z^3 \ov{z}     \big( |v|^2 \partial_z^2     (|u|^2)+|u|^2 \partial_z^2     (|v|^2)\big)dL+
 2 \mathfrak{Im} \int_{\C}   z^3    \big(  |v|^2 \partial_z^3     (|u|^2)+ |u|^2 \partial_z^3     (|v|^2)\big)dL.
\end{multline*} 
The first term can be controlled as in the case $k\geq 4$. For the second one, we write 
\begin{multline*}
\big|\mathfrak{Im} \int_{\C}   z^3    \big(  |v|^2 \partial_z^3     (|u|^2)+ |u|^2 \partial_z^3     (|v|^2)\big)dL \big| \leq  \\
\begin{aligned}
& \leq \big\| z^2 \partial^3_z\big(|u|^2\big)\big\|_{L^\infty(\C)} \Vert \langle z\rangle v\Vert_{L^2(\C)}  \Vert   v\Vert_{L^2(\C)} +  \big\| z^2 \partial^3_z\big(|v|^2\big)\big\|_{L^\infty(\C)} \Vert \langle z\rangle u\Vert_{L^2(\C)}  \Vert   u\Vert_{L^2(\C)}\\
& \leq C\Vert \langle z\rangle u\Vert^2_{L^2(\C)}\Vert \langle z\rangle v\Vert^2_{L^2(\C)}
 \end{aligned}
 \end{multline*}~
 by Lemma~\ref{lem.deri} and the hypercontractivity estimates~\eqref{hypercontract}.
\medskip

$(iv)$  Estimate \eqref{newB2} is obtained by interpolation between the cases $k=1$ and $k=3$.

\subsubsection{Proof of the bound \eqref{borneck}}\label{para-2.2.4}
We proceed as in the proof of Theorem \ref{borne-poly}. First, we apply \eqref{form-deri} with $W=|v|^2$ for $v$, then
\begin{equation*}
\frac{d}{dt}\int_{\C}|z|^{2k}|u|^2dL=   -2\sum_{j=1}^k(-1)^j {{k}\choose{j}}\mathfrak{Im} \int_{\C}   z^k    \ov{z}^{k-j}  |u|^2 \partial_z^j     (|v|^2)dL.
\end{equation*} 
Similarly, we use \eqref{form-deri} with $W=-|u|^2$ and get
\begin{equation*}
\frac{d}{dt}\int_{\C}|z|^{2k}|v|^2dL=   -2\sum_{j=1}^k(-1)^j {{k}\choose{j}}\mathfrak{Im} \int_{\C}   z^k    \ov{z}^{k-j}  |v|^2 \partial_z^j     (-|u|^2)dL,
\end{equation*} 
so that
\begin{equation}\label{derk3}
\frac{d}{dt}\int_{\C}|z|^{2k}\big(|u|^2-|v|^2\big)dL=   -2\sum_{j=1}^k(-1)^j {{k}\choose{j}}\mathfrak{Im} \int_{\C}   z^k    \ov{z}^{k-j}  \big(|v|^2 \partial_z^j     (|u|^2) + |u|^2 \partial_z^j     (|v|^2)\big)dL.
\end{equation} 
$(i)$ To begin with, we consider the case $k\geq 3$.\medskip

$\bullet$ Let us look at the contribution $j=1$. We have just as \eqref{ipp2}
\begin{equation}\label{ipp3}
\mathfrak{Im} \int_{\C}   z^k    \ov{z}^{k-1}  \big(|v|^2 \partial_z     (|u|^2)+|u|^2 \partial_z     (|v|^2)\big)dL  = 0.
 \end{equation}

$\bullet$ For the contributions $2 \leq j \leq k$  we write by Lemma~\ref{lem.deri} and Theorem \ref{thm1.6} :
\begin{multline*}
\big|\mathfrak{Im} \int_{\C}   z^k    \ov{z}^{k-j}  \big(|v|^2 \partial_z^j     (|u|^2) + |u|^2 \partial_z^j     (|v|^2)\big)dL \big| \leq  \\
\begin{aligned}
&\leq \big\| \partial^j_z\big(|u|^2\big)\big\|_{L^\infty(\C)}    \int_{\C}   \<z\>^{2k-j}    |v|^2     dL  + \big\| \partial^j_z\big(|v|^2\big)\big\|_{L^\infty(\C)}    \int_{\C}   \<z\>^{2k-j}    |u|^2     dL  \\
      &\lesssim   \big(1+|t|\big)^{2k-2}.
 \end{aligned}
 \end{multline*}
 
By the binomial formula, we write 
\begin{equation*}
\big\| \<z\>^ku\big\|^2_{L^2(\C)}-\big\| \<z\>^kv\big\|^2_{L^2(\C)}= \sum_{j=0}^k{{k}\choose{j}}\int_{\C}|z|^{2j}\big(|u|^2-|v|^2\big)dL.
\end{equation*}
As a consequence,
\begin{equation*} 
\frac{d}{dt} \Big(\big\| \<z\>^ku\big\|^2_{L^2(\C)}-\big\| \<z\>^kv\big\|^2_{L^2(\C)} \Big) \lesssim  \big(1+|t|\big)^{2k-2},
\end{equation*}
which in turn implies after integration
\begin{equation*} 
 \Big| \big\| \<z\>^ku\big\|^2_{L^2(\C)}-\big\| \<z\>^kv\big\|^2_{L^2(\C)} \Big| \lesssim  \big(1+|t|\big)^{2k-1},
\end{equation*}
as announced.

$(ii)$ Now we consider the case $k=2$. By \eqref{derk3} and \eqref{ipp3} we have 
\begin{equation*}
\frac{d}{dt}\int_{\C}|z|^{4}\big(|u|^2-|v|^2\big)dL=   -2 \mathfrak{Im} \int_{\C}   z^2     \big(  |v|^2 \partial_z^2 (|u|^2)+ |u|^2 \partial_z^2 (|v|^2)\big)dL,
\end{equation*} 
thus 
\begin{eqnarray*}
\frac{d}{dt}\int_{\C}|z|^{4}\big(|u|^2-|v|^2 \big)dL & \lesssim &  \big\| \partial^2_z\big(|u|^2\big)\big\|_{L^\infty(\C)}   \Vert \langle z\rangle v\Vert^2_{L^2(\C)} + \big\| \partial^2_z\big(|v|^2\big)\big\|_{L^\infty(\C)}   \Vert \langle z\rangle u\Vert^2_{L^2(\C)}\\
& \lesssim & \big(1+|t|\big)^{2},
\end{eqnarray*} 
by Theorem \ref{thm1.6}, which implies the announced bound.


\section{Progressive waves with a finite number of zeros}\label{Sect3}

\subsection{The classification result: proof of Theorem \ref{classi}} Assume that $(U,V)$ satisfies the system~\eqref{syst}. We  adopt here the same strategy as in \cite[Section 6]{GGT} (see also \cite{AftaSerfa} for a similar approach in the context of the LLL equation on lattices).
\medskip 

 {\underline    {Step 1 : the Ansatz.}}  We write $U(z) = f(z)e^{-\frac{1}{2}|z|^2}$ and $V(z) =g(z) e^{-\frac{1}{2}|z|^2}$,  where $f$ and $g$ are entire functions which have a finite number of zeros. By  the argument of \cite[Section 6.3, Step~1]{GGT},   $U$ and $V$ take the form
 $$U(z)=P_1(z)e^{A_1z^2+B_1 z-\frac12|z|^2}, \quad V(z)=P_2(z)e^{A_2z^2+B_2 z-\frac12|z|^2},$$
  where $P_1$ and $P_2$ are polynomials, $deg P_1=m_1$, $deg P_2=m_2$ and $A_1, A_2, B_1, B_2$ are complex numbers. Moreover, since $U,V \in L^2(\C)$, we deduce that  ${|A_1|<\frac12}$ and ${|A_2|<\frac12}$.  \medskip
  
  {\underline    {Step 2 :  $A_1=A_2=0$.}} In the sequel we will use  the formula
\begin{equation*}
\frac{1}{\pi} \int e^{-2 |w|^2 + aw + b\overline{w} + cw^2 + d\overline{w^2}}\,dL(w) = \frac{1}{2 \sqrt{1-cd}} e^{\frac{da^2+cb^2+2ab}{4(1-cd)}},
\end{equation*}
which holds true   for any complex numbers $a,b,c,d$ such that the integral converges absolutely (the proof is elementary by computing Gaussian integrals). This identity implies, for a polynomial $P$ of $w$ and $\overline{w}$
\begin{equation}\label{inte-poly}
\frac{1}{\pi} \int e^{-2 |w|^2 + aw + b\overline{w} + cw^2 + d\overline{w^2}}P(w,\overline{w})\,dL(w) =  \frac{1}{2 \sqrt{1-cd}} P(\partial_a,\partial_b)e^{\frac{da^2+cb^2+2ab}{4(1-cd)}}.
\end{equation}
Therefore (see \cite[equality (6.9)]{GGT} for a similar formula),
\begin{align*}
\Pi (|V|^2 U) (z) & = \frac{e^{-\frac{|z|^2}{2}}}{\pi} \int e^{-2|w|^2 + z\overline{w} + (A_1+A_2) w^2 + \overline{A_2w^2} + (B_1+B_2)w + \overline{B_2 w}} P_1(w)  P_2(w) \ \overline{P_2(w)}\,dL(w) \\
& = \left.  \frac{e^{-\frac{|z|^2}{2}}}{2 \sqrt{1-cd}}  P_1(\partial_a)P_2(\partial_a) \overline{P_2}(\partial_b) e^{\frac{da^2+cb^2+2ab}{4(1-cd)}} \right|_{\substack{a = B_1+B_2 \\ b = z+\overline{B_2} \\ c = A_1+A_2 \\ d = \overline{A_2}}}. 
\end{align*}
Now, let  $(U,V)$ satisfy the system \eqref{syst}. We identify the coefficient of $z^2$ in the exponential and we get 
    \begin{equation} \label{4.1}
\left\{
\begin{aligned}
&\frac{A_1+A_2}{4\big(1-(A_1+A_2)\ov{A_2}\big)}=A_1\\
&\frac{A_1+A_2}{4\big(1-(A_1+A_2)\ov{A_1}\big)}=A_2.
\end{aligned}
\right.
\end{equation}
 We multiply the first line by $\ov{A_2}$ and the second by $\ov{A_1}$ and therefore 
     \begin{equation}  \label{4.2}
\left\{
\begin{aligned}
&\frac{(A_1+A_2)\ov{A_2}}{4\big(1-(A_1+A_2)\ov{A_2}\big)}=A_1 \ov{A_2}\\
&\frac{(A_1+A_2)\ov{A_1}}{4\big(1-(A_1+A_2)\ov{A_1}\big)}=A_2 \ov{A_1}.
\end{aligned}
\right.
\end{equation}
  Denote by $X=A_1 \ov{A_2}$ and let us show that $X\in \R$.
 $$\frac{X+|A_2|^2}{4\big(1-|A_2|^2-X \big)}=X\quad  \Leftrightarrow  \quad     4X^2+(4 |A_2|^2-3  )X+ |A_2|^2=0.$$
This latter equation has real roots if and only if its discriminant 
$$\Delta=(2|A_2|+1)(2|A_2|-1)(2|A_2|+3)(2|A_2|-3) \geq 0\; ,$$
which is the case under the assumption $|A_2|  <\frac12$, hence $X\in \R$. Next, set $x=(A_1+A_2)\ov{A_2}$ and $y=(A_1+A_2)\ov{A_1}$ (which are real numbers) and satisfy
$$x-|A_2|^2=\dis \frac{x}{4(1-x)}=\frac{y}{4(1-y)}=y-|A_1|^2,$$
 by \eqref{4.2}. By injectivity of the function $x \mapsto \frac{x}{4(1-x)}$, this implies $x=y$ and then $|A_1|=|A_2|$. As a consequence $A_1=A_2=Ae^{i\phi}$ or $A_1=-A_2=Ae^{i\phi}$ for some $A\geq 0$ and $\phi \in \R$.

$\bullet$ Assume that $A_1=A_2=Ae^{i\phi}$, then plugging into \eqref{4.1} yields $\dis \frac{A}{2(1-2A^2)}=A$, hence $A=0$ since $A<1/2$.

$\bullet$ Assume that $A_1=-A_2=Ae^{i\phi}$, then we also get $A=0$. \medskip

  {\underline    {Step 3 :  Reduction to the case $B_1=B_2=0$.}} We identify the coefficient of $z$ in the exponential and we get ${B_1=B_2:=\gamma}$.
  
  As a consequence, $U,V$ take the form (recall the notation \eqref{defphia})
  $$U=\sum_{k=n_1}^{m_1}\widetilde{a}_k \phi_k^{\gamma}=R_{-\ov{\gamma}} U_0, \quad V=\sum_{k=n_2}^{m_2}\widetilde{b}_k \phi_k^{\gamma}=R_{-\ov{\gamma}} V_0.$$
  By \eqref{r-1} we get
   \begin{equation*}  
\left\{
\begin{aligned}
& \lambda U+ (\alpha \cdot  \Gamma) U = \Pi (|V|^2 U), \\
& \mu V+ (\alpha \cdot  \Gamma) V = - \Pi (|U|^2 V),
\end{aligned}
\right.
\end{equation*}
if and only if 
   \begin{equation*}  
\left\{
\begin{aligned}
& \lambda_0 U_0+ (\alpha \cdot  \Gamma) U_0 = \Pi (|V_0|^2 U_0),  \\
& \mu_0 V_0+ (\alpha \cdot  \Gamma) V_0 =- \Pi (|U_0|^2 V_0),
\end{aligned}
\right.
\end{equation*}
where $\dis \lambda_0=\lambda-2\Im({\alpha}{\gamma})$ and $\dis \mu_0=\mu-2\Im({\alpha} {\gamma})$. This allows to reduce to the case $\gamma=0$. \medskip

 {\underline    {Step 4 : The case $\alpha=0$.}}
For $\alpha = 0$, the system reads
 \begin{equation*}  
\left\{
\begin{aligned}
& \lambda U_0 = \Pi (|V_0|^2 U_0), \\
& \mu V_0 = - \Pi (|U_0|^2 V_0).
\end{aligned}
\right.
\end{equation*}
We write   $$U_0=\sum_{k=n_1}^{m_1}{a}_k \phi_k, \quad V_0=\sum_{k=n_2}^{m_2}{b}_k \phi_k.$$
With the help of \eqref{pi-phi}, we   identify the highest degrees in the system, and we get $m_1+m_2-n_2=m_1$ and $m_1+m_2-n_1=m_2$, thus $m_1=n_1$ and $m_2=n_2$. So we have $U_0=Ae^{i\theta_1}\phi_{n_1},V_0=Be^{i\theta_2}\phi_{n_2}$ with $A,B \geq 0$, $\theta_1,\theta_2 \in \R$. Next, by computing, one  can obtain 
$$ \lambda = \frac{(n_1+n_2)!}{2^{n_1+n_2+1}\pi n_1! n_2!}B^2, \quad \mu= - \frac{(n_1+n_2)!}{2^{n_1+n_2+1}\pi n_1! n_2!}A^2. $$\medskip

 {\underline    {Step 5 : Case $\alpha \neq 0$, \Bk reduction to the case $\alpha_2=0$  and $\alpha_1 > 0$.}} From now on, we assume that $\alpha \neq 0$. Let us write $(U_0,V_0)=L_\theta(U_1,V_1)$ for some $\theta \in \R$. By~\eqref{r-2} we get that the couple $(U_1,V_1)$ satisfies the system
   \begin{equation*}  
\left\{
\begin{aligned}
& \lambda_0 U_1+\big((e^{i\theta }\alpha) \cdot  \Gamma\big)U_1   = \Pi (|V_1|^2 U_1),  \\
& \mu_0 V_1+ \big((e^{i\theta }\alpha) \cdot  \Gamma\big)V_1  =- \Pi (|U_1|^2 V_1).
\end{aligned}
\right.
\end{equation*}
We have $\alpha=|\alpha|  e^{i \rho}$, then with the choice $\theta =-\rho$ we get the system 
   \begin{equation*}  
\left\{
\begin{aligned}
& \lambda_0 U_1+ |\alpha| \Gamma_1 U_1  = \Pi (|V_1|^2 U_1),  \\
& \mu_0 V_1+ |\alpha| \Gamma_1    V_1   =- \Pi (|U_1|^2 V_1).
\end{aligned}
\right.
\end{equation*}

  {\underline    {Step 6 :  Reduction of the Ansatz.}} We write 
  $$U_1=\sum_{k=n_1}^{m_1}{a}_k \phi_k, \quad V_1=\sum_{k=n_2}^{m_2}{b}_k \phi_k.$$
Then, we compute
$$\Gamma_1 \phi_0=-i \phi_1, \quad \Gamma_1 \phi_n=i(-\sqrt{n+1} \phi_{n+1}+\sqrt{n}\phi_{n-1}), \quad n\geq 1.$$
We identify the highest degrees in the system and get $m_1+m_2-n_2=m_1+1$ and $m_1+m_2-n_1=m_2+1$, thus $m_1=n_1+1$ and $m_2=n_2+1$. As a consequence, $U_1,V_1$ take the form
  $$U_1= {a}_0 \phi_{n_1}+a_1 \phi_{n_1+1}, \quad V_1={b}_0 \phi_{n_2}+b_1 \phi_{n_2+1}.$$
Using the action of the phase rotations $T_{\theta_1,\theta_2 }$, we can restrict to the case $a_0 \geq 0$ and $b_0 \geq 0$.
From \eqref{rot-Q} we necessarily have $M(U_1)=M(V_1)$, and by a rescaling in time, we can assume that $M(U_1)=M(V_1)=1$, thus 
$$|a_0|^2+|a_1|^2=1, \quad |b_0|^2+|b_1|^2=1.$$
By \eqref{rot-P}, there exists $\beta \in \R$ such that $Q_-(U_1,V_1)=i\beta$, and therefore
\begin{equation}\label{Q}
Q_-(U_1,V_1)=a_0\ov{a_1} \sqrt{n_1+1}-b_0\ov{b_1} \sqrt{n_2+1}=i\beta.
\end{equation}

  {\underline    {Step 7 :  Writing and solving the system.}}   By \eqref{pi-phi} we can write the expansion
 \begin{multline*}
  |V_1|^2U_1=\\
 \begin{aligned}
&=a_0|b_0|^2  |\phi_{n_2}|^2 \phi_{n_1}+a_1|b_0|^2  |\phi_{n_2}|^2 \phi_{n_1+1}+a_0b_0 \ov{b_1}   \phi_{n_2} \ov{\phi_{n_2+1}}\phi_{n_1}+a_1b_0 \ov{b_1}   \phi_{n_2} \ov{\phi_{n_2+1}}\phi_{n_1+1}\\ 
&\quad+a_0  \ov{b_0}b_1   \phi_{n_2+1} \ov{\phi_{n_2}}\phi_{n_1}+a_1  \ov{b_0}b_1   \phi_{n_2+1} \ov{\phi_{n_2}}\phi_{n_1+1}+a_0|b_1|^2  |\phi_{n_2+1}|^2 \phi_{n_1}+ a_1|b_1|^2  |\phi_{n_2+1}|^2 \phi_{n_1+1} ,
 \end{aligned}
  \end{multline*}
 and thanks to \eqref{pi-phi} we get
  \begin{multline}
 \Pi( |V_1|^2U_1)=\\
 \begin{aligned}
&=\frac{(n_1+n_2)! a_0b_0 \ov{b_1} }{\pi 2^{n_1+n_2+1}  \sqrt{n_1 ! n_2 ! (n_2+1) ! (n_1-1) ! }} {\bf 1}_{n_1 \geq 1} \phi_{n_1-1}    + \\
&\quad +\frac{1}{\pi 2^{n_1+n_2+1} }\big(     \frac{(n_1+n_2)!a_0|b_0|^2}{n_1!n_2!}    +  \frac{(n_1+n_2+1)!a_1b_0 \ov{b_1} }{2\sqrt{(n_1+1)!n_2! (n_2+1)!n_1!}}+\frac{(n_1+n_2+1)!a_0|b_1|^2}{2{n_1!(n_2+1)! }}  \big)\phi_{n_1} \\
&\quad +\frac{1}{\pi 2^{n_1+n_2+1} }\big(     \frac{(n_1+n_2+1)!a_1|b_0|^2}{2(n_1+1)!n_2!}    +  \frac{(n_1+n_2+1)!a_0 \ov{b_0}b_1 }{2\sqrt{(n_1+1)!n_2! (n_2+1)!n_1!}}+\frac{(n_1+n_2+2)!a_1|b_1|^2}{4{(n_1+1)!(n_2+1)! }}  \big)\phi_{n_1+1} \\
&\quad +\frac{(n_1+n_2+2)! a_1  \ov{b_0}b_1}{\pi 2^{n_1+n_2+3}  \sqrt{(n_1+1) ! (n_2+1) ! n_2  ! (n_1+2) ! }} \phi_{n_1+2}.   
 \end{aligned}
  \end{multline}
  
  $\bullet$ Assume that $n_1 \geq 1$ and $n_2\geq 1$. We identify the coefficients of $\phi_{n_1-1}$ and $\phi_{n_2-1}$
    \begin{equation}\label{syst1}  
\left\{
\begin{aligned}
& \alpha a_0i \sqrt{n_1}  =\frac{(n_1+n_2)! a_0  b_0 \ov{b_1}}{\pi 2^{n_1+n_2+1}  \sqrt{n_1 ! n_2  ! (n_2+1) !  (n_1-1) ! }}    \\
&\alpha b_0i \sqrt{n_2}  =-\frac{(n_1+n_2)! a_0   \ov{a_1}b_0}{\pi 2^{n_1+n_2+1}  \sqrt{n_1 ! n_2  ! (n_1+1) !  (n_2-1) ! }},
\end{aligned}
\right.
\end{equation}
and the coefficients of $\phi_{n_1+2}$ and $\phi_{n_2+2}$
    \begin{equation}  \label{syst2} 
\left\{
\begin{aligned}
& -\alpha a_1i \sqrt{n_1+2}  =\frac{(n_1+n_2+2)! a_1   \ov{b_0}b_1}{\pi 2^{n_1+n_2+3}  \sqrt{(n_1+1) ! (n_2+1)  ! n_2  !  (n_1+2) ! }}    \\
&-\alpha b_1i \sqrt{n_2+2}  =-\frac{(n_1+n_2+2)!     \ov{a_0}a_1 b_1}{\pi 2^{n_1+n_2+3}   \sqrt{(n_1+1) ! (n_2+1)  ! n_1  !  (n_2+2) ! }} .
\end{aligned}
\right.
\end{equation}
We first show that $a_0\neq 0 $, $a_1\neq 0 $, $b_0\neq 0 $, $b_1\neq 0 $. Suppose that one of them equals $0$, for instance $a_0=0$. We get:
$$\alpha b_0i \sqrt{n_2}  = 0  \quad \text{and} \quad \alpha b_1i \sqrt{n_2+2}  = 0 .$$
  Since $\alpha\neq0$, it gives $b_0=b_1=0$. That yields a contradiction with $|b_0|^2+|b_1|^2=1$. We show similarly that $a_1\neq0, b_0\neq 0$, and $b_1\neq0$. 
  
From \eqref{syst1} and \eqref{syst2} we then deduce that 
$$ (n_1+n_2+2) (n_1+n_2+1)=4(n_1+1)(n_1+2)=4(n_2+1)(n_2+2).$$
The last equality implies $n_1=n_2=n$. This in turn implies $2(n+1)(2n+1)=4(n+1)(n+2)$, and this latter equation has no solution.
\medskip

 $\bullet$ Assume that $n_1 \geq 1$ and $n_2=0$. We identify the coefficients of $\phi_{n_1-1}$ and $\phi_{n_1+2}$
    \begin{equation}\label{systn}  
\left\{
\begin{aligned}
& \alpha a_0i \sqrt{n_1}  =\frac{n_1! a_0  b_0 \ov{b_1}}{\pi 2^{n_1+1}  \sqrt{n_1 !   (n_1-1) ! }}  ,  \\
& -\alpha a_1i \sqrt{n_1+2}  =\frac{(n_1+2)! a_1   \ov{b_0}b_1}{\pi 2^{n_1+3}  \sqrt{(n_1+1) !    (n_1+2) ! }},
\end{aligned}
\right.
\end{equation}
and the identification of the coefficients of $\phi_{2}$ gives: 
$$   i \alpha \sqrt{2}b_1=\frac{(n_1+2)! b_1   \ov{a_0}a_1}{\pi 2^{n_1+3}  \sqrt{2(n_1+1) !    n_1 ! }}.$$
First, we show that $a_0 \neq 0$ and $a_1 \neq 0$. Assume that $a_0= 0$, then the previous equation implies $b_1=0$ which in turn implies $a_1=0$. This is a contradiction with  $|a_0|^2+|a_1|^2=1$.   Similarly  we prove that  $a_1 \neq 0$. From  system \eqref{systn} we deduce 
$$  i\alpha =   \frac{     b_0\ov{b_1}}{\pi 2^{n_1+1}   } =\frac{     b_0\ov{b_1}}{\pi 2^{n_1+3}   }, $$
and this equation has no solution since $\alpha\neq 0$. The case $n_1 =0$ and $n_2\geq 1$ is similar.  \medskip

 $\bullet$ Assume that $n_1 =0$ and $n_2=0$. The identification of the coefficients of $\phi_0$, $\phi_1$ and $\phi_2$ yields the system 
     \begin{equation}  \label{46}
\left\{
\begin{aligned}
& \lambda_0 a_0 +i \alpha a_1=\frac1{4\pi } \big(2a_0 |b_0|^2+a_0 |b_1|^2+a_1b_0 \ov{b_1}\big)      \\
& \mu_0  b_0 +i \alpha b_1=-\frac1{4\pi } \big(2b_0 |a_0|^2+b_0 |a_1|^2+a_0\ov{a_1}b_1 \big)  \\
& \lambda_0  a_1 -i \alpha a_0=\frac1{4\pi } \big(a_1|b_0|^2+a_1|b_0|^2+a_0\ov{b_0} b_1  \big)  \\
& \mu_0  b_1 -i \alpha b_0=-\frac1{4\pi } \big(b_1|a_0|^2+b_1|a_1|^2+\ov{a_0} a_1b_0  \big) \\
&   -i \alpha \sqrt{2}a_1=\frac{a_1\ov{b_0} b_1}{4\sqrt{2}\pi } \\
&   i \alpha \sqrt{2}b_1=\frac{\ov{a_0} a_1b_1}{4\sqrt{2}\pi } .
\end{aligned}
\right.
\end{equation}

  Let us first show that $a_0\neq 0 $, $a_1\neq 0 $, $b_0\neq 0 $, and  $b_1\neq 0 $. Suppose that $a_0=0$. We get $ i \alpha \sqrt{2}b_1=0$ so $b_1=0$ since $\alpha\neq0$. Then, we replace $b_1=a_0=0$ in the first line of \eqref{46} and get  $a_1=0$. This yields a contradiction with $|a_0|^2+|a_1|^2=1$. The other cases are similar. \medskip

Then using that 
$|a_0|^2+|a_1|^2=1$ and $|b_0|^2+|b_1|^2=1$, we get the system
  \begin{equation}  \label{47}
\left\{
\begin{aligned}
& \lambda_0 a_0 +i \alpha a_1=\frac1{4\pi } \big(a_0  +a_0 |b_0|^2+a_1b_0 \ov{b_1}\big)      \\
& \mu_0  b_0 +i \alpha b_1=-\frac1{4\pi } \big(b_0+b_0 |a_0|^2+ a_0\ov{a_1}b_1 \big)  \\
& \lambda_0  a_1 -i \alpha a_0=\frac1{4\pi } \big(a_1 +a_0\ov{b_0} b_1  \big)  \\
& \mu_0  b_1 -i \alpha b_0=-\frac1{4\pi } \big(b_1 +\ov{a_0} a_1b_0  \big) \\
&   i \alpha =\frac{{b_0} \ov{b_1}}{8\pi }=-\frac{{a_0} \ov{a_1}}{8\pi } .
\end{aligned}
\right.
\end{equation}
Recall that \eqref{Q} implies $a_0\ov{a_1}  -b_0\ov{b_1}  =i\beta$, and thus 
$${a_0} \ov{a_1}=-i 8\pi \alpha =- {b_0} \ov{b_1}, \quad \beta=-16 \pi \alpha.$$
 The third line of \eqref{47} implies 
     \begin{equation*}
4\pi\lambda_0 a_1 - 4\pi i a_0 \alpha  = a_1+a_0\ov{b_0}b_1  = a_1(1-|a_0|^2)  = a_1|a_1|^2.
\end{equation*}
Recall from step 4 that $\alpha>0$, then $\alpha = \ov{\alpha}=\frac{-i}{8\pi}a_1\ov{a_0}$, then the previous line yields  (using $a_1\neq 0$)  
     \begin{equation}\label{E1}
4\pi\lambda_0 = |a_1|^2+\frac{|a_0|^2}{2}.
\end{equation}
 By the first line of  \eqref{47}, we also have 
\begin{equation*}
4\pi\lambda_0 a_0 +4\pi i \alpha a_1  = a_0+|b_0|^2a_0+a_1\ov{b_1}b_0  = a_0 (1+|b_0|^2-|a_1|^2)  = a_0 (|b_0|^2+|a_0|^2).
\end{equation*}
So, by using the expression of $\alpha$ and simplifying by $a_0\neq0$ :
     \begin{equation}\label{E2}
4\pi\lambda_0 - \frac{|a_1|^2}{2} = |b_0|^2 + |a_0|^2.
\end{equation}
By combining \eqref{E1} and \eqref{E2} one can obtain 
     \begin{equation*}
|a_0|^2+|b_0|^2=\frac{1}{2} \quad \text{and thus} \quad |a_1|^2+|b_1|^2=\frac{3}{2}.
\end{equation*}
From $a_1\ov{a_0}=-b_1\ov{b_0}$ we get $|b_1|^2|b_0|^2=|a_1|^2|a_0|^2$ which reads: 
     \begin{equation*}
|b_1|^2(1-|b_1|^2)  =|a_1|^2(1-|a_1|^2)  = (\frac{3}{2}-|b_1|^2)(|b_1|^2-\frac{1}{2})  = -|b_1|^4+2|b_1|^2-\frac{3}{4}.
\end{equation*}
That is to say $|b_1|^2={3}/{4}$, and we deduce that $|a_1|^2={3}/{4}$ and $|a_0|^2=|b_0|^2={1}/{4}$.
The equation~\eqref{E1} yields 
$\dis \lambda_0 = {7}/{(32\pi)}.$
One can show similarly that 
$\dis \mu_0 = -{7}/{(32\pi)}.$
In  step 6 we showed that we can restrict to the case $a_0 \geq 0$ and $b_0 \geq 0$, so we have 
$$ a_0 = b_0 = \frac{1}{2}, \quad  \alpha = \frac{\sqrt{3}}{32\pi}, \quad a_1=\frac{\sqrt{3}}{2}i,\quad b_1=-\frac{\sqrt{3}}{2}i.$$
As a conclusion, we get 
$\dis U_1=  \frac12 \phi_0 +\frac{\sqrt{3}}2 i  \phi_{1}, \;\;  V_1= \frac12 \phi_0 -\frac{\sqrt{3}}2i   \phi_{1},$
and the general progressive waves are given by  
$$(U,V)=\big(K e^{i\theta_1} R_{-\ov{\gamma}}L_{\theta}(U_1),K e^{i\theta_2} R_{-\ov{\gamma}}L_{\theta}(U_1)\big), \quad K\geq 0, \quad  \theta, \theta_1, \theta_2 \in \R, \quad \gamma \in \C,$$
namely
 \begin{equation*} 
 U= Ke^{i\theta_1} \big(\frac12 \phi_0^\gamma +\frac{\sqrt{3}}2 ie^{i\theta} \phi_{1}^\gamma      \big), \quad 
 V= Ke^{i\theta_2} \big(\frac12 \phi_0^\gamma -\frac{\sqrt{3}}2i e^{i\theta} \phi_{1}^\gamma      \big),
\end{equation*}
with 
$$ \lambda = \frac{K^2}{32\pi}\big(7+2\sqrt{3}\Im(\gamma e^{-i\theta})\big), \quad \mu = \frac{K^2}{32\pi}\big(-7+2\sqrt{3}\Im(\gamma e^{-i\theta})\big), \quad \alpha = \frac{\sqrt{3}}{32\pi}K^2e^{-i\theta}.$$  
Denote by  $Q(U)=\int_{\C}z|U(z)|^2dL(z)$ and $P(U)=\int_{\C}(|z|^2-1)|U(z)|^2dL(z)$. Then we compute 
$$Q(U)= K^2(\ov{\gamma}-i \frac{\sqrt{3}}4 e^{-i\theta}),\quad Q(V)= K^2(\ov{\gamma}+i \frac{\sqrt{3}}4 e^{-i\theta}), $$
$$ P(U)= K^2\big(\frac34+|\gamma|^2+  \frac{\sqrt{3}}2  \Im(\gamma e^{-i\theta})    \big), \quad P(V)=K^2\big(\frac34+|\gamma|^2-  \frac{\sqrt{3}}2  \Im(\gamma e^{-i\theta})    \big),$$
\begin{equation*}
\mathcal{H}(U,V)=K^4\mathcal{H}(U_1,V_1)= \frac{11}{64 \pi}K^4,\quad \mathcal{H}(U,U)=\mathcal{H}(V,V)= \frac{23}{64 \pi}K^4,
\end{equation*}
hence \eqref{eq00}.

\begin{rem}
For both values of $\sigma \in \{-1,1\}$, there exist explicit stationary solutions ($\alpha=0)$ with an infinite number of zeros (see~\cite[Proposition A.1]{GGT}), for instance
\begin{equation}\label{chi}
(U,V)=\Big(\frac{\sinh(\gamma z)}{\sqrt{\pi \sinh(|\gamma |^2)}}\, e^{-\frac{|z|^2}{2}},\frac{\sinh(\gamma z)}{\sqrt{\pi \sinh(|\gamma |^2)}}\, e^{-\frac{|z|^2}{2}}\Big),\quad \forall \gamma \in \C^*,
\end{equation}
with $\lambda=\frac1{4\pi}$ and $\mu=\frac{\sigma}{4\pi}$. We do not know such examples in the case $\alpha \neq 0$.
\end{rem}

 \subsection{Proof of Proposition~\ref{prop-gen}} Assume that $(u,v)$ is a nontrivial progressive wave with initial condition $(U,V) \in L^{2,1}_{\E} \times  L^{2,1}_{\E} $, then  $(U,V)$ satisfies the system
  \begin{equation}\label{sys-0}
\left\{
\begin{aligned}
& \lambda U+ (\alpha \cdot  \Gamma) U = \Pi (|V|^2 U),  \\
& \mu V+ (\alpha \cdot  \Gamma) V = - \Pi (|U|^2 V).
\end{aligned}
\right.
\end{equation}
 \quad $(i)$ and $(ii)$ are consequences of the conservation of $P_{-}$ (see \eqref{rot-P} in the case $\sigma=-1$).

$(iv)$ The result \eqref{u3} is obvious. Assume that $(U, V)$ satisfies~\eqref{sys-0} and let us show \eqref{u1}   : by~\eqref{r-2} we get $\lambda L_\theta U + (\alpha e^{-i\theta}) \cdot \Gamma L_\theta U =\Pi\big( |L_\theta V|^2 L_\theta U\big)$, and similarly for the equation in $V$. To show \eqref{u2}, we use \eqref{r-1} to deduce that  $\big(\lambda -2 \Im(\alpha \ov{\beta}) \big) R_\beta U + (\alpha \cdot \Gamma) R_\beta U =\Pi\big( |R_\beta V|^2 R_\beta U\big)$.

$(v)$ We first assume that $\alpha \in \R$, then  $(U, V)$ satisfies the system
  \begin{equation} \label{sys-1}
\left\{
\begin{aligned}
& \lambda U+ \alpha    \Gamma_1 U = \Pi (|V|^2 U),  \\
& \mu V+ \alpha    \Gamma_1  V = - \Pi (|U|^2 V).
\end{aligned}
\right.
\end{equation}
We set (with $z=x+iy$)
\begin{equation*} 
\dis I(U)=\int_{\C} y   |U(z)|^2dL(z)=- \frac i2\int_{\C} (z-\ov{z})  |U(z)|^2dL(z) ,
\end{equation*}
\begin{equation*} 
\dis J(U)=\int_{\C} y^2   |U(z)|^2dL(z)=-\frac14 \int_{\C} (z-\ov{z})^2 |U(z)|^2dL(z),
\end{equation*}
\begin{equation*} 
\dis K(U)=\int_{\C} xy   |U(z)|^2dL(z)=-\frac{i}4 \int_{\C} (z^2-\ov{z}^2) |U(z)|^2dL(z).
\end{equation*}

$\bullet$ We take the scalar product of the first line of \eqref{sys-1} with $U$ : write $U(z)=f(z)e^{-\frac12|z|^2}  $, then we integrate by parts and get 
  \begin{eqnarray*}  
    \int_{\C}  \ov{U} \Gamma_1 U dL&=&i  \int_{\C} \ov{f(z)} \big(-zf(z)+\partial_z f(z)\big)e^{-|z|^2}dL\\
  & =&-i\int_{\C}\big(z -\ov{z}  \big) |f(z)|^2 e^{-|z|^2} dL\\
  &=&2\int_{\C}y |U(z)|^2dL =  2I(U). 
    \end{eqnarray*}   
 We proceed similarly with $V$ and get the system
 \begin{equation} \label{cond1}
\left\{
\begin{aligned}
& \lambda M(U)+ 2{\alpha}I(U)   = \mathcal{H}(U,V),  \\
& \mu M(V)+ 2{\alpha} I(V)  = -\mathcal{H}(U,V).
\end{aligned}
\right.
\end{equation}

$\bullet$ Now we take the scalar product of the first line of \eqref{sys-1} with $zU$    (and similarly for $V$) : we have
 \begin{equation}  \label{scal1}
  \int_{\C}  \ov{zU} \, \Pi (|V|^2 U)dL= \int_{\C}  \ov{z} |U|^2  |V|^2dL,
  \end{equation}
where we used that $zU \in \E$. Then, by integrating by parts 
  \begin{eqnarray}  
    \int_{\C}  \ov{zU}\, \Gamma_1 U dL &=&i  \int_{\C} \ov{zf(z)} \big(-zf(z)+\partial_z f(z)\big)e^{-|z|^2} dL\nonumber \\
     &=&-i\int_{\C}\big(|z|^2 -(\ov{z})^2  \big) |f(z)|^2 e^{-|z|^2} dL\nonumber \\
     &=&2K(U)-2iJ(U), \label{scal2} 
    \end{eqnarray}   
    hence 
   \begin{equation}  \label{cond2}
\left\{
\begin{aligned}
& \lambda Q(U)+ 2 \alpha K(U)+2i\alpha J(U)  = \int_{\C}  z |U|^2  |V|^2dL,  \\
&  \mu  Q(V)+  2 \alpha K(V)+2i\alpha J(V)   = - \int_{\C}  z |U|^2  |V|^2dL.
\end{aligned}
\right.
\end{equation}

$\bullet$ We take the scalar product of the first line of \eqref{sys-1} with $\ov{z}U$    (and similarly for $V$) : we have
 \begin{eqnarray*}   
  \int_{\C}  z\ov{U} \, \Pi (|V|^2 U)dL&= &\int_{\C}   \Pi (|V|^2 U) \ov{   \Pi (\ov{z} U) }   dL\\
  &=&  \int_{\C}   |V|^2 U\, \ov{  \partial_{z}U }   dL + \frac12\int_{\C}  z |U|^2 |V|^2   dL \\
  &=&  \int_{\C}   |V|^2  {\partial_{\ov{z}}} (  |U|^2 )      dL +  \int_{\C}  z |U|^2 |V|^2   dL,
  \end{eqnarray*}
where we used that $\Pi(\ov{w}v)(z) =  \partial_z \Pi v(z)+ \frac{\ov z}2 \Pi v(z)$. Next, by integrating by parts 
  \begin{eqnarray*}  
    \int_{\C} z \ov{U}\, \Gamma_1 U dL &=&i  \int_{\C} z\ov{f(z)} \big(-zf(z)+\partial_z f(z)\big)e^{-|z|^2} dL\nonumber \\
     &=&i\int_{\C}\big(|z|^2-z^2 -1  \big) |f(z)|^2 e^{-|z|^2} dL\nonumber \\
     &=&2iJ(U)+2K(U)-iM(U), 
    \end{eqnarray*}   
    hence 
   \begin{equation}  \label{cond3}
\left\{
\begin{aligned}
& \lambda Q(U)+ 2 \alpha K(U)+2i\alpha J(U)-i\alpha M(U)  =     \int_{\C}   |V|^2  {\partial_{\ov{z}}} (  |U|^2 )      dL +  \int_{\C}  z |U|^2 |V|^2   dL ,  \\
&  \mu  Q(V)+  2 \alpha K(V)+2i\alpha J(V)-i\alpha M(V)   =  - \int_{\C}   |U|^2  {\partial_{\ov{z}}} (  |V|^2 )      dL -  \int_{\C}  z |U|^2 |V|^2   dL.
\end{aligned}
\right.
\end{equation}  
Therefore, by   \eqref{cond2} and   \eqref{cond3} we get (observe that $M(U)=M(V)$)
 \begin{equation}  \label{def-a}
\alpha =\frac{i}{M(U)}  \int_{\C}   |V|^2  {\partial_{\ov{z}}} (  |U|^2 )      dL:=F(U,V).
 \end{equation}   
The general case $\alpha \in \C$ follows from the identities $F(L_\theta U,L_\theta V)=e^{-i\theta } F( U,  V)$ and $\alpha(L_\theta U,L_\theta V)=e^{-i\theta } \alpha( U,  V)$.

$(iii)$ Assume that $\alpha \in \R$. We take the imaginary part of  \eqref{cond2} and we obtain the system 
 \begin{equation}  \label{cond4}
\left\{
\begin{aligned}
& \lambda I(U) +2\alpha J(U)  = \int_{\C} y   |U(z)|^2|V(z)|^2 dL,  \\
&  \mu   I(V)+ 2\alpha J(V)  = -\int_{\C} y   |U(z)|^2|V(z)|^2 dL.
\end{aligned}
\right.
\end{equation}
We finally solve the system \eqref{cond1}-\eqref{cond4}  and get the formula
\begin{equation} \label{for-a0}
\alpha =- \frac{\mathcal{H}(U,V) \Im Q_-(U,V)}{2\big(\Theta(U)+\Theta(V)\big)}= i \frac{\mathcal{H}(U,V)  Q_-(U,V)}{2\big(\Theta(U)+\Theta(V)\big)},
\end{equation}
 where $\Theta(U)=J(U)M(U)-(I(U))^2$. Observe that $\Theta(U)>0$  for any $U \neq 0$ by the Cauchy-Schwarz inequality. 
 When $\alpha \in \R$, the result then follows from \eqref{for-a0}. For the general case $\alpha \in \C^*$ we can also obtain a similar formula by considering  the action of $L_{\theta}$.

$(vi)$ By \eqref{def-a}, Lemma~\ref{lem.L4} and the Carlen inequality \eqref{hypercontract}, 
 \begin{equation*}
 |\alpha|\leq   \frac{ \|V\|^2_{L^4} \| \partial_z(|U|^2)\|_{L^2}   }{\|U\|^2_{L^2} }   \leq \frac{ \|U\|^2_{L^4}\|V\|^2_{L^4} }{\sqrt{2}\|U\|^2_{L^2} }  \leq \frac{ \|U\|^2_{L^2} }{2\sqrt{2} \pi},
\end{equation*}
hence the result.\medskip

$(vii)$ We make the difference between the two lines in \eqref{cond1} and we get the result for $\alpha \in \R$ (recall that from $(ii)$ we have $\Re\big(  Q_-(U,V)\big)=0$). The general case is obtained using the action of~$L_{\theta}$, as in item $(v)$.
 

 \appendix
 
 \section{Some technical results}\label{appendix}
 
 \subsection{Some commutation relations}
 Recall the formula 
   \begin{equation}\label{ralpha}
 R_{\alpha}u(z) =  u(z+\alpha) e^{\frac{1}{2}(\overline z \alpha - z \overline{\alpha})}.
    \end{equation} 
The infinitesimal generator of this transformation   is obtained thanks to a first order Taylor expansion, and we get, for all $\alpha \in \C$
    \begin{equation}\label{rgamma}
    R_{\alpha}u=e^{-i(\alpha \cdot \Gamma)}u,
       \end{equation}  
 with $\alpha=\alpha_1+i\alpha_2$ and $\alpha \cdot \Gamma := \alpha_1 \Gamma_1 +\alpha_2 \Gamma_2$, where $\Gamma_1$ and $\Gamma_2$ are defined by
  \begin{equation*} 
\Gamma_1=i(-z +\partial_z+\frac{\ov z}2), \qquad \Gamma_2=-(z +\partial_z+\frac{\ov z}2).
\end{equation*}
Notice  that in general  $R_\alpha$ (respectively $\alpha \cdot \Gamma$) do not commute with $R_\beta$  (respectively  $\beta \cdot \Gamma$), see~\eqref{r-00} and~\eqref{r-5} below. \medskip

Similarly, for all $\theta \in \R$
 \begin{equation}\label{propa-L}
 L_{\theta}= e^{i \theta (\frac{H}2-1)}. 
   \end{equation}
We have the following commutation results
 \begin{lem}
Let $\alpha, \beta \in \C$ and $\theta \in \R$, then the   following commutation relations hold true
\begin{eqnarray}
  R_{\alpha}  R_\beta &=& e^{\frac12( {\ov \alpha} \beta-\alpha \ov{\beta} )} R_{\alpha+\beta}    \label{r-00} \\
R_{-\beta}  R_{\alpha}  R_\beta &=& e^{ {\ov \alpha} \beta-\alpha \ov{\beta} } R_{\alpha}    \label{r-5} \\
R_{-\beta} ( \alpha \cdot \Gamma ) R_\beta &=&( \alpha \cdot \Gamma )     +2 \Im(\alpha \ov{\beta})  \label{r-1} \\[5pt]
L_{-\theta} R_{\alpha}L_{\theta}&=&R_{\alpha e^{i\theta}}  \label{r-3} \\
L_{-\theta}   ( \alpha \cdot \Gamma ) L_\theta &= & ( \alpha e^{i\theta}) \cdot \Gamma      \label{r-2} \\[5pt]
R_{-\beta} H R_{\beta} & =&  H +2   \big[   (i \beta) \cdot \Gamma \big]+2 |\beta|^2 \label{r-4} \\
L_{-\theta} H L_{\theta} & =&  H. \label{r-6} 
   \end{eqnarray}
 \end{lem}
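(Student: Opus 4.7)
The strategy is to prove the two ``base'' identities \eqref{r-00} and \eqref{r-3} from the definition \eqref{ralpha} of $R_\alpha$ by direct substitution, and then to deduce the Lie-algebra–level statements \eqref{r-1}, \eqref{r-2}, \eqref{r-4}, \eqref{r-6} either by differentiation, using \eqref{rgamma} and \eqref{propa-L}, or by computing how elementary operators ($\partial_z$, $\partial_{\bar z}$, $z$, $\bar z$) transform under conjugation by $R_\beta$ and $L_\theta$.

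First, for \eqref{r-00}, I would just plug in: applying $R_\alpha$ to $R_\beta u$ yields $u(z+\alpha+\beta)$ times a Gaussian phase which, after collecting the cross terms $\bar z \alpha$, $\bar z \beta$, $\bar\alpha\beta$, etc., factors as $R_{\alpha+\beta}u(z)$ times the scalar $e^{\frac12(\bar\alpha\beta-\alpha\bar\beta)}$. Identity \eqref{r-5} is then obtained by applying \eqref{r-00} twice to $R_{-\beta}(R_\alpha R_\beta)$ and simplifying; the two Gaussian phases combine into $e^{\bar\alpha\beta-\alpha\bar\beta}$. To deduce \eqref{r-1}, I would replace $\alpha$ by $t\alpha$ in \eqref{r-5}, use $R_{t\alpha}=e^{-it(\alpha\cdot\Gamma)}$, write the scalar factor as $e^{-2it\,\Im(\alpha\bar\beta)}$, and differentiate at $t=0$.

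For the rotation block, \eqref{r-3} is again a pure substitution: $L_\theta$ replaces the argument $z$ by $e^{i\theta}z$ inside \eqref{ralpha}, and after simplification the translation parameter becomes $e^{i\theta}\alpha$ while the Gaussian phase is preserved. Then \eqref{r-2} follows by the same $t$-derivative trick applied to \eqref{r-3}, using \eqref{rgamma}. For \eqref{r-6}, I would argue either directly (the harmonic oscillator $H=-4\partial_z\partial_{\bar z}+|z|^2$ is rotation invariant on $\mathcal S'(\C)$) or spectrally: \eqref{propa-L} gives $L_\theta=e^{i\theta(H/2-1)}$, which manifestly commutes with $H$.

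The main bookkeeping step is \eqref{r-4}. Here I would first establish the elementary transformation rules
\[
R_{-\beta}\partial_z R_\beta=\partial_z-\tfrac{\bar\beta}{2},\quad R_{-\beta}\partial_{\bar z} R_\beta=\partial_{\bar z}+\tfrac{\beta}{2},\quad R_{-\beta} z R_\beta=z-\beta,\quad R_{-\beta}\bar z R_\beta=\bar z-\bar\beta,
\]
each of which falls out of a one-line computation on $R_\beta u(z)=u(z+\beta)e^{\frac12(\bar z\beta-z\bar\beta)}$. Plugging these into $H=-4\partial_z\partial_{\bar z}+|z|^2$, expanding and collecting yields
\[
R_{-\beta}HR_\beta=H-2\beta\partial_z+2\bar\beta\partial_{\bar z}-\bar\beta z-\beta\bar z+2|\beta|^2.
\]
To match the claimed form $H+2[(i\beta)\cdot\Gamma]+2|\beta|^2$, I would compute $(i\beta)\cdot\Gamma$ from $\Gamma_1,\Gamma_2$, obtaining $-\bar\beta z-\beta\partial_z-\tfrac12\beta\bar z$; the identity then holds on $\wt{\mathcal E}$ after using the defining relation $\partial_{\bar z}u=-\tfrac{z}{2}u$ to rewrite the term $2\bar\beta\partial_{\bar z}$ as $-\bar\beta z$. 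The main obstacle is precisely this last step: keeping careful track of the two different representations (with or without using $u\in\wt{\mathcal E}$), since outside $\wt{\mathcal E}$ the formula \eqref{r-4} holds only up to the correction $2\bar\beta(\partial_{\bar z}+\tfrac{z}{2})$, which vanishes on $\wt{\mathcal E}$.
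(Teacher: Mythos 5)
Your proposal is correct, and for all items except \eqref{r-4} it follows essentially the same route as the paper: \eqref{r-00}, \eqref{r-5}, \eqref{r-3} by direct substitution into \eqref{ralpha}, and \eqref{r-1}, \eqref{r-2} by the $t$-derivative trick applied to the group-level identities (this is exactly the paper's argument, e.g.\ writing $R_{-\beta}\frac{R_{t\alpha}-1}{t}R_\beta$ and letting $t\to 0$). The one genuine divergence is \eqref{r-4}: the paper first combines \eqref{r-3} and \eqref{r-00} into the group identity $R_{-\beta}L_\theta R_\beta = L_\theta R_{\beta(1-e^{i\theta})}e^{i|\beta|^2\sin\theta}$ and then differentiates in $\theta$ at $\theta=0$, using $L_\theta=e^{i\theta(H/2-1)}$, whereas you conjugate the elementary operators $\partial_z,\partial_{\ov z},z,\ov z$ by $R_\beta$ and expand $H=-4\partial_z\partial_{\ov z}+|z|^2$ directly. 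Your computation is right (I checked: the conjugation rules, the expansion $H-2\beta\partial_z+2\ov\beta\partial_{\ov z}-\ov\beta z-\beta\ov z+2|\beta|^2$, and the identification $(i\beta)\cdot\Gamma=-\ov\beta z-\beta\partial_z-\tfrac12\beta\ov z$ all hold), and it has the merit of making explicit that \eqref{r-4} is an identity of operators on $\wt{\E}$ only, with the precise correction $2\ov\beta(\partial_{\ov z}+\tfrac z2)$ off $\wt{\E}$ --- a caveat that is equally present but hidden in the paper's derivation, since the generator formula $R_\gamma=e^{-i(\gamma\cdot\Gamma)}$ itself is only valid on $\wt{\E}$. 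The paper's route is shorter and stays entirely at the level of the group identities; yours is more elementary and self-auditing.
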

 
 \begin{proof}
$\bullet$ The formulas~\eqref{r-00} and~\eqref{r-5} are direct consequences of \eqref{ralpha}. From~\eqref{r-5} we deduce that  for all $t>0$ 
$$R_{-\beta}  \frac{R_{t \alpha} -1}{t} R_\beta =\frac{1}{t} \big(  e^{ t({\ov \alpha} \beta-\alpha \ov{\beta} )} R_{t\alpha}-1\big)= \frac{ e^{ t({\ov \alpha} \beta-\alpha \ov{\beta} )}-1}{t}R_{t \alpha}+  \frac{R_{t \alpha} -1}{t},$$
and \eqref{r-1} follows by letting $t \longrightarrow 0$.

$\bullet$ Similarly, we show \eqref{r-3} by a direct computation and \eqref{r-2} by a derivation argument in $\alpha$.

$\bullet$  From \eqref{r-3} we obtain the identity 
$$R_{-\beta}L_{\theta} R_{\beta}=L_{\theta}  R_{-\beta e^{i\theta}}R_{\beta}= L_{\theta}   R_{ \beta(1-e^{i \theta})}e^{i |\beta|^2 \sin(\theta)},$$
then \eqref{r-4} is obtained by   derivation   in $\theta$. From~\eqref{propa-L} we deduce that $H$ and~$L$ commute, hence~\eqref{r-6}.
    \end{proof}

 \subsection{Smoothing effects in $\widetilde{\mathcal{E}}$} We now state two results which rely on the particular structure of~$\widetilde{\mathcal{E}}$. \medskip

 The first  result concerns  the hypercontractivity estimates. We refer to Carlen~\cite{Carlen} for the bounds with the optimal constants.

\begin{lem}\label{lem.hyp}
Assume that $u \in \widetilde{\mathcal{E}}$, then for all $1 \leq p \leq q \leq +\infty$  there exists $C>0$ such that 
$$\| u \|_{L^q(\C)} \leq C\|u \|_{L^p(\C)}.$$
\end{lem}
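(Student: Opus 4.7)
The plan is to decompose the proof into two steps: first establish the endpoint bound $\|u\|_{L^\infty(\C)} \leq C_p \|u\|_{L^p(\C)}$ for every $1\leq p<\infty$, and then deduce the full range $p\leq q\leq +\infty$ by elementary interpolation. The interpolation step is direct: for $p\leq q<\infty$,
\begin{equation*}
\|u\|_{L^q(\C)}^q = \int_\C |u|^{q-p} |u|^p \, dL \leq \|u\|_{L^\infty(\C)}^{q-p} \|u\|_{L^p(\C)}^p,
\end{equation*}
so $\|u\|_{L^q(\C)} \leq \|u\|_{L^\infty(\C)}^{1-p/q} \|u\|_{L^p(\C)}^{p/q}$, and plugging the endpoint bound into this inequality yields the claim.

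For the endpoint bound, I would first exploit magnetic translation invariance on $\widetilde{\mathcal{E}}$. From the formula \eqref{defR}, the phase factor $e^{\frac{1}{2}(\overline w\alpha - w\overline\alpha)}$ has modulus one (its exponent being purely imaginary), so $|R_\alpha u(w)| = |u(w+\alpha)|$, and by translation invariance of Lebesgue measure $\|R_\alpha u\|_{L^p(\C)} = \|u\|_{L^p(\C)}$. Moreover $R_z u(0) = u(z)$, hence $\|u\|_{L^\infty(\C)} = \sup_{z\in\C} |R_z u(0)|$. It is therefore enough to prove the pointwise bound $|v(0)|\leq C_p \|v\|_{L^p(\C)}$ for every $v\in\widetilde{\mathcal{E}}$, and to apply it to $v = R_z u$ for each $z\in\C$.

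To prove this pointwise bound, I would invoke subharmonicity. Writing $v(w) = f(w) e^{-|w|^2/2}$ with $f$ entire, the function $|f|^p$ is subharmonic for every $p>0$ (as the composition of the convex increasing function $t\mapsto t^p$ with the subharmonic function $\log|f|$, exponentiated), so the mean value inequality on the unit disk gives
\begin{equation*}
|f(0)|^p \leq \frac{1}{\pi} \int_{D(0,1)} |f(w)|^p\, dL(w) = \frac{1}{\pi} \int_{D(0,1)} |v(w)|^p e^{p|w|^2/2}\, dL(w) \leq \frac{e^{p/2}}{\pi}\|v\|_{L^p(\C)}^p.
\end{equation*}
Taking $p$-th roots yields $|v(0)| = |f(0)| \leq e^{1/2}\pi^{-1/p}\|v\|_{L^p(\C)}$, and combined with the invariance step above this gives $\|u\|_{L^\infty(\C)} \leq e^{1/2}\pi^{-1/p}\|u\|_{L^p(\C)}$, completing the proof. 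There is no real obstacle beyond identifying the correct structural ingredient, namely subharmonicity of $|f|^p$ for entire $f$; the constants produced this way are not optimal, while Carlen's sharp version in \eqref{hypercontract} relies on a more delicate logarithmic Sobolev argument.
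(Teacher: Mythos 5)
Your proof is correct, but it takes a genuinely different route from the paper's. The paper's argument starts from the reproducing identity $u=\Pi u$ and the explicit kernel \eqref{defpi}, which give the pointwise domination $|u(z)|\leq (\psi\star|u|)(z)$ with $\psi(z)=\frac1\pi e^{-|z|^2/2}$; a single application of Young's convolution inequality then yields all exponents $1\leq p\leq q\leq+\infty$ at once. You instead prove only the endpoint bound $\|u\|_{L^\infty(\C)}\leq C_p\|u\|_{L^p(\C)}$, by combining the subharmonicity of $|f|^p$ (mean value inequality on the unit disk) with the facts that the magnetic translations $R_\alpha$ are isometries of every $L^p$ and that $R_zu(0)=u(z)$, and you then recover the full range from the trivial interpolation $\|u\|_{L^q}\leq\|u\|_{L^\infty}^{1-p/q}\|u\|_{L^p}^{p/q}$. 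Both arguments are elementary and neither produces the sharp constants of \eqref{hypercontract}. Your route has the advantage of using only the structure $u=fe^{-|z|^2/2}$ with $f$ entire, with no reference to the projection $\Pi$ or its kernel; the paper's route is shorter once the kernel is available, and the convolution bound it produces is reused elsewhere in the paper. The one point you pass over silently is that $v=R_zu$ again belongs to $\widetilde{\mathcal{E}}$, which is needed to apply your pointwise bound at the origin to $v$; this is true and routine --- from \eqref{defR} one computes $R_zu=\tilde f\,e^{-|w|^2/2}$ with $\tilde f(w)=f(w+z)\,e^{-w\ov z-|z|^2/2}$ entire --- but it deserves a sentence.
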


\begin{proof}
For $u \in \widetilde{\mathcal{E}}$  we have $u=\Pi u$, namely
$$u(z)=\frac{e^{-\frac{|z|^2}2}}\pi  \int_\mathbb{C} e^{\ov  w z - \frac{|w|^2}{2}} u(w) \,dL(w),$$
and therefore, using that   $| e^{ - \frac{|z|^2}{2}+\ov  w z - \frac{|w|^2}{2}} |=   e^{-\frac{|z-w|^2}2}$, we get
\begin{equation*}
|u(z)| \leq \frac{1}{\pi}   \int_\mathbb{C} e^{-\frac{|z-w|^2}2} |u(w)| \,dL(w)=\big(\psi \star |u|\big)(z),
  \end{equation*}
where $\psi(z)= \frac{1}\pi e^{-|z|^2/2}$. Next, for all $1 \leq p \leq q \leq +\infty$, there exists $r \in [1,+\infty]$ such that $\frac1q+1=\frac1r+\frac1p$, and by the  Young inequality 
$$\| u \|_{L^q(\C)} \leq \|\psi \|_{L^r(\C)}     \|u \|_{L^p(\C)}\leq C\|u \|_{L^p(\C)},$$
which was the claim.
\end{proof}

The second result shows that derivatives of terms of the form $u \ov{v}$ with $u,v \in \wt\E$, can be controlled in $L^p$ spaces, without using Sobolev embeddings.

\begin{lem}\label{lem.deri}
For all $n,m, j, k \geq 0$ there exists $C=C(n,m,j,k)>0$ such that for all  $1\leq p \leq \infty$ and $u,v \in \widetilde{\mathcal{E}}$, 
$$ \big\| z^{n+m}\partial^j_{\ov z}\partial^k_z\big(u\ov{v}\big)\big\|_{L^p(\C)} \leq C\Big(  \| u\|_{L^{2p}(\C)}+ \| z^nu\|_{L^{2p}(\C)} \Big) \Big(    \| v\|_{L^{2p}(\C)}+ \|z^m v\|_{L^{2p}(\C)}\Big).$$
In particular, for all $v \in L^{2p} \cap \wt{\E},$
 \begin{equation}\label{smooth-LP}
  \big\| \partial^j_{\ov z}\partial^k_z\big(|v|^2\big)\big\|_{L^p(\C)} \leq C   \| v\|^2_{L^{2p}(\C)}.
   \end{equation}
\end{lem}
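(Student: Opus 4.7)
The plan is to use the Bargmann reproducing formula to express $u\bar v$ as a double integral, differentiate under the integral sign, and then reduce the estimate to convolution inequalities via H\"older and Young. Throughout I may assume $u,v\in\mathcal{E}$ (the general case follows from the finiteness of the right-hand side, which by hypercontractivity and a density/approximation argument reduces matters to this case).

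\textbf{Step 1 (Integral representation and derivatives).} Writing $u=\Pi u$ and $\bar v=\overline{\Pi v}$ via~\eqref{defpi},
\[
(u\bar v)(z)=\frac{1}{\pi^{2}}\iint e^{E(z,w_1,w_2)}\,u(w_1)\overline{v(w_2)}\,dL(w_1)dL(w_2),
\]
with $E=\bar w_1 z+w_2\bar z-(|w_1|^{2}+|w_2|^{2})/2-|z|^{2}$. Completing the square gives $E=-(\bar z-\bar w_1)(z-w_2)+\bar w_1 w_2-(|w_1|^{2}+|w_2|^{2})/2$; setting $A=\bar z-\bar w_1$, $B=z-w_2$, so that $\partial_z A=\partial_{\bar z}B=0$, Leibniz's rule yields
\[
\partial_{z}^{k}\partial_{\bar z}^{j}e^{E}=\sum_{\ell=0}^{\min(j,k)}c_{k,j,\ell}\,B^{j-\ell}A^{k-\ell}e^{E}.
\]
Using the identity $|e^{\bar w z-|w|^{2}/2-|z|^{2}/2}|=e^{-|z-w|^{2}/2}$ and Fubini, differentiation under the double integral gives
\[
\bigl|\partial_{z}^{k}\partial_{\bar z}^{j}(u\bar v)(z)\bigr|\le C\sum_{\ell=0}^{\min(j,k)}(\Phi^{k-\ell}\ast|u|)(z)\,(\Phi^{j-\ell}\ast|v|)(z),
\]
where $\Phi^{N}(t):=|t|^{N}e^{-|t|^{2}/2}$ belongs to every $L^{r}(\mathbb{C})$ with $r\in[1,\infty]$.

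\textbf{Step 2 (Distributing the weight $z^{n+m}$).} The elementary pointwise bound
\[
|z|^{n+m}\le C(|z-w_1|^{n}+|w_1|^{n})(|z-w_2|^{m}+|w_2|^{m}),
\]
inserted inside each convolution integral, dominates $|z|^{n+m}(\Phi^{k-\ell}\ast|u|)(\Phi^{j-\ell}\ast|v|)$ by a sum of four terms of the form $(\Phi^{A}\ast F)(z)(\Phi^{B}\ast G)(z)$, where $F\in\{|u|,\,|z|^{n}|u|\}$, $G\in\{|v|,\,|z|^{m}|v|\}$, and $A,B$ are new nonnegative integers.

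\textbf{Step 3 (Conclusion).} Taking the $L^{p}$ norm, H\"older's inequality (with conjugate exponents $2,2$ applied to $|\,\cdot\,|^{p}$) gives
\[
\|(\Phi^{A}\ast F)(\Phi^{B}\ast G)\|_{L^{p}}\le\|\Phi^{A}\ast F\|_{L^{2p}}\|\Phi^{B}\ast G\|_{L^{2p}},
\]
and Young's inequality yields $\|\Phi^{A}\ast F\|_{L^{2p}}\le\|\Phi^{A}\|_{L^{1}}\|F\|_{L^{2p}}$ since $\Phi^{A}\in L^{1}(\mathbb{C})$. Summing the four contributions produces
\[
\|z^{n+m}\partial_{\bar z}^{j}\partial_{z}^{k}(u\bar v)\|_{L^{p}}\le C\bigl(\|u\|_{L^{2p}}+\|z^{n}u\|_{L^{2p}}\bigr)\bigl(\|v\|_{L^{2p}}+\|z^{m}v\|_{L^{2p}}\bigr),
\]
and~\eqref{smooth-LP} follows by specializing to $u=v$, $n=m=0$.

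The main obstacle is the computation in Step~1: because of the cross-term $\partial_{z}\partial_{\bar z}E=-1$, iterating the derivative on $e^{E}$ produces a sum over $\ell$ rather than a single term, and identifying the precise prefactor $(z-w_2)^{j-\ell}(\bar z-\bar w_1)^{k-\ell}$ is essential, since it matches the polynomial growth with the decay of the Gaussians $e^{-|z-w_i|^{2}/2}$ and thereby produces $L^{1}$--convolution kernels that can be absorbed into the constant. Once this structure is in place, Steps~2 and~3 are a routine application of standard inequalities.
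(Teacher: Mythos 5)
Your proof is correct and follows essentially the same route as the paper: both use the reproducing formula $u=\Pi u$ to represent $u\ov v$ via Gaussian kernels, observe that the mixed derivatives $\partial_{\ov z}^j\partial_z^k$ produce a Leibniz-type sum over $\ell\le\min(j,k)$ of polynomial factors $(\ov z-\ov w_1)^{k-\ell}(z-w_2)^{j-\ell}$ absorbed by the Gaussian decay $e^{-|z-w_i|^2/2}$, distribute the weight via $|z|^{n}\leq C(|z-w|^{n}+|w|^{n})$, and conclude by Cauchy--Schwarz and Young. The only cosmetic difference is that you work with the double-integral representation from the start, whereas the paper keeps one factor as $\ov{g}(z)e^{-|z|^2/2}$ and splits the product into two single convolutions $F\cdot G_k$ before applying the Leibniz rule; the estimates obtained are identical.
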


Notice that the previous result together with Lemma~\ref{lem.hyp} implies that for all $p\geq 1$ and $k \geq 0$
 \begin{equation}\label{laplace}
\big\| (-\Delta)^k( |v|^2) \big\|_{L^p(\C)} \leq C   \| v\|^2_{L^{2}(\C)}, \quad \forall \, v \in \E.
 \end{equation}
\begin{proof}
Let us write $u(z)=f(z)e^{-\frac{|z|^2}2}$ and $v(z)=g(z)e^{-\frac{|z|^2}2}$, then since
$$u(z)=\frac{e^{-\frac{|z|^2}2}}\pi  \int_\mathbb{C} e^{\ov  w z - \frac{|w|^2}{2}} u(w) \,dL(w),$$
we have 
$$u(z)\ov{v(z)}=\frac{e^{-|z|^2}}\pi\ov{g(z)} \int_\mathbb{C} e^{\ov  w z - \frac{|w|^2}{2}} u(w) \,dL(w),$$
and by differentiating in $z$, we deduce the formula
\begin{eqnarray*}
\partial^k_z\Big(u(z)\ov{v(z)}\Big)&=&\frac{1}\pi\ov{g(z)} \int_\mathbb{C} (\ov{w}-\ov{z})^ke^{-|z|^2 +\ov  w z - \frac{|w|^2}{2}} u(w) \,dL(w)\\
&=&\Big( \frac{1}{\pi}\int_\mathbb{C} e^{-|z|^2 +  \xi \ov{z} - \frac{|\xi|^2}{2}} \ov{v(\xi)} \,dL(\xi) \Big)\Big(\frac{1}{\pi} \int_\mathbb{C} (\ov{w}-\ov{z})^ke^{\ov  w z - \frac{|w|^2}{2}} u(w) \,dL(w)\Big)\\
&:=& F(z,\ov{z}) G_k(z,\ov{z}).
\end{eqnarray*}
By the Leibniz rule, 
\begin{equation*}
\partial^j_{\ov z}\partial^k_z\Big(u(z)\ov{v(z)}\Big)= \sum_{\ell =0}^{j}  {{j}\choose{\ell }} \partial^{\ell}_{\ov z} F(z,\ov{z}) \partial^{j-\ell}_{\ov z} G_k(z,\ov{z}).
\end{equation*} 

$\bullet$ We have
$$\partial^{\ell}_{\ov z} F(z,\ov{z})  = \frac{1}{\pi}\int_\mathbb{C} ({\xi}-{z})^{\ell} e^{-|z|^2 +  \xi \ov{z} - \frac{|\xi|^2}{2}} \ov{v(\xi)} \,dL(\xi).    $$
Now observe that 
 \begin{equation}\label{module}
  |e^{  \xi \ov z - \frac{|\xi|^2}{2}} |= e^{  \frac{\ov  \xi z+ \xi \ov{z}}2 - \frac{|\xi|^2}{2}} = e^{\frac{|z|^2}2}   e^{-\frac{|z-\xi|^2}2},
   \end{equation}
then for $0 \leq \ell \leq j$
\begin{eqnarray*}
  \Big|\partial^{\ell}_{\ov z} F(z,\ov{z})   \Big| &\leq &\frac{1}{\pi} e^{-\frac{|z|^2}2}  \int_\mathbb{C} |z-\xi|^{\ell}e^{-\frac{|z-\xi|^2}2} |v(\xi)| \,dL(\xi) \nonumber\\
  &\leq &\frac{1}{\pi} e^{-\frac{|z|^2}2}  \int_\mathbb{C} \<z-\xi\>^{j}e^{-\frac{|z-\xi|^2}2} |v(\xi)| \,dL(\xi). 
  \end{eqnarray*}
Then, by the inequality $|z|^m \leq C\big(|z-\xi|^m+|\xi|^m\big)$
 \begin{multline}
|z|^m  \big|\partial^{\ell}_{\ov z} F(z,\ov{z})   \big|  \leq \\
\begin{aligned}
&\leq C  e^{-\frac{|z|^2}2}  \int_\mathbb{C} \<z-\xi\>^{j+m}e^{-\frac{|z-\xi|^2}2} |v(\xi)| \,dL(\xi)+ C  e^{-\frac{|z|^2}2}  \int_\mathbb{C} \<z-\xi\>^{j}e^{-\frac{|z-\xi|^2}2} |\xi^m v(\xi)| \,dL(\xi) \\
 &\leq C  e^{-\frac{|z|^2}2}  \int_\mathbb{C} \<z-\xi\>^{j+m}e^{-\frac{|z-\xi|^2}2} \big( |v(\xi)| +  |\xi^m v(\xi)|\big)  \,dL(\xi).\label{b11}
 \end{aligned}
  \end{multline}

 $\bullet$ For  $0 \leq \ell \leq \min{(j,k)}$ 
$$\partial^{\ell}_{\ov z} G_k(z,\ov{z})  =\frac{(-1)^\ell k !}{\pi (k-\ell)!} \int_\mathbb{C} (\ov{w}-\ov{z})^{k-\ell}e^{\ov  w z - \frac{|w|^2}{2}} u(w) \,dL(w),   $$
and from \eqref{module} we deduce that
\begin{equation*} 
  \Big|\partial^{\ell}_{\ov z} G_k(z,\ov{z})   \Big| \leq Ce^{\frac{|z|^2}2}  \int_\mathbb{C} \<z-w\>^{k}e^{-\frac{|z-w|^2}2} |u(w)| \,dL(w),
  \end{equation*}
  and therefore as in \eqref{b11}
 \begin{equation}\label{b12}
  |z|^n\Big|\partial^{\ell}_{\ov z} G_k(z,\ov{z})   \Big|   \leq Ce^{\frac{|z|^2}2}  \int_\mathbb{C} \<z-w\>^{k+n}e^{-\frac{|z-w|^2}2} \big( |u(w)| +|w^nu(w)|\big)\,dL(w).
  \end{equation}
  
  As a consequence, by \eqref{b11} and \eqref{b12}, 
$$\Big|z^{n+m}\partial^j_{\ov z}\partial^k_z\Big(u(z)\ov{v(z)}\Big)\Big| \leq C(\psi \star \tilde{v})(z)(\psi \star \tilde{u})(z),$$  
  where we have set $\tilde{u}(z)=|u(z)|+|z^nu(z)|$,  $\tilde{v}(z)=|v(z)|+|z^mv(z)|$ and  $\psi(z)=  \<z\>^{\max{(j+m,k+n)}}e^{-\frac{|z|^2}2} \in L^1(\C)$. Finally, we apply the Young inequality, and get
\begin{eqnarray*}
 \big\|z^{n+m} \partial^j_{\ov z}\partial^k_z\Big(u(z)\ov{v(z)}\Big)\big\|_{L^p(\C)} &\leq& C  \big\|  \psi \star \tilde{v}\big\|_{L^{2p}(\C)}  \big\|  \psi \star\tilde{u} \big\|_{L^{2p}(\C)} \\
 & \leq& C \big\|  \psi\big\|^2_{L^{1}(\C)} \big\| \tilde{v}\big\|_{L^{2p}(\C)}\big\| \tilde{u}\big\|_{L^{2p}(\C)}\\
  & \leq& C   \big\|  \tilde{v}\big\|_{L^{2p}(\C)}\big\| \tilde{u}\big\|_{L^{2p}(\C)},
 \end{eqnarray*}
which was to prove.
\end{proof}

By \cite[Theorem 1]{Carlen} we have the following striking identity: for all $u \in \wt{\mathcal{E}}$ and all $r>0$, 
  \begin{equation}\label{carlen2} 
  \int_{\C}  \Big|\partial_z(|u(z)|^r)\Big|^2 dL(z)= \frac{r}4 \int_{\C}   |u(z)|^{2r}  dL(z),
  \end{equation} 
  from which Carlen derives the hypercontractivity estimates \eqref{hypercontract}.  In the present work, we only need the particular case $r=2$ in \eqref{carlen2} for which we provide a simple proof.
  
\begin{lem} \label{lem.L4}
For all $u\in \E$,
  \begin{equation*} 
  \int_{\C}  \Big|\partial_z(|u(z)|^2)\Big|^2 dL(z)= \frac12 \int_{\C}   |u(z)|^4  dL(z).
  \end{equation*} 
   \end{lem}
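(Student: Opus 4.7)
\medskip

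\noindent\textbf{Proof plan.} Write $u(z)=f(z)e^{-|z|^2/2}$ with $f$ entire holomorphic, and set
$$I=\int_{\C}\bigl|\partial_z(|u|^2)\bigr|^2\,dL.$$
The plan is to compare $I$ with the double-derivative $\partial_z\partial_{\ov z}(|u|^2)$ via integration by parts, and to compute both sides explicitly using the Ansatz for $u$.

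First I will compute $\partial_z(|u|^2)$ directly. Since $\partial_z e^{-|z|^2}=-\ov z\,e^{-|z|^2}$, one obtains
$$\partial_z(|u|^2)=\partial_z\bigl(|f|^2e^{-|z|^2}\bigr)=(f'-\ov z f)\,\ov{f}\,e^{-|z|^2},$$
so that
$$\bigl|\partial_z(|u|^2)\bigr|^2=|f|^2\,|f'-\ov z f|^2\,e^{-2|z|^2}=|u|^2\,|f'-\ov z f|^2\,e^{-|z|^2}.$$

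Next, since $|u|^2$ is real, $\ov{\partial_z(|u|^2)}=\partial_{\ov z}(|u|^2)$, and by integration by parts (justified by the Gaussian decay of $|u|^2$ and its derivatives, e.g.\ via Lemma~\ref{lem.deri} applied with $p=2$),
$$I=\int_{\C}\partial_z(|u|^2)\,\partial_{\ov z}(|u|^2)\,dL=-\int_{\C}|u|^2\,\partial_z\partial_{\ov z}(|u|^2)\,dL.$$
Then I will compute $\partial_z\partial_{\ov z}(|u|^2)$ by differentiating once more. A direct calculation, expanding the product rule and using $\partial_{\ov z}e^{-|z|^2}=-z e^{-|z|^2}$, yields
$$\partial_z\partial_{\ov z}(|u|^2)=\bigl(|f'-\ov z f|^2-|f|^2\bigr)\,e^{-|z|^2}=|f'-\ov z f|^2\,e^{-|z|^2}-|u|^2,$$
where the key identity is the recognition that $|f'|^2-z f'\ov f-\ov z f\ov{f'}+|z|^2|f|^2=|f'-\ov z f|^2$.

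Substituting into the previous formula gives
$$I=-\int_{\C}|u|^2\bigl(|f'-\ov z f|^2\,e^{-|z|^2}-|u|^2\bigr)\,dL=-I+\int_{\C}|u|^4\,dL,$$
since the first term reproduces $I$ by the identity for $|\partial_z(|u|^2)|^2$ established above. Solving, $2I=\int_{\C}|u|^4\,dL$, which is the desired identity. The main point, and the only non-routine step, is the clean cancellation that produces exactly the quantity $|f'-\ov z f|^2e^{-|z|^2}$ both as $|\partial_z(|u|^2)|^2/|u|^2$ and (up to the subtraction of $|u|^2$) as $\partial_z\partial_{\ov z}(|u|^2)$; everything else is bookkeeping.
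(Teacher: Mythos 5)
Your proof is correct. I checked the two key computations: $\partial_z(|u|^2)=(f'-\ov z f)\,\ov f\,e^{-|z|^2}$ agrees with the paper's formula, and your second differentiation
$$\partial_z\partial_{\ov z}\bigl(|u|^2\bigr)=\bigl(-|f|^2+(f'-\ov z f)(\ov{f'}-z\ov f)\bigr)e^{-|z|^2}=|f'-\ov z f|^2e^{-|z|^2}-|u|^2$$
is right (using $\partial_{\ov z}f=0$, $\partial_{\ov z}\ov f=\ov{f'}$, $\partial_{\ov z}e^{-|z|^2}=-ze^{-|z|^2}$). The substitution then gives $I=-I+\int|u|^4$ without ever dividing by $|u|^2$, so zeros of $f$ cause no trouble, and the integration by parts is justified by the same $L^1$/decay considerations the paper itself leaves implicit.

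The route is a genuine reorganization of the paper's argument. The paper expands $|\partial_z(|u|^2)|^2$ into three terms ($|f'|^2|f|^2$, $|z|^2|f|^4$, and a cross term, all against $e^{-2|z|^2}$) and performs two separate integrations by parts to express $\int|z|^2|f|^4e^{-2|z|^2}$ back in terms of the other pieces; the cancellation happens at the level of the expanded integrals. You instead perform a single global integration by parts to reach $-\int|u|^2\,\partial_z\partial_{\ov z}(|u|^2)\,dL$ and then exploit the pointwise identity $\partial_z\partial_{\ov z}(|u|^2)+|u|^2=|f'-\ov z f|^2e^{-|z|^2}$, whose right-hand side is exactly $|\partial_z(|u|^2)|^2/(|f|^2e^{-2|z|^2})\cdot e^{-|z|^2}\cdot|f|^2e^{-|z|^2}$ reassembled, i.e.\ the integrand of $I$ again. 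This buys a cleaner "self-reproducing" structure ($I=-I+\int|u|^4$) and isolates the one non-routine fact as a pointwise statement about functions in $\wt\E$, at the cost of having to recognize the perfect square $|f'-\ov z f|^2$; the paper's version is more pedestrian but requires no such recognition. Both are equally rigorous.
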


\begin{proof}
  Write  $u= f e^{-\frac{|z|^2}2} \in \E$, then $|u(z)|^2=f(z)\ov{f(z)}e^{-|z|^2}$,
thus 
$$\partial_z\big(|u(z)|^2\big)=\big(\ov{f(z)}\partial_zf(z)-\ov{z}|f(z)|^2\big)e^{-|z|^2},$$
hence
\begin{eqnarray*} 
  \int_{\C}  \Big|\partial_z(|u(z)|^2)\Big|^2 dL(z)&=& \int_{\C}  |\partial_z f|^2|f|^2e^{-2|z|^2} + \int_{\C}  |z|^2|f|^4e^{-2|z|^2}-2\Re \int_{\C} (z \partial_z f)\ov{f} |f|^2e^{-2|z|^2} \\
&=& \int_{\C}  |\partial_z f|^2|f|^2e^{-2|z|^2} - \int_{\C}  |z|^2|f|^4e^{-2|z|^2}+  \int_{\C}    |f|^4e^{-2|z|^2},
\end{eqnarray*}
by integrating by parts.  Now compute 
\begin{eqnarray*} 
  \int_{\C}  |z|^2|f|^4e^{-2|z|^2}   &=&- \frac12   \int_{\C}\partial_z \big(  e^{-2|z|^2} \big) zf^2 \ov{f^2}\\
&=&\frac 12  \int_{\C}    |f|^4e^{-2|z|^2}+  \int_{\C} zf \partial_z f  \ov{f^2}e^{-2|z|^2} \\
&=&\frac 12  \int_{\C}    |f|^4e^{-2|z|^2}- \frac12   \int_{\C}\partial_{\ov z} \big(  e^{-2|z|^2} \big) f \partial_z f \ov{f^2}\\
&=& \int_{\C}  |\partial_z f|^2|f|^2e^{-2|z|^2} +\frac 12  \int_{\C}    |f|^4e^{-2|z|^2}, 
\end{eqnarray*}
hence the result.
\end{proof}


 \section{Bounds on the Sobolev norms for LLL}\label{AppendB}

We consider the  Lowest Landau Level equation 
 \begin{equation}\label{LLL-A} 
\left\{
\begin{aligned}
&i \partial_t u=\Pi(|u|^2u), \quad   (t,z)\in \R\times \C,\\
&u(0,\cdot)=  u_0 \in \E. 
\end{aligned}
\right.
\end{equation}
Using the results of Lemma~\ref{lem3.1} and Lemma~\ref{lem.deri}, we are able to improve the bounds obtained in~\cite[Theorem~1.2]{GGT}, from $\<t\>^{\frac{k-1}2}$ to $\<t\>^{\frac{k-1}4}$ :
 
\begin{thm}\label{bounds}
Let $k\geq 1$ be an integer and $u_0\in L^{2,k}_\mathcal{E}$. Then there exists a unique solution $u\in \mathcal{C}^{\infty}  \big(\R , L^{2,k}_\mathcal{E}\big)$ to  equation~\eqref{LLL-A} and it satisfies for all $t \in \R$   
 $$
   \begin{aligned}  
&\Vert \langle z\rangle ^ku(t)\Vert _{L^2(\C)} \leq  \big\| \<z\>^ku_0\big\|_{L^2(\C)} \big(1+C_k \|\<z\> u_0\|^2_{L^2(\C)}  |t| \big)^{\frac{k-1}4} &\quad\text{if} &\quad k\geq 3\\
&\Vert \langle z\rangle ^2u(t)\Vert _{L^2(\C)} \leq  \big\| \<z\>^2u_0\big\|_{L^2(\C)} \big(1+C \| u_0\|^2_{L^2(\C)}  |t| \big)^{\frac12} &\quad\text{if} &\quad k=2.
\end{aligned}
$$
 Moreover, if  $\langle z\rangle^3 u_0\in L^2(\C)$, then 
 \begin{equation}\label{z2}
 \Vert \langle z\rangle ^2u(t)\Vert _{L^2(\C)} \leq  \big\| \<z\>^3u_0\big\|_{L^2(\C)} \big(1+C \|\<z\> u_0\|^2_{L^2(\C)}  |t| \big)^{\frac{1}4}.
 \end{equation}
\end{thm}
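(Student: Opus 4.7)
The plan is to adapt directly the argument used in the proof of Theorem~\ref{borne-poly}, with the simplification that for the single LLL equation the full weighted norm $\|\langle z\rangle u(t)\|_{L^2(\C)}$ is conserved (since both $M$ and $P$ are conservation laws of \eqref{LLL-A}, the $L^{2,1}_{\E}$ norm is their sum). I would begin by applying Lemma~\ref{lem3.1} with $W=|u|^2$ to obtain
$$\frac{d}{dt}\int_{\C}|z|^{2k}|u|^2\,dL=-2\sum_{j=1}^{k}(-1)^j\binom{k}{j}\Im\int_{\C}z^k\bar z^{k-j}|u|^2\,\partial_z^j(|u|^2)\,dL,$$
and immediately observe, as in \eqref{ipp2}, that the $j=1$ contribution vanishes: one writes $|u|^2\partial_z(|u|^2)=\tfrac12\partial_z(|u|^4)$ and integrates by parts to obtain $-\tfrac{k}{2}\Im\!\int|z|^{2(k-1)}|u|^4\,dL=0$.

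For $k\geq 4$ and $j=2,3$, the key algebraic reduction is $|z|^2\partial_z^j(|u|^2)=\partial_z^j(|zu|^2)-j\bar z\,\partial_z^{j-1}(|u|^2)$, which rewrites the $j$-contribution as a ``good'' term involving $\partial_z^j(|zu|^2)$ plus a term of $(j-1)$-type at a lower weight; the $(j-1)$-type term either vanishes (when it reduces to the $j=1$ pattern) or is bounded as in the $j=2$ step. For the good terms and for the contributions $j\geq 4$, I would invoke Lemma~\ref{lem.deri} with $n=m=1$ together with Carlen's hypercontractivity (Lemma~\ref{lem.hyp}) to get
$$\|\partial_z^j(|zu|^2)\|_{L^\infty(\C)}\leq C\|\langle z\rangle u\|_{L^\infty}^2\leq C\|\langle z\rangle u_0\|_{L^2(\C)}^2,\qquad \|\partial_z^j(|u|^2)\|_{L^\infty(\C)}\leq C\|u_0\|_{L^2(\C)}^2,$$
both quantities being conserved. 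The remaining weighted moments $\int|z|^{2k-j}|u|^2\,dL$ (with $j\geq 2$) are interpolated via Hölder between $X_k^2=\int|z|^{2k}|u|^2\,dL$ and the conserved $X_1^2=\int|z|^{2}|u|^2\,dL$; the critical case is
$$\int_{\C}|z|^{2k-4}|u|^2\,dL\leq X_k^{2(k-3)/(k-1)}X_1^{4/(k-1)},$$
so that after collecting all contributions one gets $\tfrac{d}{dt}X_k^2\leq C\,X_1^{2+4/(k-1)}X_k^{2-4/(k-1)}$, which integrates to the claimed exponent $(k-1)/4$.

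The case $k=3$ is treated directly: $j=1$ vanishes, $j=2$ is handled as above, and for $j=3$ one writes the integrand as $z\cdot z^2\partial_z^3(|u|^2)\cdot|u|^2$ and uses Lemma~\ref{lem.deri} (with $n=m=1$, $k=3$) to obtain $\|z^2\partial_z^3(|u|^2)\|_{L^\infty}\leq C\|\langle z\rangle u_0\|_{L^2}^2$, then Cauchy--Schwarz gives $\int|z||u|^2\,dL\leq \|u_0\|_{L^2}\|\langle z\rangle u_0\|_{L^2}$; all resulting factors are conserved, so $\tfrac{d}{dt}X_3^2$ is bounded by a constant and $X_3(t)\lesssim\langle t\rangle^{1/2}$. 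The case $k=2$ is even simpler: only $j=1$ (vanishing) and $j=2$ appear, and $|\Im\!\int z^2|u|^2\partial_z^2(|u|^2)\,dL|\leq\|\partial_z^2(|u|^2)\|_{L^\infty}\int|z|^2|u|^2\,dL\lesssim\|u_0\|_{L^2}^2\|\langle z\rangle u_0\|_{L^2}^2$, yielding the $\langle t\rangle^{1/2}$ bound. Finally, estimate \eqref{z2} follows by interpolation: $X_2(t)\leq X_1(t)^{1/2}X_3(t)^{1/2}\leq \|\langle z\rangle u_0\|_{L^2}^{1/2}\|\langle z\rangle^3 u_0\|_{L^2}^{1/2}(1+C\|\langle z\rangle u_0\|_{L^2}^2|t|)^{1/4}$, which gives the statement.

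The main obstacle is the bookkeeping of the recursive identity $|z|^2\partial_z^j(|u|^2)=\partial_z^j(|zu|^2)-j\bar z\partial_z^{j-1}(|u|^2)$: one needs to verify that at each step the ``remainder'' term genuinely reduces to a contribution already controlled (and in particular that integration-by-parts cancellations of the $j=1$ pattern keep applying after the reduction), and then to check that the Hölder interpolation at the final step yields exactly the exponent $2-4/(k-1)$ on $X_k$ needed for Grönwall to produce the rate $(1+|t|)^{(k-1)/4}$ rather than a worse one.
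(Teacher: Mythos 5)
Your argument is correct, but note that the paper disposes of this appendix statement in a single sentence: if $u$ solves \eqref{LLL-A}, then the pair $(u,u)$ solves the coupled system \eqref{sys-sig} with $\sigma=1$, so Theorem~\ref{borne-poly} applies verbatim and yields every bound claimed here, including \eqref{z2}. What you do instead is re-run the proof of Theorem~\ref{borne-poly} directly on the scalar equation; since your computations are exactly the diagonal case $v=u$ of the paper's (the vanishing of the $j=1$ term via $|u|^2\partial_z(|u|^2)=\tfrac12\partial_z(|u|^4)$ is the specialization of \eqref{ipp2}; the identity $|z|^2\partial_z^j(|u|^2)=\partial_z^j(|zu|^2)-j\ov z\,\partial_z^{j-1}(|u|^2)$ is used there for $j=2,3$, with the remainder vanishing for $j=2$ and estimated crudely for $j=3$; and the H\"older interpolation of $\int\<z\>^{2k-4}|u|^2$ between $X_k^2$ and the conserved $X_1^2$ gives the exponent $2-\tfrac{4}{k-1}$ on $X_k^2$, which Gr\"onwall integrates to $(1+|t|)^{(k-1)/4}$), everything checks out, including the separate treatments of $k=2$, $k=3$ and the final interpolation $X_2\leq X_1^{1/2}X_3^{1/2}$ for \eqref{z2}. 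Two cosmetic points: hypercontractivity (Lemma~\ref{lem.hyp}) applies to $u$ and to $zu$ separately, each lying in $\wt{\E}$, rather than to $\<z\>u$ itself (write $\|\<z\>u\|_{L^\infty}\leq \|u\|_{L^\infty}+\|zu\|_{L^\infty}$ first); and for $j\geq 4$ the unweighted bound $\|\partial_z^j(|u|^2)\|_{L^\infty}\leq C\|u\|_{L^\infty}^2$ suffices since $|z^k\ov z^{k-j}|\leq\<z\>^{2k-4}$ already. In short: a correct direct re-derivation, where the $(u,u)$ reduction would have saved the entire computation.
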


This result is actually contained in Theorem~\ref{borne-poly} of the present paper: let  $u_0 \in L^{2,k}_\mathcal{E}$, and  $u$ be the solution to \eqref{LLL-A}, then the couple $(u,u)$ is a solution to \eqref{sys-sig} with $\sigma=1$, and  Theorem~\ref{borne-poly} can be applied. \medskip
 
 Observe that, for all $p \geq 1$ and $k \geq 0$,  by \eqref{laplace} we have 
 \begin{equation*} 
\big\| (-\Delta)^k\big( |u(t)|^2\big) \big\|_{L^p(\C)} \leq C   \| u_0\|^2_{L^{2}(\C)}, \quad  \forall\, u_0 \in \E,
 \end{equation*}
which shows that the oscillations of $|u|^2$ are bounded in $L^p$, however the terms  $\Vert \langle z\rangle ^ku(t)\Vert _{L^2(\C)}$ may grow.  \medskip
 
 We end this section with a result which shows in some sense  that the $L^{2,k}_\mathcal{E}$-norm of a solution to~\eqref{LLL-A} may only grow slowly since it can be controlled by oscillations:
 \begin{prop} 
Let $k\geq 1$ be an integer and $u_0\in L^{2,k}_\mathcal{E}$. Then the solution $u$ to~\eqref{LLL-A} satisfies  for all $\alpha \in \R^*$  and all $t\in \R$
$$\Big| \int_{\C}  |z|^{2k}|u(t,z)|^2e^{i \alpha |z|^2}dL(z)\Big| \leq C_k (|\alpha|^{-2k}+|\alpha|^{-k})\|u_0\|^2_{L^2(\C)}.$$
\end{prop}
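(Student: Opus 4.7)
The key idea is to exploit the oscillation in $e^{i\alpha |z|^2}$ through two rounds of integration by parts, using the identities
\begin{equation*}
\partial_z(e^{i\alpha |z|^2}) = i\alpha\, \ov{z}\, e^{i\alpha |z|^2}, \qquad \partial_{\ov{z}}(e^{i\alpha |z|^2}) = i\alpha\, z\, e^{i\alpha |z|^2},
\end{equation*}
so that by iteration
\begin{equation*}
\ov{z}^k e^{i\alpha |z|^2} = \frac{1}{(i\alpha)^k}\,\partial_z^k e^{i\alpha|z|^2}, \qquad z^j e^{i\alpha |z|^2} = \frac{1}{(i\alpha)^j}\,\partial_{\ov{z}}^j e^{i\alpha|z|^2}.
\end{equation*}
All boundary terms in the integrations by parts will vanish thanks to the Gaussian decay of $u(t)\in\E$.

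First I would rewrite the integral as
\begin{equation*}
I_k(t) := \int_{\C} z^k \ov{z}^k |u(t,z)|^2 e^{i\alpha|z|^2}\,dL = \frac{1}{(i\alpha)^k}\int_{\C} z^k |u|^2\,\partial_z^k e^{i\alpha|z|^2}\,dL,
\end{equation*}
and perform $k$ integrations by parts in $z$, then apply the Leibniz formula $\partial_z^{k-j}(z^k) = \frac{k!}{j!}z^j$ for $0\leq j\leq k$, to obtain
\begin{equation*}
I_k(t) = \frac{(-1)^k}{(i\alpha)^k}\sum_{j=0}^{k}\binom{k}{j}\frac{k!}{j!}\int_{\C} z^j\,\partial_z^j(|u|^2)\,e^{i\alpha|z|^2}\,dL.
\end{equation*}

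The term $j=0$ is $\int |u|^2 e^{i\alpha|z|^2}\,dL$ and is bounded in absolute value by $\|u(t)\|_{L^2(\C)}^2 = \|u_0\|_{L^2(\C)}^2$ thanks to mass conservation, contributing the $|\alpha|^{-k}$ part of the bound. For each $j\geq 1$, I would perform a second round of $j$ integrations by parts, this time in $\ov{z}$, using $z^j e^{i\alpha|z|^2}=(i\alpha)^{-j}\partial_{\ov{z}}^j e^{i\alpha|z|^2}$, which gives
\begin{equation*}
\int_{\C} z^j\,\partial_z^j(|u|^2)\,e^{i\alpha|z|^2}\,dL = \frac{(-1)^j}{(i\alpha)^j}\int_{\C}\partial_{\ov{z}}^j\partial_z^j(|u|^2)\,e^{i\alpha|z|^2}\,dL.
\end{equation*}

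Now the crucial observation is that by Lemma~\ref{lem.deri} applied with $p=1$, since $u(t)\in\wt{\E}$,
\begin{equation*}
\big\|\partial_{\ov{z}}^j\partial_z^j(|u(t)|^2)\big\|_{L^1(\C)} \leq C_j\,\|u(t)\|_{L^2(\C)}^2 = C_j\,\|u_0\|_{L^2(\C)}^2,
\end{equation*}
again by mass conservation. Bounding $|e^{i\alpha|z|^2}|\leq 1$ then yields that the $j$-th term contributes at most $C\,|\alpha|^{-(k+j)}\|u_0\|_{L^2(\C)}^2$. Summing over $0\leq j\leq k$ and absorbing the intermediate powers into the two extremes gives
\begin{equation*}
|I_k(t)| \leq C_k\bigl(|\alpha|^{-k} + |\alpha|^{-2k}\bigr)\,\|u_0\|_{L^2(\C)}^2,
\end{equation*}
as claimed. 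No serious obstacle is expected: the only point of care is justifying that the boundary terms in both rounds of IBP vanish, which follows from $u(t)\in L^{2,k}_\E$ (so $u$ and all products $z^a\ov{z}^b |u|^2$ decay like $e^{-|z|^2}$ times a polynomial), and that the smoothing estimate \eqref{smooth-LP} is available in $L^1$.
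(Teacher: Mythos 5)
Your argument is correct and is essentially the paper's own proof: the same two rounds of integration by parts against $e^{i\alpha|z|^2}$ with the Leibniz formula in between (the paper merely works first in $\ov z$ and then in $z$, the mirror image of your order), followed by the same application of Lemma~\ref{lem.deri} with $p=1$ and mass conservation to bound $\big\|\partial_{\ov z}^j\partial_z^j(|u|^2)\big\|_{L^1(\C)}$ by $C_j\|u_0\|^2_{L^2(\C)}$. Nothing further is needed.
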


\begin{proof} We integrate by parts
\begin{eqnarray*}
\int_{\C}  |z|^{2k}|u|^2e^{i \alpha |z|^2}dL&=&(i \alpha)^{-k}\int_{\C}  {\ov z}^{k}|u|^2\partial^k_{\ov z}\big( e^{i \alpha |z|^2}\big)dL\\
&=&(-i \alpha)^{-k}\int_{\C} e^{i \alpha |z|^2}\partial^k_{\ov z}\big( {\ov z}^{k} |u|^2\big)dL\\
&=&(-i \alpha)^{-k}\sum_{j=0}^kC_{jk}\int_{\C} e^{i \alpha |z|^2}{\ov z}^{j}\partial^j_{\ov z}\big(  |u|^2\big)dL\\
&=&(-i \alpha)^{-k}\sum_{j=0}^kC_{jk}(i \alpha)^{-j}\int_{\C}\partial^j_{ z}\big( e^{i \alpha |z|^2}\big)\partial^j_{\ov z}\big(  |u|^2\big)dL\\
&=&(-i \alpha)^{-k}\sum_{j=0}^kC_{jk}(-i \alpha)^{-j}\int_{\C} e^{i \alpha |z|^2} \partial^j_{ z}\partial^j_{\ov z}\big( |u|^2\big)dL.
\end{eqnarray*}
By Lemma~\ref{lem.deri} we have
$$\big|\int_{\C} e^{i \alpha |z|^2} \partial^j_{ z}\partial^j_{\ov z}\big( |u|^2\big)dL\big| \leq  \big\| \partial^j_{ z}\partial^j_{\ov z}\big( |u|^2\big)\big\|_{L^1(\C)}\leq C\|u\|^2_{L^2(\C)}=  C\|u_0\|^2_{L^2(\C)},$$
hence the result.
\end{proof}

\end{document}